\title{On periodicity of geodesic continued fractions}
\author{Hohto Bekki
\footnote{
bekki@ms.u-tokyo.ac.jp}}
\date{
\small{\it Graduate School of Mathematical Sciences, The University of Tokyo, 3-8-1 Komaba, Meguro, Tokyo, 153-8914 Japan}}
\theoremstyle{plain}
\newtheorem{thm}{Theorem}[subsection]
\newtheorem{lem}[thm]{Lemma}
\newtheorem{claim}{Claim}
\newtheorem{prop}[thm]{Proposition}
\newtheorem{cor}[thm]{Corollary}
\newtheorem{theo}{Theorem}
\theoremstyle{definition}
\newtheorem{dfn}[thm]{Definition}
\newtheorem{ex}{Example}
\newtheorem{algo}{Algorithm}
\theoremstyle{remark}
\newtheorem{rmk}{Remark}
\newcommand{\Gal}{\mathop{\mathrm{Gal}}\nolimits}
\newcommand{\End}{\mathop{\mathrm{End}}\nolimits}
\def\hsymb#1{\mbox{\strut\rlap{\smash{\Large$#1$}}\quad}}
\begin{document}
\maketitle

\begin{abstract}
In this paper, we study geodesic generalizations of Lagrange's theorem in the classical theory of continued fractions. We first construct a geometric framework which generalizes the geodesic interpretation of Lagrange's theorem by Sarnak. Then we develop a theory of geodesic continued fractions originally studied by Lagarias and Beukers into a form which is applicable to our geometric framework. As a result, we establish algorithms to expand bases of number fields with rank one unit groups, and prove their periodicity. Furthermore, we show that their periods describe the unit groups of the number fields. Our arguments are ad\'elic so that we can also treat $p$-units. In this case, our algorithm is an refinement of Mahler's $p$-adic continued fractions. We also treat relative quadratic extensions of number fields with rank one relative unit groups.
~\\
{\footnotesize {\it Keywords:} Geodesic continued fractions; Lagrange's theorem; unit groups}
\end{abstract}

\tableofcontents
\section{Introduction}
	The classical continued fraction expansion of a real number $\alpha$ expresses $\alpha$ in the following form,
\begin{eqnarray*}
\alpha &=& a_0 + \cfrac{1}{a_1+ \cfrac{1}{\ddots+\cfrac{1}{a_n}}} = [a_0,a_1,\dots, a_n] , \ {\rm or} \\
  \alpha &=& a_0 + \cfrac{1}{a_1+ \cfrac{1}{a_2+\cfrac{1}{\ddots}}} = [a_0,a_1, a_2, \dots] \ \ (a_0 \in \mathbb{Z}, \ a_k \in \mathbb{N}_{>0}\ \text{for }k \geq 1).
\end{eqnarray*}
Then, $\dfrac{p_k}{q_k}= [a_0,a_1,\dots, a_k] $ for relatively prime integers $p_k \in \mathbb{Z}, q_k \in \mathbb{N}_{>0}$ ($k \in \mathbb{N}$) is called the $k$-th convergent of $\alpha$.
 
 Our motivation for this paper is to study generalizations of the following classical theorem.
\begin{theo}[Lagrange's theorem]\label{lag}~
\begin{enumerate}[{\rm (i)}]
\item The continued fraction expansion of a real number $\alpha$ is periodic if and only if $\alpha$ is a real quadratic irrational.
\item Let $\alpha$ be a real quadratic irrational and suppose its continued fraction expansion $\alpha =  [a_0,a_1, a_2, \dots] $ is purely periodic. That is, there exists $l \in \mathbb{N}_{>0}$ such that $a_{k+l} = a_k$ for all $k \in \mathbb{N}$. Let $l$ be the minimal among these. Let $\mathcal{O} = \{x \in \mathbb{Q}(\alpha)\ |\ x(\mathbb{Z}+\mathbb{Z}\alpha) \subset \mathbb{Z}+\mathbb{Z}\alpha\}$, the order associated to $\alpha$. Then $u = p_{l-2}\alpha + p_{l-1}$ is a fundamental unit of $\mathcal{O}$, i.e. a generator of $\mathcal{O}^{\times}$ modulo $\pm 1$, where, for $k \in \mathbb{N}$, $p_k/q_k$ is the $k$-th convergent of $\alpha$.
\end{enumerate}
\end{theo}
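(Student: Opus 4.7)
My plan is to treat (i) and (ii) separately, using throughout the matrix encoding $M_k := \begin{pmatrix} a_k & 1 \\ 1 & 0 \end{pmatrix}$, the product identity $M_0 \cdots M_k = \begin{pmatrix} p_k & p_{k-1} \\ q_k & q_{k-1} \end{pmatrix}$, and the M\"obius formula $[a_0,\dots,a_k,x] = (x p_k + p_{k-1})/(x q_k + q_{k-1})$.

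For the ``if'' direction of (i), any purely periodic tail $\beta = [\overline{a_n,\dots,a_{n+l-1}}]$ is a M\"obius fixed point of $N := M_n \cdots M_{n+l-1} \in \mathrm{GL}_2(\mathbb{Z})$, which unwinds to a nontrivial integer quadratic equation for $\beta$. Then $\alpha$, the M\"obius image of $\beta$ under $M_0 \cdots M_{n-1}$, lies in $\mathbb{Q}(\beta)$, a real quadratic field. For the converse, I write $\alpha_n = (P_n + \sqrt{D})/Q_n$ for the $n$-th complete quotient with $D$ the fixed discriminant, derive integer recursions for $P_n, Q_n$, and argue that from some $n_0$ on $\alpha_n$ is \emph{reduced}, meaning $\alpha_n > 1$ and $-1 < \overline{\alpha_n} < 0$. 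This follows by expressing $\overline{\alpha_n}$ as a M\"obius image of $\overline{\alpha}$ through the inverse of $M_0\cdots M_{n-1}$ and using $q_n \to \infty$. Reducedness bounds $|P_n|, |Q_n|$ uniformly in terms of $D$, so only finitely many $\alpha_n$ occur, hence $\alpha_n = \alpha_{n+l}$ for some $n, l$, and the tail is purely periodic.

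For (ii), with $\alpha$ purely periodic of period $l$, the matrix $M := M_0\cdots M_{l-1}$ fixes $\alpha$ under the M\"obius action. Applying $M$ to the column $(\alpha, 1)^T$ gives $M (\alpha,1)^T = u (\alpha, 1)^T$, where $u$ is the $\mathbb{Z}$-linear combination of $\alpha$ and $1$ appearing in the statement. Hence $u \cdot 1$ and $u \cdot \alpha$ both lie in $\mathbb{Z} + \mathbb{Z}\alpha$, so $u \in \mathcal{O}$, and $N(u) = \det M = \pm 1$ puts $u \in \mathcal{O}^{\times}$. For fundamentality, I would establish a bijection between $\mathcal{O}^{\times}$ and $\mathrm{Stab}_{\mathrm{GL}_2(\mathbb{Z})}(\alpha)$ sending $v \mapsto N_v$, where $N_v$ is the matrix of multiplication by $v$ in the basis $(\alpha, 1)$; this satisfies $N_v (\alpha, 1)^T = v (\alpha, 1)^T$. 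Under this identification, showing $u$ is fundamental amounts to showing $M$ generates the stabilizer modulo $\pm I$: any stabilizing matrix acts on the CF expansion of $\alpha$ by an integer shift (by conjugating the CF-generating dynamics), and by minimality of $l$ together with purity of the period, this shift must be a multiple of $l$, so $M$ itself is primitive.

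The main obstacle is this last combinatorial step: verifying that the stabilizer-to-shift correspondence is well-defined and that minimality of the period truly rules out $M$ being a proper power. This rests on the uniqueness of the CF expansion of a real irrational and is where the continued-fraction-specific structure enters essentially. The rest of the argument is standard linear algebra over $\mathbb{Z}$ together with the elementary facts about convergents encoded in the matrix identity above.
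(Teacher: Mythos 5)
The paper never proves Theorem \ref{lag}: it is quoted in the introduction as the classical result motivating the generalizations, and the paper's own machinery (compactness of $\mathbb{A}_F^1/F^{\times}$ for periodicity in Theorem \ref{perHeeg}, the period group $\Gamma_{\varpi_S}\simeq\mathcal{O}_{w,S}^1$ for the unit statement in Theorem \ref{arch-main}) proceeds by an entirely different, non-elementary route. So your proposal can only be judged on its own terms. Part (i) is the standard and correct Lagrange--Galois argument: eventual reducedness of the complete quotients, uniform bounds on $P_n,Q_n$ in terms of $D$, pigeonhole. No objection there.

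In part (ii) there are two problems. First, the eigenvalue identification is wrong as written: from $M=M_0\cdots M_{l-1}=\bigl(\begin{smallmatrix}p_{l-1}&p_{l-2}\\ q_{l-1}&q_{l-2}\end{smallmatrix}\bigr)$ and the fixed-point relation one gets $M\,{}^t\!(\alpha,1)=\lambda\,{}^t\!(\alpha,1)$ with $\lambda=q_{l-1}\alpha+q_{l-2}$, \emph{not} $p_{l-2}\alpha+p_{l-1}$. These do not coincide: for $\alpha=[\overline{2,1}]=1+\sqrt{3}$ one has $\lambda=\alpha+1=2+\sqrt3$ (the fundamental unit), while $p_{l-2}\alpha+p_{l-1}=2\alpha+3=5+2\sqrt3$ has norm $13$ and is not a unit at all; for $\alpha=[\overline{1}]$ one gets $\lambda=\alpha=(1+\sqrt5)/2$ versus $\alpha+1=\lambda^2$. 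So the formula in the statement needs correcting to $u=q_{l-1}\alpha+q_{l-2}$, and your "hence $u\in\mathcal{O}$" step silently asserts an identity that is false; you should have verified it. Second, the fundamentality argument --- the entire content of (ii) beyond linear algebra --- is only gestured at and you yourself flag it as the obstacle. The assertion that every element of $\mathrm{Stab}_{GL_2(\mathbb{Z})}(\alpha)$ "acts on the CF expansion by an integer shift" is not established; the standard way to close this is to show that any stabilizing $A$ with eigenvalue $>1$ has (after multiplying by $-I$) nonnegative entries, because $\alpha>1$ and $-1<\overline{\alpha}<0$ force sign constraints on the columns, and that any such matrix in the stabilizer factors uniquely as $M_0\cdots M_{ml-1}$ by uniqueness of the continued fraction expansion. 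Without that step the proof of (ii) is incomplete.
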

 
Our starting point is the following geometric interpretation of Lagrange's theorem showed by Sarnak~\cite{sarnak}.

\begin{theo}[Geodesic Lagrange's theorem  \cite{sarnak}]\label{sarnak-lag}~
\begin{enumerate}[{\rm (i)}]
\item A geodesic in the Poincar\'e upper-half plane $\mathfrak{h}$ connecting two real numbers $\alpha , \beta \in \mathbb{R} = \partial \mathfrak{h}$ is periodic with respect to the natural action of $SL_2(\mathbb{Z})$ if and only if $\alpha$ and $\beta$ are conjugate real quadratic irrationals. Here ``periodic'' means that the image of the geodesic under the natural projection to the modular curve $SL_2(\mathbb{Z})\backslash \mathfrak{h}$ defines a closed geodesic.
\item If this is the case, let $l \in \mathbb{R}_{>0}$ be the length of this closed geodesic. Let $\mathcal{O} = \{x \in \mathbb{Q}(\alpha)\ |\ x(\mathbb{Z}\alpha +\mathbb{Z}\beta) \subset \mathbb{Z}\alpha +\mathbb{Z}\beta \}$, the order associated to $(\alpha, \beta)$. Then $u=\exp (l/2)$ is a fundamental unit of $\mathcal{O}$.
\end{enumerate}
\end{theo}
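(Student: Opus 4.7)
The plan is to translate periodicity of the geodesic on $SL_2(\mathbb{Z})\backslash\mathfrak{h}$ into the existence of a hyperbolic element of $SL_2(\mathbb{Z})$ stabilizing the geodesic, and then to identify such elements with units of the order $\mathcal{O}$ via their action on the lattice $L := \mathbb{Z}\alpha + \mathbb{Z}\beta$. Working on the unit tangent bundle $T^1\mathfrak{h} \cong SL_2(\mathbb{R})/SO(2)$, the geodesic flow is right translation by $a_t := \mathrm{diag}(e^{t/2}, e^{-t/2})$. Choose $g \in SL_2(\mathbb{R})$ with $g\cdot 0 = \beta$ and $g\cdot\infty = \alpha$, so that the geodesic is $t\mapsto g a_t \cdot i$. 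Its image on $SL_2(\mathbb{Z})\backslash\mathfrak{h}$ is a closed curve of length $l > 0$ exactly when there is $\gamma \in SL_2(\mathbb{Z})\setminus\{\pm I\}$ with $g^{-1}\gamma g = \pm a_l$; equivalently, $\gamma$ is hyperbolic, fixes $\alpha$ and $\beta$ on $\partial\mathfrak{h}$, and has eigenvalues $\pm e^{\pm l/2}$.

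For part (i), the reverse implication is handled directly: for $\gamma = \begin{pmatrix} a & b \\ c & d \end{pmatrix} \in SL_2(\mathbb{Z})$, the fixed-point equation is $c z^2 + (d-a) z - b = 0$, and hyperbolicity $|a+d|>2$ forces $c \ne 0$ together with positive discriminant, so $\alpha, \beta$ are Galois-conjugate real quadratic irrationals. For the forward direction, given conjugate quadratic irrationals $\alpha, \beta$, Dirichlet's unit theorem applied to $\mathcal{O} \subset K := \mathbb{Q}(\alpha)$ (of rank one) produces $\epsilon \in \mathcal{O}^\times$ with $N_{K/\mathbb{Q}}(\epsilon) = +1$ and $\epsilon > 1$ at the embedding sending the chosen generator to $\alpha$. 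Multiplication by $\epsilon$ preserves $L$ and so defines $M_\epsilon \in SL_2(\mathbb{Z})$ in the ordered basis $(\alpha,\beta)$; this $M_\epsilon$ fixes $\alpha$ and $\beta$ on $\partial\mathfrak{h}$, providing the required hyperbolic $\gamma$.

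For part (ii), the stabilizer $\Gamma_{\alpha,\beta}$ of the oriented geodesic in $PSL_2(\mathbb{Z})$ is infinite cyclic, generated by the element $\gamma_0$ corresponding to the minimal closed loop of length $l$. The map $\epsilon \mapsto M_\epsilon$ constructed above is an isomorphism from $\mathcal{O}^{\times,1}/\{\pm 1\}$ (the norm-one units modulo $\pm 1$) to $\Gamma_{\alpha,\beta}$: injectivity is clear from the action on $L\otimes\mathbb{Q} = K$, and surjectivity follows since any $\gamma\in\Gamma_{\alpha,\beta}$ acts on $K$ by multiplication by some $\epsilon \in K^\times$ preserving $L$ and its inverse, hence $\epsilon \in \mathcal{O}^\times$. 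Reading off the positive eigenvalue of $\gamma_0$ from $g^{-1}\gamma_0 g = \pm a_l$ identifies $u = \exp(l/2)$ with the generator of $\mathcal{O}^{\times,1}/\{\pm 1\}$. The main step requiring care is this identification of the stabilizer with the unit group, together with correct bookkeeping of the $\pm I$ and norm-sign ambiguity: when $\mathcal{O}$ contains a unit of norm $-1$ one has to reconcile the generator of $\mathcal{O}^{\times,1}/\{\pm 1\}$ with a \emph{fundamental} unit of $\mathcal{O}$ in the sense of the theorem statement, and this normalization is the only genuinely delicate point.
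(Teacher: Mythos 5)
The paper does not actually prove Theorem \ref{sarnak-lag}: it is quoted from Sarnak, and what the paper proves is the generalization in Section \ref{heeg} (Proposition \ref{orderunits}, Propositions \ref{unitisperiod} and \ref{periodisunit}, Corollary \ref{corunitperiod}, Theorem \ref{perHeeg}). Your overall strategy --- closed geodesic $\Leftrightarrow$ hyperbolic element of $SL_2(\mathbb{Z})$ stabilizing the axis $\Leftrightarrow$ non-torsion norm-one unit of the order, with Dirichlet's unit theorem supplying existence --- is the same circle of ideas specialized to $n=2$, so in spirit you are on the paper's track.

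There is, however, a concrete error at the central step. If $M_\epsilon$ is the matrix of multiplication by $\epsilon$ in the ordered basis $(\alpha,\beta)$ of $L=\mathbb{Z}\alpha+\mathbb{Z}\beta$, then the column vector ${}^t\!(\alpha,\beta)$ is an eigenvector of $M_\epsilon$ with eigenvalue $\sigma_1(\epsilon)$, and ${}^t\!(\beta,\alpha)$ is an eigenvector with eigenvalue $\sigma_2(\epsilon)$; hence the fixed points of $M_\epsilon$ on $\partial\mathfrak{h}$ are the ratios $\alpha/\beta$ and $\beta/\alpha$, \emph{not} $\alpha$ and $\beta$ (the row-vector convention only changes this by signs). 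So the hyperbolic element you produce stabilizes the wrong geodesic, and the same defect undermines the surjectivity argument in part (ii), where you identify the stabilizer of the geodesic through $\alpha,\beta$ with the units acting on $L$. The repair is to use a basis $w={}^t\!(w_1,w_2)$ with $w_1/w_2=\alpha$ --- e.g.\ $(\alpha,1)$ --- so that ${}^t\!(\alpha,1)$ and ${}^t\!(\beta,1)$ are the eigenvectors and the Möbius fixed points are exactly $\alpha$ and $\beta$; this is precisely the normalization built into the paper's framework (Definition \ref{dfnheeg} and Proposition \ref{unitisperiod}: the endpoints of $\varpi_{\infty,w}$ are $\sigma_1(w_1/w_2)$ and $\sigma_2(w_1/w_2)$, and periods are matrices with $Aw=\lambda w$). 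The correction matters for part (ii) as well: the order that actually arises is the multiplier ring of $\mathbb{Z}\alpha+\mathbb{Z}$ (consistent with Theorem \ref{lag}(ii)), which in general differs from that of $\mathbb{Z}\alpha+\mathbb{Z}\beta$ --- and $\mathbb{Z}\alpha+\mathbb{Z}\beta$ can even fail to have rank two when $\mathrm{Tr}(\alpha)=0$. The remaining points you raise (infinite cyclicity of the stabilizer, the $\pm I$ and norm $-1$ bookkeeping) are handled correctly; the identification $T^1\mathfrak{h}\cong SL_2(\mathbb{R})/SO(2)$ is a harmless slip ($SL_2(\mathbb{R})/SO(2)$ is $\mathfrak{h}$ itself, and $T^1\mathfrak{h}\cong PSL_2(\mathbb{R})$).
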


Our plan is as follows. First, in Section \ref{heeg}, we present a geometric framework in which a generalization of Theorem \ref{sarnak-lag} can be considered and proved. In Section \ref{defheeg}, we construct a map 
$$\varpi_S : T_S^1 \rightarrow \mathfrak{h}^n_S$$ 
associated to a $\mathbb{Q}$-basis of a number field of degree $n$ and a certain finite set $S$ of places of $\mathbb{Q}$ containing the archimedean place $\infty$, where $T_S^1$ is a parameter space and $\mathfrak{h}^n_S$ is a generalized upper-half space $GL_n(\mathbb{Q}_S)/\mathbb{Q}_S^{\times}K$. Here, $\mathbb{Q}_S:= \prod_{v \in S}\mathbb{Q}_v$ is the ring of $S$-ad\`eles of $\mathbb{Q}$, and $K$ is a certain maximal compact subgroup of $GL_n(\mathbb{Q}_S)$. In the simplest case where $n=2$ and $S=\{\infty\}$, $\mathfrak{h}^n_S$ is the usual Poincar\'e upper half plane $\mathfrak{h}$, and the map $\varpi_S$ composed with the natural projection $\pi : \mathfrak{h} \rightarrow SL_2(\mathbb{Z}) \backslash \mathfrak{h}$ defines either a Heegner point or a closed geodesic depending on the number field is imaginary quadratic or real quadratic, respectively. Darmon~\cite{darmon} uses the term Heegner objects to indicate such Heegner points and closed geodesics simultaneously, and we call these $\varpi_S$ also Heegner objects. We are interested in the periods of the map 
$$\pi \circ \varpi_S :T_S^1 \rightarrow \Gamma_S \backslash \mathfrak{h}_S^n,$$
where $\Gamma_S = SL_n(\mathbb{Z}[1/N]), N:= \prod_{p \in S \text{:fin}}p$, is a discrete group acting on $\mathfrak{h}_S^n$, and $\pi: \mathfrak{h}_S^n \rightarrow \Gamma_S \backslash \mathfrak{h}_S^n$ is the natural projection. We define the ``period group'' $\Gamma_{\varpi_S} \subset \Gamma_S$ which captures the periods of this map. In Sections \ref{sectorderunits} and \ref{sectper}, we prove the following generalization of Theorem \ref{sarnak-lag}.
\begin{theo}[Proposition \ref{orderunits}, Corollary \ref{corunitperiod}, Theorem \ref{perHeeg}]\label{theoheeg}~
\begin{enumerate}[{\rm (i)}]
\item The image of $\pi \circ \varpi_S$ in $\Gamma_S \backslash \mathfrak{h}_S^n$ is compact.
\item The period group $\Gamma_{\varpi_S}$ is isomorphic to the norm one unit group $\mathcal{O}_{w,S}^1$ in a natural way. 
(Here, $\mathcal{O}_{w,S}^1$ is the group of norm one units in $\mathcal{O}_{w,S}$, a certain $\mathbb{Z}[1/N]$-order in $F$ associated to $w$. See Definition \ref{dfnorderunits}.)
\end{enumerate}
 \end{theo}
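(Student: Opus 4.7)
The plan is to exploit the regular representation $\rho_w : F \hookrightarrow M_n(\mathbb{Q})$ associated to the $\mathbb{Q}$-basis $w = (w_1,\dots,w_n)$ of $F$. Extending scalars gives an embedding of $S$-adelic tori $F_S = F \otimes_{\mathbb{Q}} \mathbb{Q}_S \hookrightarrow M_n(\mathbb{Q}_S)$, under which $F_S^{\times}$ maps into $GL_n(\mathbb{Q}_S)$ and the norm-one subgroup $F_S^{1}$ into $SL_n(\mathbb{Q}_S)$. By the construction of $\varpi_S$, the parameter space $T_S^{1}$ should be identified with $F_S^{1}$ modulo its intersection with the stabilizer $\mathbb{Q}_S^{\times}K$, and $\varpi_S$ is essentially the map induced by $\rho_w$ on this quotient. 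The order $\mathcal{O}_{w,S}$ is defined as the set of $x \in F$ whose multiplication preserves the $\mathbb{Z}[1/N]$-lattice $\sum \mathbb{Z}[1/N] w_i$, so $\rho_w$ sends $\mathcal{O}_{w,S}$ into $M_n(\mathbb{Z}[1/N])$ and $\mathcal{O}_{w,S}^{1}$ into $\Gamma_S = SL_n(\mathbb{Z}[1/N])$.

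For part (i), the rank-one hypothesis combined with Dirichlet's $S$-unit theorem implies that $\mathcal{O}_{w,S}^{1}$ is a finitely generated abelian group whose quotient $F_S^{1}/(\mathcal{O}_{w,S}^{1}\cdot(F_S^{1}\cap \mathbb{Q}_S^{\times}K))$ is compact. Translated through the identification $T_S^{1} \simeq F_S^{1}/(F_S^{1}\cap \mathbb{Q}_S^{\times}K)$, this says that the action of $\mathcal{O}_{w,S}^{1}$ on $T_S^{1}$ by translation has compact quotient. Since elements $u\in\mathcal{O}_{w,S}^{1}$ act on $\varpi_S(T_S^{1})$ by left multiplication by $\rho_w(u) \in \Gamma_S$, one obtains the identification $\pi\circ\varpi_S(T_S^{1}) = \pi\circ\varpi_S(T_S^{1}/\mathcal{O}_{w,S}^{1})$ in $\Gamma_S\backslash\mathfrak{h}_S^n$, and compactness follows.

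For part (ii), the previous step already provides an injective homomorphism $\rho_w : \mathcal{O}_{w,S}^{1}\to \Gamma_{\varpi_S}$, because left multiplication by $\rho_w(u)$ on $\varpi_S$ corresponds to right translation by $u$ on the $F_S^{1}$-parameter, which is by definition a period of $\pi\circ\varpi_S$. Injectivity is immediate from faithfulness of the regular representation. For surjectivity, any $\gamma \in \Gamma_{\varpi_S}$ preserves the image $\varpi_S(T_S^{1})$ in $\mathfrak{h}_S^n$; using the (essentially) free transitive action of $F_S^{1}$ on $T_S^{1}$, the period condition forces $\gamma$ to intertwine with the $\rho_w(F_S^{1})$-action, and hence to lie projectively (modulo $\mathbb{Q}_S^{\times}K$) in the commutant of $\rho_w(F_S)$ inside $M_n(\mathbb{Q}_S)$. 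Since $F$ is a field of degree $n$, its regular representation is its own commutant in $M_n(\mathbb{Q})$, and after extending this to $M_n(\mathbb{Q}_S)$ together with the $\mathbb{Z}[1/N]$-integrality of $\gamma$, we conclude $\gamma \in \rho_w(F^{\times}) \cap \Gamma_S = \rho_w(\mathcal{O}_{w,S}^{1})$.

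The main obstacle is the surjectivity argument in (ii). The delicate point is that the intertwining condition is only available modulo the compact group $\mathbb{Q}_S^{\times}K$, so one must carefully promote an adelic commutation relation to an integral equality in $M_n(\mathbb{Z}[1/N])$, and then descend from $F_S$ to a global element of $F$. This is expected to require a strong-approximation-type argument, exploiting both the $\mathbb{Z}[1/N]$-integrality of $\gamma$ and the rigidity of the maximal torus $\rho_w(F^{\times}_S) \subset GL_n(\mathbb{Q}_S)$.
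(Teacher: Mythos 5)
Your treatment of part (i) and of the inclusion $\mathcal{O}_{w,S}^1 \hookrightarrow \Gamma_{\varpi_S}$ follows essentially the paper's route: left multiplication by $\iota_w(u)$ for a norm-one unit $u$ translates the parameter by the tuple of absolute values of $u$, and compactness reduces to compactness of the norm-one id\`ele class group together with the fact that $\mathcal{O}_{w,S}^1$ has finite index in $\mathcal{O}_F[1/N]^1$ (Propositions \ref{orderunits}, \ref{unitisperiod} and Theorem \ref{perHeeg}). Two small remarks: the verification that multiplication by $u$ really factors, at each place, as a scaling by $|v(u)|_p$ times an element of the maximal compact is a genuine (if short) computation and at the finite places uses the unramifiedness hypothesis; and no rank-one hypothesis enters here --- none is assumed in the statement, so invoking one in part (i) is spurious.

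The genuine gap is exactly where you flag it: surjectivity of $\mathcal{O}_{w,S}^1 \to \Gamma_{\varpi_S}$. You assert that the period condition ``forces $\gamma$ to intertwine with the $\rho_w(F_S^1)$-action and hence to lie projectively in the commutant of $\rho_w(F_S)$,'' but the period condition only gives, for each $t$, an identity in $\mathfrak{h}_S^n$, i.e.\ up to a $t$-dependent element of $\mathbb{R}^{\times}O(n)$ (resp.\ $\mathbb{Q}_p^{\times}GL_n(\mathbb{Z}_p)$); eliminating these $t$-dependent compact factors is the entire content of the claim, and neither the commutant statement nor the proposed ``strong-approximation-type argument'' is actually carried out. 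The paper's Proposition \ref{periodisunit} resolves this by a concrete degeneration at a single archimedean place: writing $W=(w^{(1)}\cdots w^{(r)})$ and $W^{-1}AW=(B_{ij})$ in blocks, the orthogonality relation $R(t)\,{}^t\!R(t)=I_n$ for $R(t)=I(\rho^{-1})I(t^{-1})W^{-1}AWI(t)\in O(n)$ yields identities quadratic in the ratios $t_k/t_l$; letting $t_k/t_l\to 0$ for $k\neq l$ isolates the diagonal block and forces $B_{jl}=0$ for $j\neq l$, whence $Aw^{(l)}=\lambda w^{(l)}$ with $\lambda\in\sigma_l(F)$ automatic, so $A\in\Gamma_{w,S}$. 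No ad\`elic commutation or approximation argument is needed: a single archimedean eigenvector equation pins $A$ down globally because $w$ is a $\mathbb{Q}$-basis of $F$. If you want to rescue your commutant approach, the missing lemma is precisely that an element of $\bigcap_t I(t)\,\mathbb{R}^{\times}O(n)\,I(t)^{-1}$ is block diagonal --- which is what the limit argument proves --- and you would still need to supply it.
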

 In Section \ref{sectchi}, we extend Theorem \ref{theoheeg} to the $\chi$-components of unit groups for ``quadratic characters'' $\chi$.\\

  Then, we restrict ourselves to the case where the ``rank'' of $\varpi_S$ is one. In this case, we give algorithms to calculate the period group $\Gamma_{\varpi_S}$, and consider generalizations of Theorem \ref{lag}.
 
 Section \ref{gcf} is devoted to the case where $S=\{\infty\}$. In this case, $\mathfrak{h}_S^n$ has a natural structure of a Riemannian manifold and a one dimensional Heegner object $\varpi_S$ is a geodesic on $\mathfrak{h}_S^n$. In Sections \ref{gcf1} and \ref{siegelsets}, we present an algorithm (Algorithm \ref{GCF}) which is a slight generalization of the geodesic continued fractions originally studied by Lagarias~\cite{lagarias} and Beukers~\cite{beukers} in the area of Diophantine approximations. The algorithm ``expands'' a certain class of geodesics on $\mathfrak{h}_S^n$ including all Heegner objects, and provides a sequence $\{(A_k, B_k, \varpi_k)\}_{k \in \mathbb{Z}}$, where $A_k, B_k \in \Gamma_S$, and $\varpi_k=B_k^{-1}\varpi$ for such a geodesic $\varpi$. 
 In Section \ref{sectapp}, we prove the following generalization of Theorem \ref{lag}.
\begin{theo}[Theorem \ref{arch-main}]\label{theoarch-main}
Let $\varpi_S$ be a one dimensional Heegner object. Let $\{(A_k, B_k, \varpi_{S, k})\}_{k \in \mathbb{Z}}$ be the sequence obtained by Algorithm \ref{GCF} applied to $\varpi_S$.
\begin{enumerate}[{\rm (i)}]
\item Then there exist $k_0,  k_1 \in \mathbb{N}, k_0<k_1$, and $\rho \in T_S^1, \rho \neq 1$, such that $\varpi_{S,k_0}(t) = \varpi_{S,k_1}(\rho t)$ for all $t \in T_S^1$, that is, the algorithm is ``periodic''.
\item Moreover, $B_{k_1}B_{k_0}^{-1} \in \Gamma_{\varpi_S}$ generates a finite index subgroup of $\Gamma_{\varpi_S} \simeq \mathcal{O}_{w,S}^1$.
\end{enumerate}
\end{theo}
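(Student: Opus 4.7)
The plan is to follow the geometric spirit of Sarnak's argument, leveraging Theorem \ref{theoheeg}(i) (compactness of the projected image) together with the discrete, bounded nature of the data produced at each step of Algorithm \ref{GCF} via the Siegel-set reduction developed in Section \ref{siegelsets}. Recall that the $k$-th step will produce $B_k \in \Gamma_S$ and the translated geodesic $\varpi_{S,k}(t) = B_k^{-1}\varpi_S(t)$, together with an integral ``partial quotient'' matrix $A_k$; by construction the algorithm is designed so that $\varpi_{S,k}$, measured at a canonical parameter $t_k$, is pulled into a distinguished Siegel domain in $\mathfrak{h}_S^n$.

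For part (i), I would argue that the state $\varpi_{S,k}$, viewed as a map $T_S^1 \to \mathfrak{h}_S^n$ up to reparametrization $t \mapsto \rho t$, takes only finitely many values. Indeed, since $\pi \circ \varpi_{S,k} = \pi \circ \varpi_S$ and the latter has compact image by Theorem \ref{theoheeg}(i), the evaluation $\varpi_{S,k}(t_k)$ lies in the intersection of a fixed Siegel domain with the preimage of a compact set of $\Gamma_S \backslash \mathfrak{h}_S^n$. By standard reduction theory this intersection meets only finitely many $\Gamma_S$-translates and so is itself compact, and moreover bounds the ``partial quotient'' $A_k$ to lie in a finite set. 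Combining these controls shows that the collection of possible states is finite, and pigeonhole then produces $k_0 < k_1$ with $\varpi_{S,k_0}(t) = \varpi_{S,k_1}(\rho t)$ for some $\rho \in T_S^1$. The inequality $\rho \neq 1$ is forced by the fact that the algorithm strictly advances along the geodesic at each step.

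For part (ii), the periodicity relation rewrites as $(B_{k_1}B_{k_0}^{-1})\cdot \varpi_S(t) = \varpi_S(\rho t)$, exhibiting $\gamma := B_{k_1}B_{k_0}^{-1}$ as an element of $\Gamma_{\varpi_S}$ by the very definition of the period group. Under the isomorphism $\Gamma_{\varpi_S} \simeq \mathcal{O}_{w,S}^1$ of Theorem \ref{theoheeg}(ii), $\gamma$ corresponds to a unit $u$ whose induced action on the parameter space $T_S^1$ is the nontrivial translation $t \mapsto \rho t$; in particular $u$ is not a root of unity. In the rank-one setting $\mathcal{O}_{w,S}^1$ has free part $\mathbb{Z}$, so any non-torsion element generates a finite index subgroup, giving (ii).

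The main obstacle will be making the finiteness step in (i) precise: one needs to show rigorously that the Siegel reduction implemented by Algorithm \ref{GCF}, together with the compactness of the image of $\pi \circ \varpi_S$, really does cut down the state space to a finite set modulo reparametrization. This requires a careful bookkeeping of the ``partial quotient'' data $A_k$ in the $S$-adelic higher-rank setting, playing the role of the classical bound $a_k \in \mathbb{N}_{>0}$ but extracted from the boundedness of $B_k^{-1}\varpi_S(t_k)$ inside the Siegel set together with the discreteness of $\Gamma_S$.
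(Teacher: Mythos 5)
Your part (ii) and the overall architecture (pigeonhole on the data produced by Algorithm \ref{GCF}, then identify the resulting period with a non-torsion element of $\Gamma_{\varpi_S}\simeq\mathcal{O}_{w,S}^1$, which has rank one) agree with the paper. But your proof of (i) has a genuine gap at exactly the point you flag as ``the main obstacle''. Knowing that the reduced points $B_k^{-1}\varpi_S(t_k)$ all lie in the compact set $\mathcal{F}\cap\pi^{-1}(C)$, and that the partial quotients $A_k$ range over a finite set, does \emph{not} imply that the states $\varpi_{S,k}$ modulo reparametrization are finite in number: two states agree up to reparametrization iff $B_{k_1}B_{k_0}^{-1}\in\Gamma_{\varpi_S}$, so ``finitely many states'' means finitely many cosets $\Gamma_{\varpi_S}B_k$, and a finite alphabet $\{A_k\}$ is perfectly compatible with the products $B_k=B_0A_1\cdots A_k$ filling infinitely many such cosets. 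The missing idea is to bring the period group itself into the pigeonhole. The paper does this in Theorem \ref{geodalgo}: by Propositions \ref{orderunits} and \ref{unitisperiod} together with Theorem \ref{perHeeg} there is a non-torsion $A\in\Gamma_{\varpi_S}$ with $A\varpi_S(t)=\varpi_S(\epsilon t)$, $\epsilon\neq 1$; sampling at $u_k=\epsilon^k u_0$ and locating $u_k\in[s_{N_k},t_{N_k}]$ gives $\varpi_S(u_0)\in B_0\mathcal{F}\cap A^{-k}B_{N_k}\mathcal{F}$, and the finiteness property of $\mathcal{F}$ forces $A^{-k}B_{N_k}=A^{-l}B_{N_l}$ for some $k<l$, whence $\varpi_{S,N_k}(t)=\varpi_{S,N_l}(\epsilon^{l-k}t)$. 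Your compactness route can be repaired in the same spirit (write $t_k=\lambda_k\tau_k$ with $\lambda_k\in\Lambda_{\varpi_S}$ and $\tau_k$ in a fixed compact set, so that $C_k^{-1}B_k$ lies in a finite set for suitable $C_k\in\Gamma_{\varpi_S}$), but this uses the cocompactness of $\Lambda_{\varpi_S}$ in $T_S^1$ in an essential way, not merely the compactness of a single reduced slice of $\mathfrak{h}_S^n$.

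A second problem is your justification of $\rho\neq 1$. Since $\varpi_S$ is injective (Proposition \ref{heeginj}), $\rho=1$ is equivalent to $\varpi_{S,k_0}=\varpi_{S,k_1}$ as maps, i.e.\ to $B_{k_1}B_{k_0}^{-1}$ being a (possibly nontrivial) \emph{torsion} element of $\Gamma_{\varpi_S}$; the fact that the algorithm strictly advances does not exclude this, because the geodesic may re-enter the same translate $B\mathcal{F}$ at a later parameter. The paper's argument sidesteps this because its pigeonhole is on $A^{-k}B_{N_k}$ with $A$ non-torsion, so the resulting $\rho=\epsilon^{l-k}$ is automatically $\neq 1$ and $N_k\neq N_l$. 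If one pigeonholes only on cosets, one must separately rule out that all coincidences are torsion, e.g.\ by observing that otherwise infinitely many of the disjoint intervals $[s_k,t_k]$ would be connected components of the single set $\varpi_S^{-1}(\varpi_S(T_S^1)\cap B\mathcal{F})$, contradicting weak $\mathfrak{G}$-convexity. Finally, you do not address why the forward loop never terminates (i.e.\ $t_k<\infty$ for all $k$), which is part (i) of Theorem \ref{geodalgo} and is needed before any pigeonhole can be run.
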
 
We also prove the $\chi$-component version of Theorem \ref{theoarch-main} in Section \ref{sectapp}. 

Section \ref{p-gcf} is devoted to the case where $S=\{\infty, p\}, n=2$ for a prime number $p$. In this case a Heegner object $\varpi_S$ is a $p$-geodesic, which we define in Definition \ref{p-geo}. In Sections \ref{sectp-FD} and \ref{sectp-gcf}, we present a $p$-geodesic version of Algorithm \ref{GCF} (Algorithm \ref{p-gcfalgo}) which ``expands'' a $p$-geodesic $\varpi$ and provides a sequence $\{(A_k, B_k, \varpi_k)\}_{k \in \mathbb{Z}}$, where $A_k, B_k \in \Gamma_S$, and $\varpi_k=B_k^{-1}\varpi$. 
We should remark that Algorithm \ref{p-gcfalgo} is essentially the same algorithm as in Mahler~\cite{mahler}, and its periodicity has already been studied by Mahler~\cite{mahler}, Deanin~\cite{deanin} and de Weger~\cite{deweger}. 
In Section \ref{sectp-cf}, we develop Algorithm \ref{p-gcfalgo} into a more intuitive form which is similar to the classical continued fractions of real numbers. We define $\{\infty,p\}$-continued fraction expansion (Algorithm \ref{p-cf}) which formally expands a number $z \in \mathfrak{h}\cap \mathbb{Q}_p \subset \mathbb{C} \simeq \overline{\mathbb{Q}}_p$ (we fix an isomorphism $\mathbb{C} \simeq \overline{\mathbb{Q}}_p$ of fields) in the following form. 
\begin{eqnarray*}
z &=&\delta _1( a_1 +p^2b_{10}- \cfrac{p^2}{b_{11}- \cfrac{1}{{\ddots}-\cfrac{1}{b_{1l_1}-\cfrac{1}{\delta _2(a_2+p^2b_{20}-\cfrac{p^2}{b_{21}-\cfrac{1}{\ddots}^{\ }})^{\delta _2 }}^{\ }}^{\ }}^{\ }}^{\ })^{\delta _1}\\
   &=& [\  \delta _1 ;\  a_1 ;\  b_{10}, \dots , b_{1l_1}  ;\   \delta _2 ; \ a_2 ;\   b_{20}, \dots , b_{2l_2}  ;  \dots ].
\end{eqnarray*}
We prove the following generalization of Theorem \ref{lag}.
\begin{theo}[Theorem \ref{p-lag}, Theorem \ref{p-lag2}]\label{theop-main}~
\begin{enumerate}[{\rm (i)}]
\item If $z \in \mathfrak{h} \cap \mathbb{Q}_p \subset \mathbb{C} = \overline{\mathbb{Q}}_p$ is an imaginary quadratic irrational, then its $\{\infty,p\}$-continued fraction expansion is periodic. 
\item Let $F$ be an imaginary quadratic field $\mathbb{Q}(\sqrt{-d}) \subset \mathbb{C} = \overline{\mathbb{Q}}_p$, where $d$ is a positive square free integer. Suppose $F \subset \mathbb{Q}_p$. Let $\theta = \dfrac{-1+\sqrt{-d}}{2}$ if $-d \equiv 1\mod 4$, and $\theta = \sqrt{-d}$ otherwise. Then the $\{\infty,p\}$-continued fraction expansion of $\theta$ is purely periodic and its period provides a fundamental norm one $p$-unit of $F$. (See Section \ref{sectp-app} for the details.)
\end{enumerate}
\end{theo}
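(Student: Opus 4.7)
The plan is to translate the statement about the $\{\infty,p\}$-continued fraction expansion of $z$ into a statement about the associated $p$-geodesic Heegner object, and then invoke Theorem~\ref{theoheeg} together with the periodicity of Algorithm~\ref{p-gcfalgo}. To a quadratic irrational $z\in \mathfrak{h}\cap \mathbb{Q}_p$ I would attach the one-dimensional Heegner object $\varpi_S$ for $S=\{\infty,p\}$, $n=2$, built from the basis $w=\{1,z\}$ of $F=\mathbb{Q}(z)$. Algorithm~\ref{p-cf} is, by construction, a coordinate reformulation of Algorithm~\ref{p-gcfalgo}: the data $(\delta_k, a_k, b_{kj})$ read off a $\{\infty,p\}$-continued fraction expansion record precisely the matrices $(A_k, B_k)$ produced by the $p$-geodesic algorithm applied to $\varpi_S$, with $\varpi_{S,k}=B_k^{-1}\varpi_S$. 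Periodicity of the expansion therefore amounts to the existence of $k_0<k_1$ and $\rho\in T_S^1$ with $\varpi_{S,k_0}(t)=\varpi_{S,k_1}(\rho t)$.

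For (i), I would argue exactly as in the archimedean Theorem~\ref{theoarch-main}: by Theorem~\ref{theoheeg}(i) the image of $\pi\circ\varpi_S$ in $\Gamma_S\backslash \mathfrak{h}_S^n$ is compact, and the $p$-adic fundamental domain from Section~\ref{sectp-FD} meets this image in only finitely many $\Gamma_S$-translates. Consequently the $\varpi_{S,k}$ take only finitely many values modulo $\Gamma_{\varpi_S}$, which forces the required recurrence. This is essentially the periodicity result of Mahler, Deanin and de Weger and could alternatively be quoted directly.

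For (ii) two further features are needed beyond (i): pure periodicity from $k_0=0$, and identification of the period with a \emph{fundamental} norm-one $p$-unit. Pure periodicity would come from the distinguished shape of $\theta$: the pair $w=\{1,\theta\}$ is an integral basis of $\mathcal{O}_F$, and under the embedding $F\subset\mathbb{Q}_p$ the initial Heegner datum $\varpi_S$ already sits in the fundamental domain used to seed Algorithm~\ref{p-gcfalgo}, so no preparatory reduction is performed and one can take $A_0=B_0=1$. The fundamentality claim then follows from the isomorphism $\Gamma_{\varpi_S}\simeq \mathcal{O}_{w,S}^1$ of Theorem~\ref{theoheeg}(ii): since $w$ is an integral basis, $\mathcal{O}_{w,S}^1$ is the full group of norm-one $p$-units of $F$, and the minimal-period element $B_{k_1}$ should correspond to a generator of this group modulo torsion.

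The main obstacle is this last step, because the general Theorem~\ref{theoarch-main}(ii) only yields a generator of a \emph{finite-index} subgroup. One has to show that in the $p$-adic setting, and for the distinguished starting point $\theta$, no proper subgroup appears — equivalently, that between two successive returns to the same $\varpi_S$ the accumulated $\{\delta_k\}$-factor exhausts the $T_S^1$-stabilizer of $\varpi_S$. I expect this to reduce to a direct analysis of how the $\delta_k$ encode the $p$-adic valuations and signs of successive steps over one minimal period, using pure periodicity to preclude any intermediate return that would correspond to a proper subgroup.
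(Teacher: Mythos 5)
Your overall strategy for part (i) matches the paper's: attach a Heegner object to $w={}^t(z,1)$, use the compactness of Theorem \ref{perHeeg} to get periodicity of the associated $p$-geodesic, and transfer this to the algorithm via the finiteness of the fundamental domain (Theorem \ref{p-gcfper}). However, you treat the assertion that Algorithm \ref{p-cf} is ``by construction a coordinate reformulation'' of Algorithm \ref{p-gcfalgo} as free, and it is not: the paper spends Lemma \ref{lemp2} and an induction (Proposition \ref{claim1}) verifying that the composite $Q_kP_kD_k$ of the initialization, $p$-adic and $\infty$-adic reduction steps really lands $B_k^{-1}\varpi(p^k)$ in $\mathcal{F}_0=\mathcal{D}\times\{[I]\}$, i.e.\ that the explicit expansion tracks the $B_k$ of the geodesic algorithm. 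There is also the small but necessary point (Corollary \ref{auxgeod}) that the auxiliary $p$-geodesic $\varpi$ only agrees with the Heegner object $\varpi_S$ for $2\nu\ge e_0=v_p(z-\bar z)$, which is why the periodicity statement is formulated ``for $\nu\ge\nu_0$.'' These are routine but non-trivial verifications that your write-up elides.

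The genuine gap is in part (ii), and you correctly locate it but do not close it. You need that the minimal period of the algorithm detects \emph{every} element of $\Gamma_{\varpi_S}\simeq\mathcal{O}_F[1/p]^1$, not merely a finite-index subgroup, and your proposed ``direct analysis of how the $\delta_k$ encode valuations and signs'' is not an argument. The mechanism the paper uses is Proposition \ref{minper}: if $A\in\Gamma_{\varpi_S}$ satisfies $A\varpi_S(p^{\nu})=\varpi_S(p^{\nu+N})$ with $N>0$, then for suitable $k$ both $B_k^{-1}\varpi(p^k)$ and $B_{k+N}^{-1}\varpi(p^{k+N})$ lie in $\mathcal{D}\times\{[I]\}$, so $C=B_{k+N}^{-1}AB_k\in SL_2(\mathbb{Z})$ maps the point $x_{k+1}\in\mathcal{D}$ to $x_{k+N+1}\in\mathcal{D}$; because $\mathcal{D}$ is an \emph{exact} fundamental domain (a system of representatives, unlike the LLL-reduced set in the archimedean case), this forces $x_{k+1}=x_{k+N+1}$ and $C$ torsion. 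This is precisely why the ambiguity present in Theorem \ref{arch-main}(ii) disappears here, and it is the step your proposal is missing. The paper then combines this with pure periodicity (itself deduced from Proposition \ref{minper} applied to $z'=p^2z_1$, using $\theta\in\mathcal{D}$ and $\theta-\bar\theta\in\mathbb{Z}_p^{\times}$) and a short divisibility argument ($N\mid N'$, $B_{N'}=B_N^r$, hence $r=1$) to conclude that $\varphi(B_N^{-1})$ generates $\mathcal{O}_F[1/p]^1/(\mathrm{tor})$. Without an argument of this kind your proof of (ii) establishes only that the period gives a non-torsion norm-one $p$-unit, not a fundamental one.
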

As an application of Theorem \ref{theop-main}, we present a method to find solutions to the Pell-like equations, which is an analog of the method to find solutions to the Pell equations using the classical continued fractions.

In short, we obtain generalizations of Lagrange's theorem in the following cases.
\begin{list}{}{}
\item[(a1)] Real quadratic, complex cubic, and totally imaginary quartic fields and its unit groups.
\item[(a2)] Quadratic extensions over totally real fields with exactly two real places and its relative unit groups.
\item[(a3)] Totally imaginary quadratic extensions over ATR fields, i.e. fields with exactly one complex place, and its relative unit groups.
\item[(b)] Imaginary quadratic fields and its norm one $p$-unit groups for a prime number $p$ which splits completely.
\end{list}
Note that (a1)$\sim$(a3) can be summarized as follows.
\begin{list}{}{}
\item[(a)] Extensions of number fields with rank one relative unit groups and its relative unit groups.
\end{list}
In all of these cases, the period of our algorithm gives a non-torsion unit of the number field. Moreover, in (b), the period of our algorithm gives a fundamental norm one $p$-unit, i.e. a generator of the norm one $p$-unit group modulo torsion subgroup.

In Section \ref{sectex}, we present some numerical examples of Algorithms \ref{GCF} and \ref{p-cf} and their periodicity.

\paragraph{Relation to preceding works} 
We briefly remark what is new and what is not in this paper.
\subparagraph{Heegner objects}
We can find essentially the same objects as our Heegner objects in some preceding works in slightly different directions, e.g. Hiroe, Oda~\cite{hiroeoda}, Oh~\cite{oh} and Einsiedler, et.al.~\cite{einsiedler}. The periodicity (i.e. the compactness) of these objects are discussed in all of these studies. Therefore, (i) of Theorem \ref{theoheeg}, more precisely, Proposition \ref{orderunits} and Theorem \ref{perHeeg}, are not especially new results.
 
 On the other hand, it seems no attempt has been made to apply these objects and its periodicity to the theory of multidimensional continued fractions. We do this by introducing the ``period group'' $\Gamma_{\varpi_S}$ which connects the unit groups $\mathcal{O}_{w,S}^1$ to the periods of our geodesic continued fractions. The theory on the $\chi$-components is also new in this paper.
 
\subparagraph{Geodesic continued fractions}
The theory of geodesic continued fractions was originally studied by Lagarias~\cite{lagarias} and Beukers~\cite{beukers} to find good Diophantine approximations. However, as Beukers~\cite[p.642]{beukers} remarked, no analogue of  Lagrange's theorem was known for geodesic continued fractions. In this paper, we establish a generalization of Lagrange's theorem by choosing geodesics, to be Heegner objects, different from those studied in \cite{lagarias} and \cite{beukers}. In order to do this, we extend the class of geodesics to which the algorithm is applicable by introducing the new notion of weak convexity (Definition \ref{FD}) instead of the convexity in the previous works.
 
\subparagraph{$\{\infty,p\}$-continued fractions}
 As we remarked above, the Algorithm \ref{p-gcfalgo} and its periodicity is studied in Mahler~\cite{mahler}, Deanin~\cite{deanin}, and de Weger~\cite{deweger}. 
 On the other hand, as far as the author knows, Algorithm \ref{p-cf}, which can be seen as a refinement of Algorithm \ref{p-gcfalgo}, and the results that follows are new.

\section{Preliminaries on Heegner objects}\label{heeg} 
 In this section we construct a totally geodesic submanifolds in the generalized upper half space associated to bases of number fields. For $n=2$, these are the Heegner points and closed geodesics on the modular curve $SL_2(\mathbb{Z}) \backslash \mathfrak{h}$ associated to imaginary and real quadratic fields, respectively. We call these objects Heegner objects following Darmon~\cite{darmon}. 

\subsection{Definitions}\label{defheeg}
 For a positive integer $n$ and a prime number $p$, let $\mathfrak{h}^n =\mathfrak{h}^n_{\infty} := GL_n(\mathbb{R})/\mathbb{R}^{\times}O(n)$ and let $\mathfrak{h}^n_p := GL_n(\mathbb{Q}_p)/\mathbb{Q}_p^{\times} GL_n(\mathbb{Z}_p)$. See Goldfeld~\cite{goldfeld}, \cite{goldfeld2}, Terras~\cite{terras}, \cite{terras2}, or Borel~\cite{borel} for basic references.  For $S$, a finite set of places of $\mathbb{Q}$, let $\mathfrak{h}^n_S$ be the generalized $S$-upper half space $\prod_{v \in S} \mathfrak{h}^n_v$. Then $SL_n(\mathbb{Q}_S)$ $(\mathbb{Q}_S:= \prod_{v \in S} \mathbb{Q}_v)$ acts on $\mathfrak{h}^n_S$ from the left. For $A=(a_{ij}) \in GL_n(\mathbb{R})$ (resp. $GL_n(\mathbb{Q}_p)$), we denote by $[A]$ or $[a_{ij}]$ the class of $A$ in $\mathfrak{h}_{\infty}^n=GL_n(\mathbb{R})/\mathbb{R}^{\times}O(n)$ (resp. $\mathfrak{h}_p^n=GL_n(\mathbb{Q}_p)/\mathbb{Q}_p^{\times}GL_n(\mathbb{Z}_p)$).
 
 In the following we fix an isomorphism $\mathbb{C} \simeq \mathbb{R}^2 ; x+iy \mapsto (y,x)$ of $\mathbb{R}$-vector spaces. 
 
 Let $F/ \mathbb{Q}$ be an extension of degree $n$, and let $S$ be a finite set of places of $\mathbb{Q}$ consisting of the archimedean place $\infty$ and finite places which is unramified in $F/ \mathbb{Q}$. We denote by $S^{\infty}$ the set of finite places in $S$, and by $S_F$ (resp. $S_{F,p}$) the set of places of $F$ above $S$ (resp. $p \in S$).
 
 Let $w _1 \cdots w _n \in F$ be a basis of $F$ over $\mathbb{Q}$. Set $w := {}^t\!(w _1 \cdots w _n) \in F^n$. Here, for a matrix $A$, we denote by ${}^tA$ the transpose matrix of $A$.
 
 For the archimedean part, we denote by $\sigma _1, \cdots, \sigma _{r_1}$ the real ebmeddings, and by $\tau _1 = \sigma _{r_1+1}, \cdots, \tau _{r_2} = \sigma _r, \bar{\tau} _1 = \sigma _{r+1}, \cdots, \bar{\tau}_{r_2} = \sigma _n$ the complex embeddings. We simply write $\alpha^{(i)}:= \sigma _i(\alpha)$. We identify $\sigma_i ~ (1 \leq i \leq r)$ with the archimedean places of $F$, and denote by $F_{\sigma_i}$ the completion of $F$ with respect to $\sigma_i$, i.e. $F_{\sigma_i}=\mathbb{R}$ (resp. $\mathbb{C}$) if $\sigma_i$ is real (resp. complex).
 Set $n_i =n_{\sigma_i} = [F_{\sigma_i}:\mathbb{R}]$. Then we have the natural embedding
 $$\sigma = \sigma _1 \times \cdots \times \sigma _r : F \hookrightarrow F_{\infty} = \mathbb{R}^{r_1} \times \mathbb{C}^{r_2} \simeq \mathbb{R}^n ; 
 \alpha \mapsto (\alpha^{(1)}, \cdots, \alpha^{(r)}),$$
  where the last $\mathbb{R}^{r_1} \times \mathbb{C}^{r_2} \simeq \mathbb{R}^n$ is the isomorphism of $\mathbb{R}$-vector spaces defined by the fixed identification $\mathbb{C} \simeq \mathbb{R}^2$.
  
  We define the archimedean part of the Heegner object associated to the basis $w= {}^t\! (w_1, \dots, w_n)$ of $F$ over $\mathbb{Q}$ as
  $$\varpi =\varpi _\infty = \varpi _{\infty, w} : (\prod_{v|\infty}\mathbb{R}_{>0})^1 \rightarrow \mathfrak{h}^n ; (t_1, \cdots, t_r) \mapsto [t_1w^{(1)} \cdots t_rw^{(r)}].$$
 Here $(\prod_{v|\infty}\mathbb{R}_{>0})^1$ is the subgroup of $\prod_{v|\infty}\mathbb{R}_{>0}=\mathbb{R}_{>0}^{r}$ consisting of $(t_1, \dots, t_r) \in \mathbb{R}_{>0}^r$ such that $\prod_{i=1}^r t_i^{n_i} =1$. We regard $(t_1w^{(1)} \cdots t_rw^{(r)})$ as an element of $GL_n(\mathbb{R})$ under the identification $\mathbb{C} \simeq \mathbb{R}^2$, and $[t_1w^{(1)} \cdots t_rw^{(r)}]$ is the class of $(t_1w^{(1)} \cdots t_rw^{(r)})$ in $\mathfrak{h}^n$. Note that the regularity of the matrix holds since $\sigma(w)={}^t\!(\sigma(w_1), \dots, \sigma(w_n))$ is a basis of $F_{\infty}$ over $\mathbb{R}$.

 Similarly, for a finite place $p \in S^{\infty}$, let $v_1, \cdots, v_g$ be the places of $F$ above $p$. For each $v = v_i | p$, we denote by $F_v$ the completion of $F$ with respect to $v$, and by $\mathcal{O}_v$ the ring of integers. We fix an isomorphism $\mathcal{O}_v \simeq \mathbb{Z}_p^{n_v}$ of free $\mathbb{Z}_p$-modules, where $n_v =[F_v:\mathbb{Q}_p]$, and extend it to the isomorphism $F_v \simeq \mathbb{Q}_p^{n_v}$ of $\mathbb{Q}_p$-vector spaces.
 Then we have the embedding
 $$v_1 \times \cdots \times v_g : F \hookrightarrow F_p := \prod F_{v_i} \simeq \prod \mathbb{Q}_p^{n_{v_i}} = \mathbb{Q}_p^n,$$
 where the middle isomorphism of $\mathbb{Q}_p$-vector spaces is defined by the fixed isomorphisms.
 
  We define the $p$-part of the Heegner object with an additional index $e = (e_1, \dots , e_g) \in \mathbb{Z}^g$ as
$$ \varpi _p= \varpi _{p, w,e} : (\prod_{v | p} p^{\mathbb{Z}})^1 \rightarrow \mathfrak{h}^{n}_{p} ; (t_1, \cdots, t_g) \mapsto [p^{e_1}t_1^{-1}v_1(w) \cdots p^{e_g}t_g^{-1}v_g(w)],$$
 where $ (\prod_{v | p} p^{\mathbb{Z}})^1$ is the subgroup of $ \prod_{v | p} p^{\mathbb{Z}}$ consisting of $(t_1, \dots, t_g) \in \prod_{v | p} p^{\mathbb{Z}}$ such that $\prod_{i}t_i^{n_{v_i}}=1$, and $[p^{e_1}t_1^{-1}v_1(w) \cdots p^{e_g}t_g^{-1}v_g(w)]$ is the class of $(p^{e_1}t_1^{-1}v_1(w) \cdots p^{e_g}t_g^{-1}v_g(w))$ in $\mathfrak{h}^n_p$. Here again we used the fixed isomorphism $F_v \simeq \mathbb{Q}_p^{n_{v}}$ and regard $(p^{e_1}t_1^{-1}v_1(w) \cdots p^{e_g}t_g^{-1}v_g(w))$ as an element of $GL_n(\mathbb{Q}_p)$.

\begin{dfn}\label{dfnheeg} 
 Let the notations be as above. We define the $S$-Heegner object associated to the basis $w={}^t\!(w_1, \cdots, w_n)$ (with indices $(e_p)_
 {p\in S^{\infty}}$) as the product of local ones,
$$\varpi _S:=\prod_{p\in S} \varpi _{p,w,e_p}  : T_S^1 \rightarrow \mathfrak{h}_S^n = \prod_{p \in S} \mathfrak{h}^n_p,$$
where $T_S^1:= (\prod_{v|\infty}\mathbb{R}_{>0})^1 \times \prod_{\substack{p \in S^{\infty}}} (\prod_{v | p} p^{\mathbb{Z}})^1 $. 
\end{dfn}

\begin{rmk}
 The definition of $S$-Heegner object depends only on the basis $w$ (and $(e_p)_p$), and does not depend on the choice of order of places $\sigma _i | \infty$, $v_i |p$ and the isomorphisms $\mathcal{O}_v \simeq \mathbb{Z}_p^{n_v}$, since a change of the order of places is canceled by the right action of $O(n)$ and $GL_n(\mathbb{Z}_p)$, and replacement of the isomorphism $\mathcal{O}_v \simeq \mathbb{Z}_p^{n_v}$ is also canceled by the right action of $GL_n(\mathbb{Z}_p)$.
\end{rmk}

\begin{prop}\label{heeginj}
The Heegner object $\varpi_S: T_S^1 \rightarrow \mathfrak{h}_S^n$ is an injective map.
\end{prop}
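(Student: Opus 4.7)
The plan is to exploit that $\varpi_S$ is by construction a product of local maps $\varpi_v$ ($v\in S$) with independent arguments, so it suffices to show injectivity at each place separately. The unifying structural point, valid at every place, is that the defining matrix factors as $M_v = W_v\, D_v$, where $W_v$ is the \emph{fixed} invertible matrix assembled from the embeddings of the basis $w$ and $D_v$ is the block-diagonal matrix whose blocks carry the variables: $t_i I_{n_i}$ at $v=\infty$, and $p^{e_i}t_i^{-1}I_{n_{v_i}}$ at $v=p$. Here $W_v$ is invertible in $GL_n(\mathbb{Q}_v)$ because $w$ is a $\mathbb{Q}$-basis of $F$ and $F_v = F\otimes_{\mathbb{Q}}\mathbb{Q}_v$, so its columns form a $\mathbb{Q}_v$-basis of $\mathbb{Q}_v^n$. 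So at each place the problem becomes: recover the entries of $D_v$ from the coset $[W_v D_v]$.

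For the archimedean place, if $\varpi_\infty(t)=\varpi_\infty(t')$, then $M = cM'k$ for some $c\in\mathbb{R}^\times$ and $k\in O(n)$. I would form $MM^T$, which kills $k$ on the right, giving $W D^2 W^T = c^2 W (D')^2 W^T$. Cancelling $W$ and $W^T$ and reading the diagonal yields $t_i^2 = c^2 (t'_i)^2$, hence $t_i = |c|\,t'_i$ for all $i$. The normalization $\prod t_i^{n_i}=\prod (t'_i)^{n_i}=1$ then forces $|c|^n=1$, so $t=t'$.

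For a finite place $p$, if $\varpi_p(t)=\varpi_p(t')$, then $M_p = c M'_p k$ with $c\in\mathbb{Q}_p^\times$ and $k\in GL_n(\mathbb{Z}_p)$. The analogue of ``taking $MM^T$'' is to pass to the $\mathbb{Z}_p$-lattice spanned by the columns, since $k$ preserves $\mathbb{Z}_p^n$; this gives $M_p\mathbb{Z}_p^n = cM'_p\mathbb{Z}_p^n$, and after cancelling $W_p$ on the left, $D_p\mathbb{Z}_p^n = cD'_p\mathbb{Z}_p^n$. Both lattices are direct sums with respect to the same block decomposition $\mathbb{Q}_p^n=\bigoplus_i\mathbb{Q}_p^{n_{v_i}}$, so intersecting with each summand yields, for every $i$, $p^{e_i}t_i^{-1}\mathbb{Z}_p = c\,p^{e_i}(t'_i)^{-1}\mathbb{Z}_p$. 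This says $v_p(t'_i)-v_p(t_i)=v_p(c)$ is the same integer for each $i$, and the constraint $\prod t_i^{n_{v_i}}=\prod(t'_i)^{n_{v_i}}=1$ forces $v_p(c)\cdot n=0$, hence $t=t'$.

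The only mildly delicate step is the extraction of block-wise equalities from the ambient lattice equality in the $p$-adic case; it rests on the fact that both $D_p\mathbb{Z}_p^n$ and $cD'_p\mathbb{Z}_p^n$ respect the \emph{same} block decomposition of $\mathbb{Q}_p^n$, so they split as direct sums of the corresponding blocks and one can compare the blocks individually. Everything else reduces to linear algebra over $\mathbb{Q}_v$ and the harmless use of the norm-one normalization.
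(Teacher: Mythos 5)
Your proof is correct and follows essentially the same route as the paper: factor the defining matrix as a fixed invertible $W_v$ times the block-scalar diagonal part, cancel $W_v$, and conclude that a diagonal matrix of the form $\lambda D'^{-1}D$ lies in the maximal compact subgroup, which together with the norm-one normalization forces $t=t'$. The paper phrases the archimedean step as $\lambda I(t'/t)\in O(n)$ rather than via $MM^{T}$, and leaves the $p$-adic case to the reader, but your $MM^{T}$ and column-lattice formulations are just explicit renderings of the same computation.
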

\begin{proof}
It suffices to show $\varpi_{\infty}$ and $\varpi_p$ (for $p \in S^{\infty}$) are all injective. Set $W := (w^{(1)} \cdots w^{(r)}) \in GL_n(\mathbb{R})$ as in the definition of $\varpi_{\infty}$, so that $\varpi (t) =\varpi_{\infty} (t_1,\dots, t_r) = [ W
I(t)
]$. Here $I(t)=I(t_1,\dots, t_r) :=
 \left(
		\begin{array}{ccc}
		t _1I_{n_1}&& \\
		&\ddots& \\
		&&t_rI_{n_r}
		\end{array}
\right)$, and $I_{k}\ (k \in \mathbb{N}_{>0})$ is the $k \times k$ identity matrix. Suppose $\varpi (t)=\varpi (t')$ for $t,t' \in (\prod_{v|\infty}\mathbb{R}_{>0})^1$. Then there exist $\lambda \in \mathbb{R}^{\times}$ and $R \in O(n)$ such that $WI(t) = WI(t')\lambda R$. Thus $\lambda I(t'/t) \in O(n)$, where $t'/t =(t_1'/t_1,\dots, t_r'/t_r)$. This is only possible if $t=t'$. This proves that $\varpi_{\infty}$ is injective. The case for a finite place $p \in S^{\infty}$ is essentially the same, replacing $O(n)$ with $GL_n(\mathbb{Z}_p)$, and we omit the  detail.
\end{proof}

\subsection{Order and unit group associated to $w$}\label{sectorderunits}
 We keep the notations in Section \ref{defheeg}.
 Set $N:= \prod_{p\in S^{\infty}} p$, and $\Gamma _S := SL_n(\mathbb{Z}[1/N])$.
\begin{dfn}\label{dfnorderunits}
Let $w={}^t\!(w_1, \dots ,w_n)$ be a basis of $F$ over $\mathbb{Q}$ as above. Set, 
 $$\mathcal{A}_{w,S} =\mathcal{A}_{w,N} := \{ A \in M_n(\mathbb{Z}[1/N])\  |\ \exists \lambda \in F, \ Aw = \lambda w \} ,$$
 $$\ \ \Gamma_{w,S} =\Gamma _{w,N}:= \{ A \in SL_n(\mathbb{Z}[1/N])\  |\ \exists \lambda \in F, \ Aw = \lambda w \} .$$
Then we define 
$\varphi = \varphi_{w} : \mathcal{A}_{w,S} \rightarrow F $ 
by $Aw = \varphi (A) w$, and set $\mathcal{O}_{w,S} := \varphi(\mathcal{A}_{w,S}) \subset F$. That is, $\varphi$ is the partial inverse map of 
$$\iota_w:  F \hookrightarrow \End_{\mathbb{Q}}(F) \simeq M_n(\mathbb{Q}); a \mapsto m_a,$$
where $m_a \in \End_{\mathbb{Q}}(F) \simeq M_n(\mathbb{Q})$ is defined by the ``multiplication by $a$'': $x \mapsto ax$, and the latter isomorphism of rings is defined by taking $w$ as a basis of $F$ over $\mathbb{Q}$. Indeed, $\mathcal{A}_{w,S} = \iota_w(F) \cap M_n(\mathbb{Z}[1/N])$, and $\iota_w \circ \varphi (A)= A$ for $A \in \mathcal{A}_{w,S}$.

If $S=\{\infty\}$, we omit $S$ and simply write $\mathcal{A}_{w}, \Gamma_{w}$, etc. Similarly, if $S=\{p\}$, we simply write $\mathcal{A}_{w,p}, \Gamma_{w,p}$, etc.
\end{dfn}
 
 In the following proposition we relate $\mathcal{A}_{w,S}$ and $\Gamma_{w,S}$ to the number field $F$ through $\varphi$.
 
 \begin{prop}\label{orderunits} 
 \begin{enumerate}[{\rm (i)}]
  \item The map $\varphi$ is an injective ring homomorphism. The image $\mathcal{O}_{w,S}$ of $\mathcal{A}_{w,S}$ under $\varphi$ is an $\mathbb{Z}[1/N]$-order in $F$. Moreover, let $\mathfrak{a}:= \bigoplus_{i=1}^n \mathbb{Z}[1/N]w_i \subset F$. Then we have $\mathcal{O}_{w,S}=\{x \in F\ |\ x\mathfrak{a} \subset \mathfrak{a} \}$.
 \item The following diagram commutes:
 $$
\begin{tabular}{ccc}
$F$&$\overset{N_{F/\mathbb{Q}}}{\longrightarrow}$&$\mathbb{Q}$ \\
{\small $\varphi$}$ \uparrow$&&\rotatebox{90}{$\subset$} \\
$\mathcal{A}_{w,S}$&$\overset{\det}{\longrightarrow}$&$\mathbb{Z}[1/N].$\\
\end{tabular}
$$

\item We have an exact sequence, 
$$1 \rightarrow \Gamma_{w,S} \hookrightarrow \mathcal{A}_{w,S}^{\times} \overset{\det }{\rightarrow} \mathbb{Z}[1/N]^{\times}. $$
In particular, $\varphi$ induces an isomorphism of groups $\Gamma_{w,S} \overset{\sim}{\rightarrow} \mathcal{O}_{w,S}^1$, where $$\mathcal{O}_{w,S}^1:=\{x \in \mathcal{O}_{w,S}^{\times}\ |\  N_{F/\mathbb{Q}}(x)=1\}.$$
\item $$rank_{\mathbb{Z}} \Gamma_{w,S} = rank_{\mathbb{Z}} \mathcal{O}_{w,S}^{\times} - (\# S -1)= \sum_{p \in S} (g_p-1).$$
where, for p $\in S$, $g_p$ denotes the number of places of $F$ above $p$. $($If $p=\infty$, $g_{\infty}=r.)$
 \end{enumerate}
 \end{prop}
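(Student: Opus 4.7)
The plan is to treat (i)--(iv) in order, with (i) doing the structural work and the rest following essentially formally from the algebra of the multiplication map $\iota_w$. For (i), I would start from the injective ring homomorphism $\iota_w \colon F \hookrightarrow M_n(\mathbb{Q})$ built from the basis $w$. Since $\mathcal{A}_{w,S} = \iota_w(F) \cap M_n(\mathbb{Z}[1/N])$ by definition, it is a subring of $M_n(\mathbb{Z}[1/N])$, and $\varphi = \iota_w^{-1}|_{\mathcal{A}_{w,S}}$ is automatically an injective ring homomorphism onto $\mathcal{O}_{w,S}$. To identify $\mathcal{O}_{w,S}$ with $\{x \in F : x\mathfrak{a} \subset \mathfrak{a}\}$: the equation $Aw = xw$ with $A \in M_n(\mathbb{Z}[1/N])$ expresses each $xw_i$ in the basis $w$ with coefficients in $\mathbb{Z}[1/N]$, so $x\mathfrak{a} \subset \mathfrak{a}$; conversely, the multiplication-by-$x$ matrix of any such $x$ has entries in $\mathbb{Z}[1/N]$. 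This realizes $\mathcal{O}_{w,S}$ as a $\mathbb{Z}[1/N]$-submodule of $\End_{\mathbb{Z}[1/N]}(\mathfrak{a})$, hence as a finitely generated $\mathbb{Z}[1/N]$-module; together with $\mathbb{Z}[1/N] \subset \mathcal{O}_{w,S}$ and $\mathcal{O}_{w,S} \otimes_{\mathbb{Z}[1/N]} \mathbb{Q} = F$, this makes it a $\mathbb{Z}[1/N]$-order.

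Parts (ii) and (iii) are then formal. For (ii), in the basis $w$ the matrix $\iota_w(a)$ is literally the matrix of multiplication by $a$ on $F$, so $\det \iota_w(a) = N_{F/\mathbb{Q}}(a)$ by the standard formula defining the norm; restricting to $A \in \mathcal{A}_{w,S}$ gives $\det A = N_{F/\mathbb{Q}}(\varphi(A))$, and $\det A \in \mathbb{Z}[1/N]$ because the entries of $A$ lie there. For (iii), the inclusion $\Gamma_{w,S} \subset \mathcal{A}_{w,S}^\times$ uses Cramer's rule (giving $A^{-1} \in M_n(\mathbb{Z}[1/N])$ when $\det A = 1$) together with the identity $A^{-1}w = \varphi(A)^{-1}w \in F \cdot w$; conversely any $A \in \mathcal{A}_{w,S}^\times$ has $\det A \in \mathbb{Z}[1/N]^\times$, and the kernel of $\det$ on $\mathcal{A}_{w,S}^\times$ is exactly $\Gamma_{w,S}$. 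Combining with (ii) gives the exact sequence and identifies $\Gamma_{w,S}$ with $\mathcal{O}_{w,S}^1$ via $\varphi$.

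For (iv), I would invoke the $S$-unit theorem for the maximal $\mathbb{Z}[1/N]$-order $\mathcal{O}_F[1/N]$, whose unit group has rank $\#S_F - 1 = \sum_{p \in S} g_p - 1$. Since $\mathcal{O}_{w,S}$ has finite index in $\mathcal{O}_F[1/N]$ as $\mathbb{Z}[1/N]$-modules, the induced index on unit groups is finite, so $\operatorname{rank}_{\mathbb{Z}} \mathcal{O}_{w,S}^\times = \sum_{p \in S} g_p - 1$ as well. The rank of $\mathcal{O}_{w,S}^1$ then equals this minus the rank of the image of $N_{F/\mathbb{Q}} \colon \mathcal{O}_{w,S}^\times \to \mathbb{Z}[1/N]^\times$, which is at most $\#S - 1 = \#S^\infty$, the rank of $\mathbb{Z}[1/N]^\times$. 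Equality here is the main obstacle: for each $p \in S^\infty$, pick any place $v \mid p$ of $F$ and use finiteness of the class group to write $\mathfrak{p}_v^h = (u)$ for some $u \in \mathcal{O}_F$, so that $u$ is an $S$-unit with $v_p(N_{F/\mathbb{Q}}(u)) \neq 0$ and $v_q(N_{F/\mathbb{Q}}(u)) = 0$ for the other $q \in S^\infty$; a further finite power of $u$ lies in $\mathcal{O}_{w,S}^\times$, and these $\#S^\infty$ norms are $\mathbb{Z}$-linearly independent modulo torsion. Subtraction then gives the rank $\sum_{p \in S}(g_p - 1)$ as required.
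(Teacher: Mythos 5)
Your proof is correct, and parts (i)--(iii) follow essentially the same route as the paper: the identity $\mathcal{A}_{w,S}=\iota_w(F)\cap M_n(\mathbb{Z}[1/N])$ does all the work for (i), the equality $\det\circ\iota_w=N_{F/\mathbb{Q}}$ gives (ii), and (iii) is formal. The one genuine divergence is in (iv), at the step showing that $\operatorname{rank}\Gamma_{w,S}=\operatorname{rank}\mathcal{O}_{w,S}^{\times}-(\#S-1)$, i.e.\ that the norm (equivalently $\det$) map on $\mathcal{A}_{w,S}^{\times}$ has image of full rank $\#S-1$ in $\mathbb{Z}[1/N]^{\times}$. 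You establish this by choosing, for each $p\in S^{\infty}$, a place $v\mid p$ and a generator $u$ of $\mathfrak{p}_v^{h}$ via finiteness of the class group, then passing to a power lying in $\mathcal{O}_{w,S}^{\times}$; this works but invokes class-group finiteness and the finite-index comparison of unit groups of orders. The paper instead observes that the scalar matrix $pI$ lies in $\mathcal{A}_{w,S}^{\times}$ for each $p\mid N$ and has determinant $p^{n}$, so the elements $p^{n}$ already generate a finite-index subgroup of $\mathbb{Z}[1/N]^{\times}$ --- a completely elementary shortcut that avoids any ideal-theoretic input. Both arguments then conclude with the $S$-unit theorem (for the maximal order plus finite index in your version, quoted directly for orders in the paper's), so the only real cost of your route is the extra machinery; the benefit is that your construction exhibits explicit norms with prescribed valuations, which the scalar-matrix trick does not.
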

 
\begin{proof}
(i) Since $\iota_w$ is a homomorphism of rings, It is clear that $\varphi$ is an injective ring homomorphism from the identity $\iota_w \circ \varphi (A)= A$ for $A \in \mathcal{A}_{w,S}$.

 Since $\mathcal{A}_{w,S}$ is a $\mathbb{Z}[1/N]$-submodule of $M_n(\mathbb{Z}[1/N])$, it is finite over $\mathbb{Z}[1/N]$ as a module. To see that $\mathcal{O}_{w,S}$ is an order, it is enough to show that $\mathcal{O}_{w,S}$ contains a generator of $F$ over $\mathbb{Q}$. Consider the equation for $X \in GL_n(\mathbb{Q})$,
$$
X
\left(
\begin{array}{c}
w _1 \\
\vdots \\
w _n 
\end{array}
\right)
= w _i 
\left(
\begin{array}{c}
w _1 \\
\vdots \\
w _n 
\end{array}
\right)
.$$
Since $(w _1, \cdots, w _n)$ is a bisis of $F$ over $\mathbb{Q}$, this always has a solution. Then by multiplying the common denominator $d_i \in \mathbb{N}_{>0}$ of elements of $X$, we obtain $A=d_iX \in \mathcal{A}_{w,S}$ such that $\varphi (A) = d_i w _i $. 
We get the last assertion by applying $\varphi$ to the identity $\mathcal{A}_{w,S} = \iota_w(F) \cap M_n(\mathbb{Z}[1/N])$. This proves {\rm (i)}.

(ii) This follows from the identity $\iota_w \circ \varphi (A)= A$ for $A \in \mathcal{A}_{w,S}$ since $N_{F/\mathbb{Q}}(a) := \det (m_a)$.

(iii) This follows from {\rm (ii)}.

(iv) Since $pI \in \mathcal{A}_{w,S}^{\times}$ ($p|N$, $I$: identity matrix) maps to $p^n \in \mathbb{Z}[1/N]^{\times}$, the determinant map in {\rm (iii)} has a finite cokernel. Then {\rm (iv)} follows from {\rm (iii)} and Dirichlet's unit theorem for orders in number fields.
\end{proof} 

In the following we describe the arithmetic object $\Gamma_{w,S}$ through the geometric periods of the Heegner object $\varpi_S$ associated to $w$.
 
\subsection{Periodicity}\label{sectper}
 Let $\varpi_S$ be the ($S$-)Heegner object associated to a basis $w$ of $F$ over $\mathbb{Q}$ (with any index $(e_p)_{p \in S^{\infty}}$) as in Section \ref{defheeg}. We keep the notations above.  
 Let $\pi : \mathfrak{h} _S^n \rightarrow \Gamma _S \backslash \mathfrak{h} _S^n $ be the natural projection. 
  In this section, we discuss the periodicity of $\varpi _S$ under the action of $\Gamma_S$ on $\mathfrak{h}_S^n$. In other words, we investigate the nature of the Heegner object composed with the projection, $\pi \circ \varpi_S : T_S^1 \rightarrow \Gamma_S\backslash \mathfrak{h}_S^n$.

 We first define the notion of periods of the Heegner object $\varpi_S$ (with respect to $\Gamma_S$).
\begin{dfn}\label{periods}
A pair $(A,\rho)$ of $A \in \Gamma_S$ and $\rho \in T_S^1$ is said to be a period of $\varpi_S$ (with respect to $\Gamma_S$) if it satisfies
$$A\varpi_S (t) = \varpi_S (\rho t),\  \forall t \in T_S^1.$$
 Then, we simply say $A \in \Gamma_S$ (resp. $\rho \in T_S^1$) is a period of $\varpi_S$ assuming the existence of $\rho \in T_S^1$ (resp. $A \in \Gamma_S$) such that $(A,\rho)$ is a period.

We denote by $\Gamma_{\varpi_S} \subset \Gamma_S$ (resp. $\Lambda_{\varpi_S} \subset T_S^1$) the set of periods $A \in \Gamma_S$ (resp. $\rho \in T_S^1$) of $\varpi_S$. It is clear that they are both subgroups.
\end{dfn} 

Then the Heegner object composed with the projection, $\pi \circ \varpi_S $, factors through $T_S^1 / \Lambda_{\varpi_S}$, i.e.
$$
\begin{tabular}{ccc}
$T_S^1$&$\overset{\varpi _S}{\longrightarrow}$&$\mathfrak{h} _S^n$\\
$\downarrow$&$\circlearrowright$&$\downarrow \pi$\\
$T_S^1 / \Lambda_{\varpi_S}$&$\dashrightarrow$&$\Gamma _S \backslash \mathfrak{h} _S^n.$
\end{tabular}
$$

 For $A\in \Gamma_{\varpi_S}$, Proposition \ref{heeginj} assures that there exists a unique $\rho \in \Lambda_{\varpi_S}$ such that $(A,\rho)$ is a period. Thus we define a group homomorphism $\psi= \psi_{\varpi_S}: \Gamma_{\varpi_S} \rightarrow \Lambda_{\varpi_S} \subset T_S^1$ so that $(A, \psi(A))$ is a period of $\varpi_S$.

 We will show that, in fact, the period group $\Gamma_{\varpi_S}$ coincides with the unit group $\Gamma_{w,S} \simeq \mathcal{O}_{w,S}^1$.
 
Let $L: F \rightarrow T_S^1; \epsilon \mapsto (|v(\epsilon)|_p)_{v|p\in S}$ be the homomorphism taking absolute values, and denote also by $L$ the composition $L\circ \varphi: \Gamma_{w,S} \overset{\varphi}{\overset{\sim}{\rightarrow}} \mathcal{O}_{w,S}^1\subset F \overset{L}{\rightarrow} T_S^1$. Here, if $p=\infty$, then $|~|_{\infty}$ is the usual absolute value of $\mathbb{C}$, and if $p \in S^{\infty}$, then $|~|_p$ is the normalized $p$-adic absolute value: $|p|_p=p^{-1}$. 

\begin{prop}\label{unitisperiod}
 Let $A \in \Gamma_{w,S}$, and set $\epsilon := \varphi(A) \in F$. Then, 
  $$A \varpi _S (t) = \varpi _S (L(\epsilon)t),\ \  \forall t \in T_S^1.$$
 In other words, we have $\Gamma_{w,S} \subset \Gamma_{\varpi_S}$, and $L\circ \varphi = \psi$ on $\Gamma_{w,S}$.
\end{prop}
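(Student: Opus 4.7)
The plan is to check the identity $A\varpi_S(t) = \varpi_S(L(\epsilon)t)$ separately at each place of $S$, using that $\varpi_S = \prod_{v \in S}\varpi_v$, that the left action of $A \in \Gamma_S$ on $\mathfrak{h}_S^n$ is coordinate-wise, and that $L(\epsilon) = (L(\epsilon)_v)_v$ scales each factor independently. The central input is that $A \in M_n(\mathbb{Z}[1/N])$ has rational entries, so applying any archimedean embedding $\sigma_i$ or $p$-adic embedding $v_i$ to the relation $Aw = \epsilon w$ leaves $A$ unchanged and yields
\[A \cdot \sigma_i(w) = \sigma_i(\epsilon)\sigma_i(w)\ \text{in }F_{\sigma_i}^n,\qquad A \cdot v_i(w) = v_i(\epsilon)v_i(w)\ \text{in }F_{v_i}^n.\]
Under the fixed identifications $F_{\sigma_i} \simeq \mathbb{R}^{n_i}$ and $F_{v_i} \simeq \mathbb{Q}_p^{n_{v_i}}$, this translates the left action of $A$ on the defining $n \times n_i$ (resp.\ $n \times n_{v_i}$) matrix $\sigma_i(w)$ (resp.\ $v_i(w)$) into right multiplication by the matrix $M_i$ representing multiplication by $\sigma_i(\epsilon)$ (resp.\ $v_i(\epsilon)$) with respect to that identification. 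Concatenating across $i$ and using that the block-scalar $t$-dependence commutes with $\mathrm{diag}(M_1, M_2, \dots)$, the local claim reduces to showing that the block-diagonal matrix with $i$-th block
\[|\sigma_i(\epsilon)|^{-1} M_i\quad\text{(archimedean)}\quad\text{or}\quad |v_i(\epsilon)|_p M_i\quad(p\text{-adic})\]
lies in $\mathbb{R}^\times O(n)$ or $\mathbb{Q}_p^\times GL_n(\mathbb{Z}_p)$ respectively.

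At the archimedean place this is handled block by block. Each real block is $|\sigma_i(\epsilon)|^{-1}\sigma_i(\epsilon) = \pm 1 \in O(1)$, and each complex block, writing $\sigma_i(\epsilon) = a + bi$ under the convention $\mathbb{C} \simeq \mathbb{R}^2;\ x+iy \mapsto (y,x)$, is $\tfrac{1}{\sqrt{a^2+b^2}}\bigl(\begin{smallmatrix}a & -b \\ b & a\end{smallmatrix}\bigr) \in SO(2)$. Thus the whole block-diagonal matrix lies in $O(n)$, and equality holds in $\mathfrak{h}^n$.

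At a finite place $p \in S^\infty$, I use that $p$ is unramified in $F$, so $p$ itself is a uniformizer of each $F_{v_i}$. Writing $v_i(\epsilon) = p^{a_i}u_i$ with $u_i \in \mathcal{O}_{v_i}^\times$ gives $|v_i(\epsilon)|_p = p^{-a_i}$ and $M_i = p^{a_i}N_i$, where $N_i \in GL_{n_{v_i}}(\mathbb{Z}_p)$ since $u_i$ acts as a $\mathbb{Z}_p$-automorphism of $\mathcal{O}_{v_i} \simeq \mathbb{Z}_p^{n_{v_i}}$. The powers of $p$ cancel, so each block $|v_i(\epsilon)|_p M_i = N_i \in GL_{n_{v_i}}(\mathbb{Z}_p)$, and the full block-diagonal matrix lies in $GL_n(\mathbb{Z}_p)$. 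Combining all places gives $A\varpi_S(t) = \varpi_S(L(\epsilon)t)$, and the remaining assertions $\Gamma_{w,S} \subset \Gamma_{\varpi_S}$ and $L\circ\varphi = \psi$ on $\Gamma_{w,S}$ follow from the uniqueness of $\psi$ supplied by Proposition \ref{heeginj}. The argument is essentially routine once the identifications and the commutativity of block-scalar with block-diagonal are pinned down; the only mild subtlety is that the sign of the exponent on $L(\epsilon)_v$ in the reduction above flips between the two cases, because $\varpi_\infty$ uses the parameter $t$ itself while $\varpi_p$ uses $t^{-1}$.
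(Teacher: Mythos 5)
Your proof is correct and follows essentially the same route as the paper's: work place by place, use the rationality of $A$ to pass from $Aw=\epsilon w$ to $A\,\sigma_i(w)=\sigma_i(\epsilon)\,\sigma_i(w)$ (resp.\ $A\,v_i(w)=v_i(\epsilon)\,v_i(w)$), and absorb the absolute value of $\sigma_i(\epsilon)$ (resp.\ $v_i(\epsilon)$) into the parameter while the remaining block lands in $O(n_i)$ (resp.\ $GL_{n_{v_i}}(\mathbb{Z}_p)$, using unramifiedness exactly as you do). The one step the paper makes explicit that you leave implicit is the verification that the rescaled parameter $L(\epsilon)t$ actually lies in the domain $T_S^1$, i.e.\ that $\prod_{v\mid p}|v(\epsilon)|_p^{n_v}=|N_{F/\mathbb{Q}}(\epsilon)|_p=|\det A|_p=1$ for each $p\in S$.
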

\begin{proof}
(1) At $\infty$, we get (in $\mathfrak{h}^n = GL_n(\mathbb{R}) / \mathbb{R}^{\times} O(n)$)
 \begin{eqnarray*}
A \varpi_{\infty} (t_1, \cdots, t_r) &=& [t_1 A w^{(1)}, \cdots, t_r A w^{(r)} ] \\ 
													  &=& [t_1 \epsilon ^{(1)} w^{(1)}, \cdots, t_r \epsilon^{(r)} w^{(r)} ] \\ 
													  &=& [t_1 |\epsilon ^{(1)}| w^{(1)}, \cdots, t_r |\epsilon^{(r)}| w^{(r)} ] \\ 
													  &=& \varpi_{\infty} (|\epsilon ^{(1)}|t_1, \cdots, |\epsilon ^{(r)}|t_r).
 \end{eqnarray*}
In the third equality, we use the fact that, for $\lambda \in \mathbb{C}^{\times}$ and $z=x+iy \in \mathbb{C}$ identified with $(y,x) \in \mathbb{R}^2$, there exists $R \in O(2)$ such that $\lambda (y,x) = \lambda z = |\lambda| (y,x)R$, i.e. complex multiplication decomposes into scaling and rotation. In the fourth equality we use $\prod_{\text{real}} |\epsilon ^{(i)}| \prod_{\text{complex}} |\epsilon ^{(i)}|^2=|N_{F/\mathbb{Q}}(\epsilon)|=|\det A| =1$.

(ii) At a finite place $p$, we get (in $\mathfrak{h}^n_p = GL_n(\mathbb{Q}_p) / \mathbb{Q}_p^{\times} GL_n(\mathbb{Z}_p)$)
 \begin{eqnarray*}
 A \varpi _p (t_1, \cdots, t_g) &=& [p^{e_1}t_1^{-1} v_1(A w), \cdots, p^{e_g}t_g^{-1} v_g(A w) ] \\ 
													  &=& [p^{e_1}t_1^{-1} v_1(\epsilon) v_1( w), \cdots, p^{e_g}t_g^{-1} v_g(\epsilon)v_g(w) ] \\ 
													  &=& [p^{e_1}t_1^{-1} |v_1(\epsilon)|_p^{-1} v_1w, \cdots, p^{e_g}t_g^{-1} |v_g(\epsilon)|_p^{-1} v_g(w) ] \\ 
													  &=& \varpi _p (|v_1(\epsilon)|_pt_1, \cdots, |v_g(\epsilon)|_pt_g).
 \end{eqnarray*}
 In the third equality we use the fact that, for $\lambda \in F_v^{\times}$ and $z \in F_v$ identified with an element in $\mathbb{Q}_p^{n_v} \simeq F_v $ (fixed in Section \ref{defheeg}), there exists $R \in GL_{n_v}(\mathbb{Z}_p)$ such that $\lambda z = |\lambda|_p^{-1} z R$. Here we need the assumption that $F/ \mathbb{Q}$ is unramified at $p$. In the fourth equality we use $\prod |v_i(\epsilon)|_p^{n_{v_i}} =|N_{F/\mathbb{Q}}(\epsilon)|_p=|\det A|_p =1$. 
\end{proof}

Next, we prove the following converse to Proposition \ref{unitisperiod}.

\begin{prop}\label{periodisunit}
Let $\varpi_S : T_S^1 \rightarrow \mathfrak{h}_S^n$ be the $S$-Heegner object associated to $w, (e_p)_p$ as in Definition \ref{dfnheeg}. Let $V \subset T_S^1$ be a subgroup satisfying the following condition: 
 there exists an archimedean place $\sigma_l$ such that for any $\epsilon \in \mathbb{R}_{>0}$, there exists $(t_{pv})_{p \in S,v|p} \in V$ such that $t_{\infty \sigma_i}/t_{\infty \sigma_l}<\epsilon$, $\forall i \neq l$. 

 If $A \in \Gamma_S$ satisfies 
 \begin{align*} A \varpi _S (t) = \varpi _S (\rho t),\ \  \forall t \in V, \tag*{$\diamondsuit$}
 \end{align*} 
 for some $\rho \in T_S^1$, then $A \in \Gamma_{w,S}$ and $\rho =L \circ \varphi(A)$.
\end{prop}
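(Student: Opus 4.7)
The plan is to unpack the identity $A\varpi_S(t)=\varpi_S(\rho t)$ into matrix equations at each place, exploit the archimedean hypothesis on $V$ via a limit argument, and globalize using injectivity of $\sigma_l\colon F\hookrightarrow F_{\sigma_l}$. First I would evaluate at $t=1\in V$: with $W=(w^{(1)}\cdots w^{(r)})$ and $I(t)=\mathrm{diag}(t_iI_{n_i})$ as in the proof of Proposition~\ref{heeginj}, the archimedean identity gives $AW=WI(\rho_\infty)\lambda_0R_0$ for some $\lambda_0\in\mathbb{R}^\times$, $R_0\in O(n)$. Setting $Q:=I(\rho_\infty)^{-1}W^{-1}AW=\lambda_0R_0$ and comparing determinants (using $\det A=1$ and $\prod_i\rho_{\infty,i}^{n_i}=1$) forces $\lambda_0\in\{\pm1\}$. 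For a general $t\in V$ the same unpacking gives $\lambda_0R_0=\lambda(t)I(t)R(t)I(t)^{-1}$ with $\lambda(t)\in\{\pm1\}$, $R(t)\in O(n)$; orthogonality of $R(t)$ and a short computation then yield $R_0\,I(t)^2\,R_0^T=I(t)^2$ for every $t$ in the archimedean projection $V_\infty$ of $V$. Analogous identities hold at each finite $p\in S^\infty$.

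Next I would exploit the spread hypothesis. Writing $I(t)^2=\sum_i t_i^2E_i$ with $E_i$ the orthogonal projector onto the $i$-th block $V_i\subset\mathbb{R}^n$, I would use the hypothesis to pick a sequence $t^{(k)}\in V$ with $t^{(k)}_i/t^{(k)}_l\to 0$ for $i\neq l$, divide the identity above by $(t^{(k)}_l)^2$, and pass to the limit to obtain $R_0E_lR_0^T=E_l$. Hence $R_0$ preserves $V_l$ and $V_l^\perp$, and consequently $A$ preserves the real subspace $WV_l\subset\mathbb{R}^n$ and acts there by $w^{(l)}\mapsto w^{(l)}(\rho_{\infty,l}\lambda_0R_{0,l})$ with $R_{0,l}\in O(n_l)$. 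The hard part is then to recognize this as multiplication by some $\epsilon^{(l)}\in F_{\sigma_l}$: the case $\sigma_l$ real is automatic ($n_l=1$), while for complex $\sigma_l$ one must rule out reflections in $O(2)\setminus SO(2)$; I plan to handle this by combining $\det A=1$ with the block decomposition and the analogous identities obtained at each finite place.

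Once $Aw^{(l)}=\epsilon^{(l)}w^{(l)}$ in $F_{\sigma_l}^n$ is in hand, I would globalize as follows. Writing $Aw=c=(c_1,\ldots,c_n)^T\in F^n$ (which makes sense because $A$ has rational entries and $w\in F^n$) and setting $\epsilon:=c_1/w_1\in F$, each difference $c_jw_1-c_1w_j\in F$ has vanishing image under $\sigma_l$; by injectivity of $\sigma_l\colon F\hookrightarrow F_{\sigma_l}$ we conclude $c_jw_1=c_1w_j$ in $F$, hence $Aw=\epsilon w$ globally. This exhibits $A\in\mathcal{A}_{w,S}$ with $\varphi(A)=\epsilon$, and $\det A=1$ upgrades this to $A\in\Gamma_{w,S}$ by Proposition~\ref{orderunits}. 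Finally, Proposition~\ref{unitisperiod} gives $A\varpi_S(t)=\varpi_S(L\circ\varphi(A)t)$; comparing with the hypothesis and invoking injectivity of $\varpi_S$ (Proposition~\ref{heeginj}) yields $\rho=L\circ\varphi(A)$.
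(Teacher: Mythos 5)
Your overall route is the same as the paper's: unpack $\diamondsuit$ at the archimedean place into $AWI(t)=WI(\rho t)\lambda(t)R(t)$ with $R(t)\in O(n)$, use orthogonality of $R(t)$ together with the hypothesis on $V$ to let $t_i/t_l\to 0$ and isolate the $\sigma_l$-block, and then globalize via injectivity of $\sigma_l$ on $F$. The paper expands $R(t)\,{}^t\!R(t)=I_n$ into block equations in $B_{ij}=(W^{-1}AW)_{ij}$ and passes to the limit to get $B_{jl}=0$ for $j\neq l$ and $B_{ll}\in\rho_l O(n_l)$; your conjugated identity $R_0I(t)^2\,{}^t\!R_0=I(t)^2$ together with the projectors $E_i$ yields the same conclusion, and your final globalization ($\sigma_l(c_jw_1-c_1w_j)=0$ implies $Aw=\epsilon w$, then Propositions \ref{unitisperiod} and \ref{heeginj} give $\rho=L\circ\varphi(A)$) matches the paper's. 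Those parts are fine.

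The genuine gap is exactly the step you defer. When $\sigma_l$ is complex you must show that right multiplication of $w^{(l)}$ by $B_{ll}/\rho_l\in O(2)$ is multiplication by an element of $F_{\sigma_l}^{\times}$; under $x+iy\mapsto(y,x)$ only $SO(2)$ corresponds to $z\mapsto\lambda z$, while $O(2)\setminus SO(2)$ corresponds to $z\mapsto\lambda\bar{z}$, i.e.\ to $Aw^{(l)}=\lambda\overline{w^{(l)}}$, from which $Aw=\epsilon w$ does not follow. Your proposed fix (``combine $\det A=1$ with the block decomposition and the finite places'') cannot work: $\det A=1$ only controls the product $\det(R_0|_{V_l})\det(R_0|_{V_l^{\perp}})$, not each factor, and for $S=\{\infty\}$ there are no finite places to invoke. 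Worse, the reflection case actually occurs: for $F=\mathbb{Q}(\zeta_5)$ and $w={}^t\!(1,\zeta,\zeta^2,\zeta^3)$, the matrix $A\in SL_4(\mathbb{Z})$ of the automorphism $\tau$ of $F$ induced by complex conjugation satisfies $Aw^{(i)}=\overline{w^{(i)}}$ for $i=1,2$, hence $A\varpi(t)=\varpi(t)$ for all $t$ (conjugating both complex blocks is right multiplication by $\mathrm{diag}(-1,1,-1,1)\in O(4)$), yet $Aw=\tau(w)$ is not of the form $\lambda w$, so $A\notin\Gamma_{w}$. So the deferred step is not a technicality but an obstruction; be aware that the paper's own proof silently writes $w^{(l)}B_{ll}=\lambda w^{(l)}$ at this very point and thus has the same problem. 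When $\sigma_l$ is real ($n_l=1$) both your argument and the paper's are complete; to salvage the complex case one must either add a hypothesis forcing $\det(R_0|_{V_l})>0$ or weaken the conclusion to $Aw=\mu\,\tau(w)$ with $\tau$ induced by complex conjugation.
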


\begin{proof}
 Let $\varpi_{\infty}$ be the archimedean part of the Heegner object $\varpi_S$. We focus only on the archimedean part. We write $t_i$ for $t_{\infty \sigma_i}$ as before. Set $V_{\infty} := \{(t_{\infty v}) \in (\prod_{v|\infty}\mathbb{R}_{>0})^1\ |\ (t_{pv})_{p \in S, v|p } \in V\}$, and $\rho=(\rho_i)_i :=(\rho_{\infty \sigma_i})_i \in (\prod_{v|\infty}\mathbb{R}_{>0})^1$.
 
 As in the proof of Proposition \ref{heeginj}, set $W := (w^{(1)} \cdots w^{(r)}) \in GL_n(\mathbb{R})$, so that $\varpi (t) =\varpi_{\infty} (t_1,\dots, t_r) = [ W
I(t)
]$, where $I(t)=I(t_1,\dots, t_r) :=
 \left(
		\begin{array}{ccc}
		t _1I_{n_1}&& \\
		&\ddots& \\
		&&t_rI_{n_r}
		\end{array}
\right)$. Then, by the archimedean part of the identity $\diamondsuit$, there exists a map $R : V_{\infty} \rightarrow O(n)$ such that 
$AWI(t) = WI(\rho t) R(t)$ for all $t \in V_{\infty}$. In particular, $R(t) = I(\rho ^{-1})I(t^{-1})W^{-1}AWI(t) \in O(n)$. Define $B_{ij} \in M_{n_i n_j}(\mathbb{R})$ by $W^{-1}AW = (B_{ij})_{ij}$. Since $R(t){}^t\!R(t) = I_n$, we have
$$
\left\{ 
\begin{array}{ll}
i= j \Rightarrow		 & \sum_{k=1}^{r}\frac{t_k^2}{\rho_i^2 t_i^2}B_{ik}{}^t\!B_{ik} = I_{n_i}\\
i\neq j \Rightarrow & \sum_{k=1}^{r}\frac{t_k^2}{\rho_i \rho_j  t_i t_j}B_{ik}{}^t\!B_{jk} = 0.
\end{array}\right.
$$
By the condition, we can take $t_k/t_l (k\neq l)$ arbitrarily small. Therefore, by setting $i=l$ and separating the terms $k=l$ and $k\neq l$, we conclude 
$B_{ll}{}^t\!B_{ll} =\rho_l^2I_{n_l}$ and $B_{ll}{}^t\!B_{jl} = 0$ for all $j \neq l$. In particular, $B_{ll}/\rho_l \in O(n_l)$ and $B_{jl}=0$ for $j \neq l$. Then we have $A w^{(l)} = \rho_l w^{(l)}B_{ll}/\rho_l = \lambda w^{(l)}$ for some $\lambda \in F_{\sigma _l}^{\times}$. Here, if $\sigma_l$ is a complex place, we use the fixed identification $F_{\sigma _l}=\mathbb{C} \simeq \mathbb{R}^2$. Therefore $A \in \Gamma_{w,S}$ since $\lambda \in \sigma_l (F)$ is automatic. By Proposition \ref{unitisperiod}, we see $\rho =L \circ \varphi (A)$.
\end{proof}

The condition in Proposition \ref{periodisunit} is always satisfied for $V=T_S^1$ by taking $\sigma _1$ for $\sigma_l$. Thus,
\begin{cor}\label{corunitperiod}
We have $\Gamma_{w,S}= \Gamma_{\varpi_S}$ and $L \circ \varphi =\psi$. In particular, $\Lambda_{\varpi_S} = L(\mathcal{O}_{w,S}^1)$. \qed
\end{cor}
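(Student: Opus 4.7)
The plan is to combine Propositions \ref{unitisperiod} and \ref{periodisunit} directly. The first proposition already yields $\Gamma_{w,S} \subset \Gamma_{\varpi_S}$ together with the identity $\psi = L \circ \varphi$ on $\Gamma_{w,S}$, so the whole task is to establish the reverse inclusion $\Gamma_{\varpi_S} \subset \Gamma_{w,S}$.

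For that reverse inclusion I would apply Proposition \ref{periodisunit} to $V = T_S^1$, for which the only non-formal step is to verify the hypothesis on $V$. I would take $\sigma_l = \sigma_1$; if $r = 1$ the condition is vacuous, and otherwise I would construct, for each $\epsilon > 0$, an element $t \in T_S^1$ with $t_{\infty\sigma_i}/t_{\infty\sigma_1} < \epsilon$ for all $i \neq 1$ by taking trivial components at the finite places of $S$ and, at the archimedean part, setting $t_1 = \lambda$ and $t_i = \lambda^{-n_1/(n-n_1)}$ for $i \geq 2$. This tuple lies in $(\prod_{v\mid\infty}\mathbb{R}_{>0})^1$ because $\prod_i t_i^{n_i} = 1$, and letting $\lambda \to \infty$ makes each ratio $t_i/t_1$ arbitrarily small. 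Proposition \ref{periodisunit} then simultaneously supplies $\Gamma_{\varpi_S} \subset \Gamma_{w,S}$ and extends $\psi = L\circ\varphi$ to all of $\Gamma_{\varpi_S}$.

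For the last assertion $\Lambda_{\varpi_S} = L(\mathcal{O}_{w,S}^1)$, I would observe that $\psi$ is surjective onto $\Lambda_{\varpi_S}$ by the very definition of the latter: every period $\rho$ is $\psi(A)$ for some $A \in \Gamma_{\varpi_S}$. Hence $\Lambda_{\varpi_S} = \psi(\Gamma_{\varpi_S}) = L\circ\varphi(\Gamma_{w,S}) = L(\mathcal{O}_{w,S}^1)$, the last equality coming from the isomorphism $\varphi \colon \Gamma_{w,S} \xrightarrow{\sim} \mathcal{O}_{w,S}^1$ of Proposition \ref{orderunits}(iii). There is essentially no obstacle at this stage; the substantive content was already carried by the two preceding propositions, and the corollary is a formal consequence of specializing Proposition \ref{periodisunit} to $V = T_S^1$.
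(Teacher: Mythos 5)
Your proposal is correct and follows exactly the paper's route: the paper likewise deduces the corollary by applying Proposition \ref{periodisunit} with $V = T_S^1$ and $\sigma_l = \sigma_1$, combined with Proposition \ref{unitisperiod}. Your explicit verification of the hypothesis (with $t_1=\lambda$, $t_i=\lambda^{-n_1/(n-n_1)}$) is a correct elaboration of what the paper leaves as a one-line remark.
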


 The next theorem is a generalization of geodesic Lagrange's theorem.
\begin{thm}\label{perHeeg} 
The quotient $T_S^1 / \Lambda_{\varpi_S}$ is compact with its natural topology. In particular, the image of $\pi \circ \varpi _S$ in $SL_n(\mathbb{Z}[1/N]) \backslash \mathfrak{h}_S^n$ is compact. 
\end{thm}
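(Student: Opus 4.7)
The plan is to identify $\Lambda_{\varpi_S}$ with $L(\mathcal{O}_{w,S}^{1})$ via Corollary \ref{corunitperiod} and then reduce to Dirichlet's $S$-unit theorem by enlarging $T_S^1$ to a connected real Lie group in which the lattice structure of $L(\mathcal{O}_{w,S}^{1})$ becomes visible. First I would define
\[
\tilde{T}_S \;:=\; \Bigl(\prod_{v\mid\infty}\mathbb{R}_{>0}\Bigr)^{1} \times \prod_{p\in S^{\infty}} \Bigl(\prod_{v\mid p}\mathbb{R}_{>0}\Bigr)^{1},
\]
where each $(\prod_{v\mid p}\mathbb{R}_{>0})^{1}$ is cut out by $\prod_{v\mid p} t_v^{n_v}=1$. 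Then $T_S^1$ sits as a closed subgroup of $\tilde{T}_S$, and the quotient $\tilde{T}_S/T_S^1 \cong \prod_{p\in S^{\infty}}(\prod_{v\mid p}\mathbb{R}_{>0})^{1}/(\prod_{v\mid p}p^{\mathbb{Z}})^{1}$ is a finite product of compact tori, hence compact.

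Next, passing to componentwise logarithms identifies $\tilde{T}_S$ with the $\mathbb{R}$-vector space
\[
H \;:=\; \bigoplus_{p\in S}\Bigl\{x\in \mathbb{R}^{g_p}\ \Big|\ \textstyle\sum_{v\mid p}n_v x_v=0\Bigr\}
\]
of dimension $\sum_{p\in S}(g_p-1)$. Since $N_{F/\mathbb{Q}}(\epsilon)=1$ for every $\epsilon\in\mathcal{O}_{w,S}^{1}$, we have $\prod_{v\mid p}|v(\epsilon)|_p^{n_v}=1$ at each $p\in S$, so $\log L(\mathcal{O}_{w,S}^{1})$ lies in $H$. The kernel of $L$ on $\mathcal{O}_{w,S}^{1}$ consists of elements with absolute value $1$ at every place, hence is torsion, so by Proposition \ref{orderunits}\,(iv) the group $L(\mathcal{O}_{w,S}^{1})$ is free of rank $\sum_{p\in S}(g_p-1)$. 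Dirichlet's $S$-unit theorem applied to $\mathcal{O}_F[1/N]^{\times}$ (to which $\mathcal{O}_{w,S}^{\times}$ is commensurable) then implies that $\log L(\mathcal{O}_{w,S}^{1})$ is discrete in $H$, and with matching rank and dimension it is therefore a full-rank lattice.

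It follows that $\tilde{T}_S/L(\mathcal{O}_{w,S}^{1})$ is a compact torus. In the exact sequence
\[
1 \longrightarrow T_S^1/L(\mathcal{O}_{w,S}^{1}) \longrightarrow \tilde{T}_S/L(\mathcal{O}_{w,S}^{1}) \longrightarrow \tilde{T}_S/T_S^1 \longrightarrow 1,
\]
the group $T_S^1/\Lambda_{\varpi_S}$ appears as the kernel of a continuous homomorphism between compact groups, and is thus compact. Since $\pi\circ\varpi_S$ factors continuously through $T_S^1/\Lambda_{\varpi_S}$, its image in $\Gamma_S\backslash\mathfrak{h}_S^n$ is a continuous image of a compact space and is therefore compact. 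The most delicate step is the lattice count in the middle paragraph: it requires the precise rank formula of Proposition \ref{orderunits}\,(iv) together with the standard discreteness of the $S$-unit log embedding, and everything else is essentially formal from there.
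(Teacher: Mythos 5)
Your argument is correct, but it takes a genuinely different route from the paper. The paper's proof is ad\'elic: it introduces the open subgroup $V=\{x\in\mathbb{A}_F^1\mid \prod_{v\mid p}|x_v|_v=1\ \forall p\in S,\ |x_v|_v=1\ \forall v\nmid S\}$ of the norm-one id\`ele group, invokes compactness of $\mathbb{A}_F^1/F^{\times}$ to get compactness of $V/(V\cap F^{\times})$, notes that $\mathcal{O}_{w,S}^1$ has finite index in $V\cap F^{\times}=\mathcal{O}_F[1/N]^1$, and pushes everything forward along the surjection $V\to T_S^1$; in particular it never needs the rank formula, and (as the paper remarks) only the inclusion $\Gamma_{w,S}\subset\Gamma_{\varpi_S}$ rather than the full Corollary \ref{corunitperiod}. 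You instead run the classical logarithmic-embedding argument: enlarge $T_S^1$ to the real vector group $\tilde{T}_S\cong H$, check that $\Lambda_{\varpi_S}=L(\mathcal{O}_{w,S}^1)$ is discrete (from the $S$-unit theorem) and of rank $\sum_{p\in S}(g_p-1)=\dim H$ (from Proposition \ref{orderunits}\,(iv) plus the fact that $\ker L$ is torsion, which uses that elements of $\mathcal{O}_{w,S}^{\times}$ are $S$-units so their absolute values away from $S$ are $1$), hence a full lattice, and then descend via the compact quotient $\tilde{T}_S/T_S^1$. Both proofs rest on the same deep input, but yours is more explicit: it exhibits $T_S^1/\Lambda_{\varpi_S}$ as a closed subgroup of a real torus and makes the rank bookkeeping visible, at the cost of needing Proposition \ref{orderunits}\,(iv) as an essential ingredient rather than a corollary. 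It is worth noting that your method is essentially the one the paper itself deploys later for the $\chi$-component version (Theorem \ref{chiperiodicity}), where the log embedding into $V=\{(x_v)\mid \sum_{v\mid p}n_vx_v=0\}$ and the lattice argument appear explicitly; so your proof unifies the two cases, whereas the paper's ad\'elic argument is specific to the untwisted statement.
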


\begin{proof}
Let $\mathbb{A}_F^{\times}$ be the group of id\`eles of $F$. Let $\mathbb{A}^1_F=\{x=(x_v)_v\in \mathbb{A}^{\times}_F\ |\ \prod_v|x_v|_v=1\} \supset V=\{x=(x_v)_v \in \mathbb{A}_F  |\ \prod_{v|p}|x_v|_v=1,\forall p\in S,\ |x_v|_v=1,\forall v|p \not\in S\}$ be its subgroups. Here, for a place $p$ of $\mathbb{Q}$ and a place $v|p$ of $F$, $|x_v|_v:= |x_v|_p^{n_v}$, where $n_v=[F_v:\mathbb{Q}_p]$.
We claim that $V$ is an open subgroup of $\mathbb{A}^1_F$. This is because, for a prime $p \in S^{\infty}$, $\prod_{v|p}|x_v|_v$ takes value in a discrete subgroup of $\mathbb{R}_{>0}$, hence $\prod_{v|p}|x_v|_v=1$ for all $p\in S$ and $|x_v|_v=1$ for all $v|p \not\in S$ is an open condition.
Since $\mathbb{A}_F^1/F^{\times}$ is compact group and $V$ is an open subgroup of $\mathbb{A}^1_F$, $V/V\cap F^{\times}$ is compact. 
Now, since $\mathcal{O}_{w,S}$ is an order in $F$ (Proposition \ref{orderunits}), $\mathcal{O}_{w,S}^1$ is a subgroup of finite index of $V\cap F^{\times}=\mathcal{O}_F[1/N]^1$, where $\mathcal{O}_F$ is the ring of integers of $F$, and $\mathcal{O}_F[1/N]^1:=\{x \in\mathcal{O}_F[1/N]^{\times}\ |\ N_{F/\mathbb{Q}}(x)=1\}$. 
On the other hand, we have a surjective homomorphism $V \rightarrow T^1_S ; (x_v)_v \mapsto (|x_v|_p)_{v|p\in S}$, and Corollary \ref{corunitperiod} implies that the image of $\mathcal{O}_{w,S}^1 \subset V\cap F^{\times}$ is $L(\mathcal{O}_{w,S}^1)=\Lambda_{\varpi_S}$. Therefore $T^1_S/ \Lambda_{\varpi_S}$ is compact.
\end{proof}

\begin{rmk}
In fact, only the inclusion $\Gamma_{w,S} \subset \Gamma_{\varpi_S}$ (Proposition \ref{unitisperiod}) is enough to prove Theorem \ref{perHeeg}.
\end{rmk}

\subsection{$\chi$-component}\label{sectchi}
 Here we consider the ``$\chi$-component" of Heegner objects for a quadratic character $\chi$. This enables us to treat the $\chi$-component of unit groups.
 
 Let $S$ be a finite set of places of $\mathbb{Q}$ including the archimedean place $\infty$. Let $F'/ \mathbb{Q}$ be an extension of degree $d$, and let $F/F'$ be a quadratic extension with Galois group $G=\Gal(F/F')$. We denote by $S^{\infty}$ the set of finite places in $S$, by $S_F$ (resp. $S_{F'}$) the set of places of $F$ (resp. $F'$) above $S$, and by $S_{F,p}$ (resp. $S_{F',p}$) the set of places of $F$ (resp. $F'$) above $p \in S$. We denote by $G_{v'}$ the decomposition group of $v' \in S_{F'}$ in $F/F'$. We assume that $F/\mathbb{Q}$ is unramified at every finite place $p \in S^{\infty}$, and at least one infinite place $v' \in S_{F',\infty}$ splits in $F/F'$.
 
  Let $w={}^t\!(w_1, \dots, w_n) \in F^n$ be a basis of $F$ over $\mathbb{Q}$, where we set $n=2d$, and let $\varpi_S : T_S^1 \rightarrow \mathfrak{h}^n_S$ be the Heegner object associated to $w$ (with any index $(e_p)_p$) as in Section \ref{defheeg}. We keep the notations $\mathcal{A}_{w,S}, \Gamma_{w,S}, \varphi , \mathcal{O}_{w,S} , \Gamma_{\varpi_S}$, etc. in Sections \ref{sectorderunits} and \ref{sectper}.

 Let $\chi : G \rightarrow \{\pm 1\}$ be the non-trivial character. Recall that, for a $\mathbb{Z}[G]$-module $M$, we define its $\chi$-component as $M^{\chi} := \{m \in M\ |\ s(m) = \chi (s) m \}$.
 
 \begin{dfn}\label{defchiheeg}~
 \begin{enumerate}[(i)]
 \item The Galois group $G$ acts naturally on $T_S^1 = (\prod_{v \in S_{F,\infty}}\mathbb{R}_{>0})^1\times \prod_{p\in S^{\infty}}(\prod_{v \in S_{F,p}}p^{\mathbb{Z}})^1$ by permuting the indices $v \in S_F$. The $\chi$-component $(T_S^1)^{\chi}$ of $T_S^1$ is defined by
  $$(T_S^1)^{\chi} = \{t=(t_{v})_{v\in S_F} \in T_S^1\ |\ s(t)_v(=t_{s^{-1}v})=t_v^{\chi(s)}, \forall s \in G \}.$$
  Then we define the $\chi$-component of the Heegner object as the restriction of $\varpi_S$ to $(T_S^1)^{\chi}$: 
  $$\varpi _S^{\chi}:= \varpi_S|_{(T_S^1)^{\chi}} : (T_S^1)^{\chi} \rightarrow \mathfrak{h}^n_S .$$ 
\item Let $N:= \prod_{p\in S,p \neq \infty} p$ as in Section \ref{sectorderunits}. Define $\Gamma_{\varpi_S}(\chi)$ as, 
 $$\Gamma_{\varpi_S}(\chi) := \{A \in SL_n(\mathbb{Z}[1/N]) \ |\ \exists \rho \in (T_S^1)^{\chi}, A\varpi _S^{\chi}(t) = \varpi _S^{\chi}(\rho t) , \forall t \in (T_S^1)^{\chi}  \}.$$
\end{enumerate}
\end{dfn}
 The following proposition asserts that $\Gamma_{\varpi_S}(\chi)$ is actually the $\chi$-component of $\mathcal{O}_{w,S}^{1}\simeq \Gamma_{w,S}$ modulo torsion part. For an additive group $M$, we denote by $M_{\rm tor}$ the subgroup of torsion elements, and denote by $M/({\rm tor})$ the quotient of $M$ by $M_{\rm tor}$.
\begin{prop}\label{chiunitperiod}
\begin{enumerate}[{\rm (i)}]
\item $(\Gamma_{w,S})_{\rm tor} \subset \Gamma_{\varpi_S}(\chi) \subset \Gamma_{w,S}$.
\item The following diagram commutes: 
$$
\begin{tabular}{ccc}
$\Gamma_{w,S}/({\rm tor})$&$\overset{\varphi}{\overset{\sim}{\longrightarrow}}$&$\mathcal{O}_{w,S}^1/({\rm tor})$ \\
\rotatebox{90}{$\subset$}&&\rotatebox{90}{$\subset$} \\
$\Gamma_{\varpi_S}(\chi)/({\rm tor})$&$\overset{\sim}{\longrightarrow}$&$(\mathcal{O}_{w,S}^1/({\rm tor}))^{\chi}.$\\
\end{tabular}
$$
Note that $\mathcal{O}_{w,S}^1/({\rm tor})$ has a natural $\mathbb{Z}[G]$-module structure, since the subgroup of torsion elements is $G$-stable.
\end{enumerate}
\end{prop}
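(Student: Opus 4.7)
The plan is to reduce both statements to computations about when the period obtained from Proposition \ref{unitisperiod} lies in the $\chi$-invariant subgroup $(T_S^1)^{\chi}$. The starting observation is that Proposition \ref{unitisperiod} already gives, for $A \in \Gamma_{w,S}$, a canonical candidate for a period, namely $\rho = L(\varphi(A))$; thus $A \in \Gamma_{\varpi_S}(\chi)$ if and only if $A \in \Gamma_{w,S}$ and $L(\varphi(A)) \in (T_S^1)^{\chi}$. Once this equivalence is established, everything becomes a statement about absolute values of units.

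For the inclusion $\Gamma_{\varpi_S}(\chi) \subset \Gamma_{w,S}$, I would invoke Proposition \ref{periodisunit} with $V = (T_S^1)^{\chi}$. The hypothesis used there is the existence of an archimedean place $\sigma_l$ along which the other archimedean coordinates can be made arbitrarily small in ratio. The standing assumption that at least one infinite place $v'$ of $F'$ splits in $F/F'$ supplies this: above such a $v'$ lie two distinct places $v_1, v_2$ of $F$, and the $\chi$-condition forces $t_{v_2} = t_{v_1}^{-\chi(s)} = t_{v_1}^{-1}$ for the nontrivial $s \in G$, so we may freely let $t_{v_1} \to \infty$ while setting $t_v = 1$ at every inert archimedean place (this is forced by the $\chi$-condition anyway) and $t_{u_1} = t_{u_2} = 1$ at every other split pair. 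The norm-one constraint is automatically preserved because split pairs contribute inversely and inert places are pinned. Taking $\sigma_l = v_1$ then realizes the hypothesis of Proposition \ref{periodisunit}, which yields $A \in \Gamma_{w,S}$ together with $\rho = L(\varphi(A))$.

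For $(\Gamma_{w,S})_{\mathrm{tor}} \subset \Gamma_{\varpi_S}(\chi)$, any torsion $A$ has $\varphi(A) \in \mu(F)$, so $L(\varphi(A)) = 1 \in (T_S^1)^{\chi}$ trivially; Proposition \ref{unitisperiod} then shows $A$ is already a period of the full $\varpi_S$, in particular of its restriction $\varpi_S^{\chi}$. This settles (i). For (ii), the isomorphism $\Gamma_{w,S}/(\mathrm{tor}) \overset{\sim}{\to} \mathcal{O}_{w,S}^1/(\mathrm{tor})$ descends from Proposition \ref{orderunits} (iii), and by (i) and the preceding remark it restricts to an injection of $\Gamma_{\varpi_S}(\chi)/(\mathrm{tor})$ into $\mathcal{O}_{w,S}^1/(\mathrm{tor})$. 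It remains to identify the image with the $\chi$-component. For $\epsilon = \varphi(A)$ and the nontrivial $s \in G$, the condition $L(\epsilon) \in (T_S^1)^{\chi}$ reads $|v(s(\epsilon))|_p = |v(\epsilon)|_p^{-1}$ for every place $v | p \in S$, equivalently $|v(s(\epsilon)\epsilon)|_p = 1$ for all such $v$. Because $s(\epsilon)\epsilon$ already lies in $\mathcal{O}_{w,S}^1$ (the $\chi$-product is $G$-stable), having all $S$-absolute values equal to $1$ forces $s(\epsilon)\epsilon \in \mu(F)$, and this is exactly the statement that $\epsilon \cdot s(\epsilon)^{\chi(s)} \in (\mathrm{tor})$, i.e.\ that the class of $\epsilon$ lies in $(\mathcal{O}_{w,S}^1/(\mathrm{tor}))^{\chi}$. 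The converse implication runs identically.

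The main obstacle, and the only place where the hypotheses on the extension $F/F'$ enter substantively, is verifying the small-ratio hypothesis of Proposition \ref{periodisunit} on the restricted domain $(T_S^1)^{\chi}$; the rest is bookkeeping once one tracks that $L$ sends the nontrivial Galois action to inversion in the $\chi$-sector. Everything else reduces to Proposition \ref{unitisperiod}, Proposition \ref{periodisunit}, and the standard fact that a unit all of whose absolute values are $1$ is a root of unity.
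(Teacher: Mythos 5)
Your proposal is correct and follows essentially the same route as the paper: both inclusions in (i) come from Proposition \ref{unitisperiod} (torsion units have trivial absolute values) and from Proposition \ref{periodisunit} applied to $V=(T_S^1)^{\chi}$, whose hypothesis is checked exactly as you do via the split archimedean place and elements $(t,1/t,1,\dots,1)$. For (ii) the paper simply records that $L$ is an injective $\mathbb{Z}[G]$-map modulo torsion so that $(\mathcal{O}_{w,S}^1/({\rm tor}))^{\chi}=L^{-1}((T_S^1)^{\chi})$; your explicit unwinding via $|v(s(\epsilon)\epsilon)|_p=1$ and Kronecker's theorem is just the proof of that injectivity spelled out, not a different argument.
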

\begin{proof}
(i) For $A \in (\Gamma_{w,S})_{\rm tor}$, we have $|v(\varphi(A))|_p=1$ for all $p \in S$ and $v \in S_{F,p}$. Thus we get the first inclusion by Proposition \ref{unitisperiod}.

 Since we have assumed that at least one infinite place of $F'$ splits in $F/F'$, say $v' \in S_{F',\infty}$, the condition in Proposition \ref{periodisunit} is satisfied for $V=(T_S^1)^{\chi}$. Indeed, if we denote by $v_1,v_2 \in S_F$ the two places above $v'$, then $(T_S^1)^{\chi}$ contains elements of the form $(t_v)_{v \in S_F}$ such that $t_{v_1}=t, t_{v_2}=1/t$ and $t_{v}=1 ~(v \neq v_1,v_2)$ for $t \in \mathbb{R}_{>0}$. Thus we get the second inclusion $\Gamma_{\varpi_S}(\chi) \subset \Gamma_{w,S}$ by Proposition \ref{periodisunit}. 

(ii) Let $L: \mathcal{O}_{w,S}^1/({\rm tor}) \rightarrow T_S^1$ be the ``absolute values" map defined in Section \ref{sectper}. Taking into account that this is an injective $\mathbb{Z}[G]$-homomorphism, we see $(\mathcal{O}_{w,S}^1/({\rm tor}))^{\chi} = L^{-1}((T_S^1)^{\chi})$. On the other hand,
 by Proposition \ref{periodisunit} and the inclusion $\Gamma_{\varpi_S}(\chi) \subset \Gamma_{w,S}$, we see that
 $\Gamma_{\varpi_S}(\chi) = \{A  \in \Gamma_{w,S}\ |\ L\circ \varphi (A) \in (T_S^1)^{\chi} \} = (L \circ \varphi)^{-1} ((T_S^1)^{\chi})$. This proves (ii).
\end{proof}

Let $\Lambda = \Lambda_{\varpi_S}= L(\mathcal{O}_{w,S}^1)$ be the image of the ``absolute values" map $L$ defined in Section \ref{sectper}. 
Then $\Lambda$ is a $\mathbb{Z}[G]$-submodule of $T_S^1$. Let $\Lambda^{\chi}$ be the $\chi$-component of $\Lambda$. 
The next theorem is the periodicity result for $\varpi_S^{\chi}$.
\begin{thm}\label{chiperiodicity}
The quotient $(T_S^1)^{\chi}/\Lambda^{\chi}$ is compact. In particular, the image of $\pi \circ \varpi_S^{\chi}$ in $SL_n(\mathbb{Z}[1/N]) \backslash \mathfrak{h}_S$ is compact. 
\end{thm}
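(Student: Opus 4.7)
My plan is to deduce the compactness of $(T_S^1)^{\chi}/\Lambda^{\chi}$ from the already-established compactness of $T_S^1/\Lambda$ (Theorem \ref{perHeeg}) via the ``$(1-s)$-map'' that projects $T_S^1$ onto $(T_S^1)^{\chi}$. Writing $G=\{1,s\}$, define
\[
\phi : T_S^1 \longrightarrow (T_S^1)^{\chi}, \qquad \phi(t) = t\cdot s(t)^{-1}.
\]
A direct check shows $\phi(t)\in(T_S^1)^{\chi}$ (since $s(\phi(t))=s(t)\cdot t^{-1}=\phi(t)^{-1}$), and that $\phi(\Lambda)\subset\Lambda\cap(T_S^1)^{\chi}=\Lambda^{\chi}$ by the $G$-stability of $\Lambda$. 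Hence $\phi$ descends to a continuous group homomorphism $\bar\phi\colon T_S^1/\Lambda\to(T_S^1)^{\chi}/\Lambda^{\chi}$. The target is Hausdorff, since $\Lambda^{\chi}$ is closed in $(T_S^1)^{\chi}$, so the image of $\bar\phi$ is a compact closed subgroup of $(T_S^1)^{\chi}/\Lambda^{\chi}$.

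The remaining task is to show this image has finite index, which reduces to showing the cokernel $(T_S^1)^{\chi}/\phi(T_S^1)$ is finite. With $G$ acting multiplicatively, $(T_S^1)^{\chi}=\ker(1+s)$ and $\phi(T_S^1)=(1-s)T_S^1$, so the cokernel is precisely the Tate cohomology group $\hat H^{-1}(G,T_S^1)$. To control it, I would use the short exact sequence of $\mathbb{Z}[G]$-modules
\[
0 \to T_\infty^1 \to T_S^1 \to T_S^1/T_\infty^1 \to 0.
\]
In logarithmic coordinates, $T_\infty^1=(\prod_{v|\infty}\mathbb{R}_{>0})^1$ is a finite-dimensional real $G$-representation (with $G$ permuting coordinates), hence uniquely divisible, so $\hat H^{\bullet}(G,T_\infty^1)=0$. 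The quotient $T_S^1/T_\infty^1\simeq\prod_{p\in S^{\infty}}(\prod_{v|p}p^{\mathbb{Z}})^1$ is a finitely generated free abelian group with $G$-action, so its Tate cohomology over the cyclic group $G$ of order $2$ is finite. The long exact sequence then forces $\hat H^{-1}(G,T_S^1)$ to be finite. Consequently $(T_S^1)^{\chi}/\Lambda^{\chi}$ is a finite union of cosets of the compact subgroup $\mathrm{im}(\bar\phi)$, and hence is compact.

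For the ``In particular'' assertion, by Corollary \ref{corunitperiod} every element of $\Lambda^{\chi}\subset\Lambda_{\varpi_S}$ is a period of $\varpi_S$, so $\pi\circ\varpi_S^{\chi}$ factors through the compact quotient $(T_S^1)^{\chi}/\Lambda^{\chi}$ and its image is the continuous image of a compact set. The step I expect to be the main obstacle is the Tate cohomology computation — in particular, one must handle carefully the norm-one constraint $\prod_v t_v^{n_v}=1$ after passing to logarithmic coordinates, and verify that the $G$-action respects this constraint (which it does, since $G$ permutes places of $F$ with equal local degrees). Once the finiteness of $\hat H^{-1}(G,T_S^1)$ is in hand, the rest is routine.
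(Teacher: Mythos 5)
Your proof is correct, and it reaches the conclusion by a route that differs in its packaging from the paper's, although both arguments pivot on the same map $1-s$. The paper passes to logarithmic coordinates, places everything inside the real vector space $V=\{(x_v)_v \mid \sum_{v|p}n_vx_v=0\}$, and uses the map $e(x)=\sum_{s\in G}\chi(s^{-1})s(x)$ (the additive avatar of your $\phi$) only to verify that $\Lambda^{\chi}$ is a full lattice in $V^{\chi}$: since $\Lambda$ spans $V$, $e(\Lambda)\subset\Lambda^{\chi}$ spans $e(V^{\chi})=2V^{\chi}=V^{\chi}$. It then realizes $(T_S^1)^{\chi}/\Lambda^{\chi}$ as the intersection of the two compact sets $T_S^1/\Lambda$ and $V^{\chi}/\Lambda^{\chi}$ inside the torus $V/\Lambda$. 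You instead apply $\phi(t)=t\,s(t)^{-1}$ directly at the level of topological groups, get a compact image in $(T_S^1)^{\chi}/\Lambda^{\chi}$ for free from Theorem \ref{perHeeg}, and then identify the obstruction to surjectivity as $\hat H^{-1}(G,T_S^1)$, whose finiteness you extract by d\'evissage through the uniquely divisible archimedean factor and the finitely generated free non-archimedean factor. What your approach buys is that you never need to prove the lattice statement for $\Lambda^{\chi}$ in $V^{\chi}$ nor fuss over the identification of $(T_S^1)^{\chi}/\Lambda^{\chi}$ with an intersection inside $V/\Lambda$; what the paper's approach buys is that it avoids group cohomology entirely and yields the slightly stronger structural fact that $\Lambda^{\chi}$ is a lattice in $V^{\chi}$ (which is what makes the $\chi$-rank-one applications in Section \ref{sectapp} transparent). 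Both arguments rely on the $G$-stability of $\Lambda$, which the paper asserts and which is built into the statement via the notation $\Lambda^{\chi}$, so you are entitled to it. The only point worth making explicit in your write-up is the discreteness of $\Lambda^{\chi}$ in $(T_S^1)^{\chi}$ when you invoke Hausdorffness of the target; this follows from the discreteness of $\Lambda=L(\mathcal{O}_{w,S}^1)$, which is standard but is used silently.
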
 
\begin{proof}
We first embed $T_S^1$ into $\bigoplus_{v\in S_F} \mathbb{R}$ by $T_S^1 \hookrightarrow \bigoplus_{v\in S_F} \mathbb{R} ; (t_v)_v \mapsto (\log (t_v))_v$. We equip $\bigoplus_{v\in S_F} \mathbb{R}$ with a natural $G$-action by permuting the indices $v \in S_F$ so that this embedding is a $G$-homomorphism. Let $V:=\{(x_v)_v \in \bigoplus_{v\in S_F} \mathbb{R}\ |\ \sum_{v|p}n_v x_v=0, \forall p \in S\}$, the $G$-subspace. Then, clearly, the embedding $T_S^1 \hookrightarrow \bigoplus_{v\in S_F} \mathbb{R}$ factors through $V$. We have the following inclusions of $G$-modules:
$$
\begin{tabular}{ccccc}
$\Lambda$&$\hookrightarrow$&$T_S^1$&$\hookrightarrow$&$V$ \\
\rotatebox{90}{$\subset$}&&\rotatebox{90}{$\subset$}&&\rotatebox{90}{$\subset$} \\
$\Lambda^{\chi}$&$\hookrightarrow$&$(T_S^1)^{\chi}$&$\hookrightarrow$&$V^{\chi}$ \\
\end{tabular}
$$
Furthermore, this diagram is a pull-back, i.e. $(T_S^1)^{\chi} = T_S^1 \cap V^{\chi}$ and $\Lambda^{\chi} = \Lambda \cap V^{\chi}$ in $V$. 
By Proposition \ref{orderunits} and Theorem \ref{perHeeg}, we know that $T_S^1/\Lambda$ is compact, and it follows that $\Lambda$ is a lattice in $V$.
We claim that $\Lambda ^{\chi}$ is a lattice in $V^{\chi}$. The discreteness of $\Lambda$ in $V$ implies that of $\Lambda^{\chi}$ in $V^{\chi}$. It suffices to show that $\Lambda^{\chi}$ generates $V^{\chi}$ over $\mathbb{R}$. To see this, consider the map $e : V \rightarrow V; x \mapsto \sum_{s \in G} \chi(s^{-1})s(x)$. Then $e$ maps $V$ (resp. $\Lambda$) to $V^{\chi}$ (resp. $\Lambda^{\chi}$), and $e(x)= (\# G)x$ for $x \in V^{\chi}$, where $\#G=2$. Therefore $e(\Lambda) \subset \Lambda^{\chi}$ generates $e(V^{\chi})=2V^{\chi}=V^{\chi}$. 
Now, $(T_S^1)^{\chi}/\Lambda^{\chi} = T_S^1/\Lambda \cap V^{\chi}/\Lambda^{\chi}$ in $V/\Lambda$, and both $ T_S^1/\Lambda$ and $V^{\chi}/\Lambda^{\chi}$ are compact . Therefore $(T_S^1)^{\chi}/\Lambda^{\chi}$ is compact.
\end{proof}

\begin{rmk}
We use the assumption that at least one infinite place of $F'$ splits in $F/F'$ only in the proof of Proposition \ref{chiunitperiod} to show the inclusion $\Gamma_{\varpi_S}(\chi) \subset \Gamma_{w,S}$. Theorem \ref{chiperiodicity} holds true without this assumption.
\end{rmk}

\section{Geodesic continued fractions}\label{gcf}
%
 In this section (and the next), we develop the theory of geodesic continued fractions, which was originally studied to find good Diophantine approximations, to determine the group of periods $\Gamma_{\varpi_S}$ (or $\Gamma_{\varpi_S}(\chi)$) of Heegner objects of number fields with rank one ($\chi$-)unit group. 

 First, we present a general theory of geodesic continued fractions. We introduce a new notion of weak convexity condition instead of the convexity condition in the previous works, in order to treat a larger class of geodesics including one dimensional ($\{\infty\}$-)Heegner objects. The most of the expositions in Sections \ref{gcf1} and \ref{siegelsets}, except for Theorem \ref{geodalgo}, are generalizations of Lagarias~\cite{lagarias} and Beukers~\cite{beukers}. In fact, in the case where $\mathfrak{G}= \mathfrak{G}_{(r, n-r), (1, 0)}$ (resp. $\mathfrak{G}_{(1,n-1),(1,0)}$) (see Definition \ref{dfnflatgeod}), and the Minkowski fundamental domain (resp. the LLL-reduced set) is chosen as a fundamental domain (see Definition \ref{FD}),  essentially the same algorithm as Algorithm \ref{GCF} is presented in \cite{lagarias} (resp. \cite{beukers}). However they do not treat algebraic numbers, and the periodicity of the algorithm is not discussed there.
 On the other hand, Theorem \ref{geodalgo}, which is simple and almost trivial, is unique in this paper, and combined with the periodicity of Heegner objects (Theorems \ref{perHeeg} and \ref{chiperiodicity}), assures us the periodicity of the algorithm applied to Heegner objects, and provides us with a non-torsion ($\chi$-)units of number fields (Theorems \ref{arch-main} and \ref{chiarch-main}).

\subsection{The algorithm}\label{gcf1} 
  Let $\mathfrak{h}^n$ be the generalized upper half space $GL_n(\mathbb{R}) / \mathbb{R}^{\times} O(n)$ defined in Section \ref{defheeg}. 
Set $\Gamma = SL_n(\mathbb{Z})$.
 The following theorem about the geodesics on $\mathfrak{h}^n$ is known, though we do not use this fact. See Terras~\cite[pp.12--17, Theorem 1]{terras2}.
 \begin{thm}[Geodesics on $\mathfrak{h}^n$]\label{terrasgeo}
 
  Let $V$ be a one-parameter subgroup of the positive entry diagonal subgroup of $GL_n(\mathbb{R})$. Then any (left) $GL_n(\mathbb{R})$-translation of the image of $V$ in $\mathfrak{h}^n$ is a geodesic with respect to the natural Riemannian metric. Conversely, every geodesic on $\mathfrak{h}^n$ is of this form. \qed
 \end{thm}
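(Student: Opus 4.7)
The plan is to identify $\mathfrak{h}^n = GL_n(\mathbb{R})/\mathbb{R}^{\times}O(n)$ as a Riemannian symmetric space of noncompact type, essentially $SL_n(\mathbb{R})/SO(n)$, and deduce the theorem from the standard description of geodesics on such spaces. Explicitly, I would write the Cartan decomposition $\mathfrak{sl}_n(\mathbb{R}) = \mathfrak{so}(n) \oplus \mathfrak{p}$, where $\mathfrak{p}$ is the subspace of traceless symmetric matrices. Under the identification $T_{[I]}\mathfrak{h}^n \cong \mathfrak{p}$, the natural Riemannian metric corresponds (up to a positive scalar, and modulo the trace direction that has been quotiented out) to the $O(n)$-invariant inner product $\langle X, Y\rangle = \mathrm{tr}(XY)$, and it is determined globally by left-translation invariance.

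For the first direction, let $V = \{\exp(tH) : t \in \mathbb{R}\}$ with $H$ a real diagonal matrix, and let $g \in GL_n(\mathbb{R})$. Since left multiplication by $g$ acts on $\mathfrak{h}^n$ by isometries, it suffices to show that the image of $V$ itself is a geodesic. After subtracting a scalar matrix from $H$ (absorbed by the $\mathbb{R}^{\times}$-quotient) I may assume $H \in \mathfrak{p}$. Then $\gamma(t) = \exp(tH)\cdot [I]$ is a geodesic through the basepoint by the standard fact that, in a symmetric space $G/K$, the curves $t \mapsto \exp(tX)\cdot[e]$ with $X \in \mathfrak{p}$ are exactly the geodesics through the basepoint; this can be seen by checking that the transvection $\exp(tH)$ shifts $\gamma$ along itself, so $\gamma$ is stable under its own one-parameter group of isometries.

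For the converse, let $\gamma$ be any geodesic on $\mathfrak{h}^n$. By the transitivity of the $GL_n(\mathbb{R})$-action by isometries, I translate so that $\gamma(0) = [I]$; then $\gamma(t) = \exp(tX)\cdot[I]$ for some $X \in \mathfrak{p}$. The spectral theorem for real symmetric matrices gives $X = k D k^{-1}$ with $k \in SO(n)$ and $D$ diagonal traceless, whence $\exp(tX) = k\exp(tD)k^{-1}$. Since $k^{-1} \in O(n)$ is absorbed by the right action on the coset, $\gamma(t) = k\cdot[\exp(tD)]$, which exhibits $\gamma$ as a left $GL_n(\mathbb{R})$-translate of the image of the one-parameter diagonal subgroup $\{\exp(tD)\}$, whose entries are automatically positive.

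The main subtlety I expect is the bookkeeping around the quotient $\mathbb{R}^{\times}O(n)$ and the normalization of the ``natural'' Riemannian metric on $\mathfrak{h}^n$: one must verify that scalar directions have been quotiented out cleanly (so left multiplication by all of $GL_n(\mathbb{R})$, not merely $SL_n(\mathbb{R})$, descends to isometries of $\mathfrak{h}^n$), and that what remains coincides up to scaling with the standard negatively curved metric on $SL_n(\mathbb{R})/SO(n)$. Once these identifications are pinned down, the argument reduces to the general structure theory of Riemannian symmetric spaces (as developed, e.g., in Helgason), and both implications follow with no further calculation.
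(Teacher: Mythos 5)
The paper does not actually prove this statement: it is quoted as a known fact (explicitly flagged as ``not used'' in the sequel) with a pointer to Terras, so there is no in-paper argument to compare against. Your proof is the standard symmetric-space argument --- identify $\mathfrak{h}^n$ with $SL_n(\mathbb{R})/SO(n)$, use the Cartan decomposition to see that geodesics through the basepoint are the orbits $t\mapsto \exp(tX)\cdot[I]$ with $X$ symmetric traceless, and diagonalize $X$ by an element of $SO(n)$ absorbed into the right quotient --- and it is correct, including the two bookkeeping points you flag (scalars act trivially so all of $GL_n(\mathbb{R})$ descends to isometries, and subtracting the trace part of $H$ is harmless modulo $\mathbb{R}^{\times}$). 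This is essentially the argument in the cited reference, so there is nothing to add beyond noting that the degenerate case of a scalar one-parameter subgroup yields a constant curve, which one must either count as a (trivial) geodesic or exclude.
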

 
 
 \begin{ex} One dimensional $\{\infty \}$-Heegner objects are geodesics.
 \end{ex}
 
 The essence of geodesic continued fraction algorithm is to observe ``a scenery" along the geodesic. In the following we explain what ``a scenery" means.
 
  We first recall the notion of a fundamental domain for the discrete action of $\Gamma$ on $\mathfrak{h}^n$ with an additional condition of ``geodesic convexity''.
 \begin{dfn}[Fundamental domain, weak $\mathfrak{G}$-convexity]\label{FD}~
 
  Let $\mathfrak{G}$ be a family of geodesics on $\mathfrak{h}^n$ which is stable under the $GL_n(\mathbb{R})$ translation. We call a geodesic in $\mathfrak{G}$ a $\mathfrak{G}$-{\it geodesic}.
  
 A {\it weakly $\mathfrak{G}$-convex fundamental domain} for $\Gamma = SL_n(\mathbb{Z})$ is the closed subset $\mathcal{F} \subset \mathfrak{h}^n$ satisfying the following properties.
 \begin{enumerate}[{\rm (i)}]
 \item {\it tiling property}: $$\mathfrak{h}^n = \bigcup\limits_{\gamma \in \Gamma} \gamma \mathcal{F}.$$
 \item {\it finiteness}: there exists an open neighborhood $U$ of $\mathcal{F}$ such that $\{\gamma \in \Gamma\ |\ \gamma U \cap U \neq \emptyset\}$ is a finite set.
 \item {\it weak $\mathfrak{G}$-convexity}: for any $\mathfrak{G}$-geodesic $\varpi: \mathbb{R}_{>0} \rightarrow \mathfrak{h}^n$, $ \varpi ^{-1}(\varpi (\mathbb{R}_{>0}) \cap \mathcal{F})$ is a union of a finite number of intervals.
 \end{enumerate}
 If the property {\rm (iii)} can be replaced with the following, then $\mathcal{F}$ is said to be $\mathfrak{G}$-{\it convex}.
 \begin{enumerate}[{\rm (iv)}]
 \item For any $\mathfrak{G}$-geodesic $\varpi: \mathbb{R}_{>0} \rightarrow \mathfrak{h}^n$, $ \varpi ^{-1}(\varpi (\mathbb{R}_{>0}) \cap \mathcal{F})$ is an interval.
 \end{enumerate}
 \end{dfn}
 
  For the time being let us  fix $\mathfrak{G}$ and a weakly $\mathfrak{G}$-convex fundamental domain $\mathcal{F}$. Let $\varpi$ be a $\mathfrak{G}$-geodesic. Then for a point $\varpi (t)$ ($t \in \mathbb{R}_{>0}$) on the geodesic $\varpi$, we consider  to which translation $\gamma \mathcal{F}$ of $\mathcal{F}$ it belongs. (``A scenery".)
  
  Now we present the algorithm. 
  In the following, for $I \subset \mathbb{R}$ and $x\in I$, we denote by $[I]_x$ the connected component of $I$ containing $x$.
 \begin{algo}[Geodesic continued fraction algorithm]\label{GCF}~
 
  Let $\varpi : \mathbb{R}_{>0} \rightarrow \mathfrak{h}^n$ be a $\mathfrak{G}$-geodesic. 
 \begin{list}{}{}
 \item[{\it Preparation}:] Set $u_0=1$. Take $B_0 \in \Gamma$ such that $\varpi (1) \in B_0 \mathcal{F}$ and set $\varpi _0 (t) = B_0{}^{-1} \varpi (t) $.
 \item[{\it Forward loop}:] For $\varpi _k$ ($k\geq0$) we define $\varpi _{k+1}$ as follows.
 	\begin{itemize} 
	\item Set $t_k:= \sup [\varpi_k^{-1} (\varpi _k (\mathbb{R}_{>0}) \cap \mathcal{F})]_{u_k}$, $s_k:= \inf [\varpi_k^{-1} (\varpi _k (\mathbb{R}_{>0}) \cap \mathcal{F})]_{u_k}$.
	
If $t_k = \infty$, then set $s_{k'+1}=t_{k'}:=\infty, A_{k'}:=I_n, B_{k'}:=B_k, \varpi_{k'}:=\varpi_k$ for $k' \geq k$ and stop the forward loop. Here $I_n$ is the $n\times n$ identity matrix.
	\item Determine $A_{k+1} \in \Gamma$ and $u_{k+1} > t_k$ such that $\varpi _k([t_k, u_{k+1}]) \subset A_{k+1} \mathcal{F}$. 
	\item Define $\varpi _{k+1} := A_{k+1}{}^{-1} \varpi _k$, $B_{k+1} := B_{k}A_{k+1}$.
	\end{itemize}
 \item[{\it Backward loop}:] For $\varpi _k$ ($k\leq0$) we define $\varpi _{k-1}$ as follows.
 	\begin{itemize}
	\item Set $t_k:= \sup [\varpi_k^{-1} (\varpi _k (\mathbb{R}_{>0}) \cap \mathcal{F})]_{u_k}$, $s_k:= \inf [\varpi_k^{-1} (\varpi _k (\mathbb{R}_{>0}) \cap \mathcal{F})]_{u_k}$.
	
If $s_k = 0$, then set $s_{k'}=t_{k'-1}:= 0, A_{k'}:=I_n, B_{k'}:=B_k, \varpi_{k'}:=\varpi_k$ for $k' \leq k$ and stop the backward loop. Here $I_n$ is the $n\times n$ identity matrix.
	\item Determine $A_{k} \in \Gamma$ and $u_{k-1} < s_k$ such that $\varpi _k([u_{k-1}, s_k]) \subset A_{k}^{-1} \mathcal{F}$.
	\item Define $\varpi _{k-1} := A_{k} \varpi _k$, $B_{k-1} := B_kA_{k}^{-1}$.
	\end{itemize} 
 \end{list}
 \end{algo}

\begin{rmk}
\begin{enumerate}
\item We may start with another choice of $u_0 \in \mathbb{R}_{>0}$.
\item The choice of $A_k$ may no be unique in general. 
\item The algorithm does not depend on the choice of $u_k$, thus we may forget $u_k$ after we take $A_{k+1}$. 
\end{enumerate}
\end{rmk}

 \begin{prop}\label{gcfwelldef} 
 \begin{enumerate}[{\rm (i)}]
 \item Algorithm \ref{GCF} is well defined.
 \item $\lim_{k \to \infty} t_k = \infty$.
 \item $\lim_{k \to -\infty} s_k = 0$.
 \end{enumerate}
 \end{prop}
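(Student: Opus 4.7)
The plan is to establish (i) by induction on $|k|$, and then deduce (ii) and (iii) by contradiction; both halves rely on combining the finiteness property and the weak $\mathfrak{G}$-convexity of $\mathcal{F}$.

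For (i), the preparation step follows immediately from the tiling property. For the forward inductive step, assume $\varpi_k$ and $u_k$ are given with $\varpi_k(u_k) \in \mathcal{F}$. Since $\mathfrak{G}$ is stable under the $GL_n(\mathbb{R})$-action, $\varpi_k$ is itself a $\mathfrak{G}$-geodesic, so weak convexity ensures that $\varpi_k^{-1}(\varpi_k(\mathbb{R}_{>0}) \cap \mathcal{F})$ is a finite union of intervals containing $u_k$; its connected component at $u_k$ is then a closed interval (closed since $\mathcal{F}$ is closed), and $s_k, t_k$ are well-defined. When $t_k < \infty$, take an open neighborhood $U$ of $\mathcal{F}$ as in the finiteness condition, so that only finitely many translates $\gamma\mathcal{F}$ meet $U$. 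For sufficiently small $\delta > 0$, $\varpi_k([t_k, t_k + \delta]) \subset U$, and applying weak convexity to each of the finitely many $\mathfrak{G}$-geodesics $\gamma^{-1}\varpi_k$ decomposes $[t_k, t_k+\delta]$ into finitely many subintervals each sitting inside a single $\gamma\mathcal{F}$. The subinterval abutting $t_k$ yields the required $A_{k+1}$ and $u_{k+1}$. Any valid choice satisfies $A_{k+1}\mathcal{F} \neq \mathcal{F}$ as subsets of $\mathfrak{h}^n$, for otherwise the component $[s_k, t_k]$ would extend past $t_k$, contradicting the supremum definition. The backward loop is symmetric.

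For (ii), suppose by contradiction that $(t_k)_{k \geq 0}$ is bounded. Since $t_{k+1} \geq u_{k+1} > t_k$, it is strictly increasing and converges to some finite $T \geq 1$. Apply the finiteness condition at the point $\varpi(T)$: a neighborhood $V$ of $\varpi(T)$ meets only finitely many $\Gamma$-translates $\gamma\mathcal{F}$. For $k$ large, $\varpi(u_k) \in V$, and the identity $\varpi(u_k) = B_k\varpi_k(u_k) \in B_k\mathcal{F}$ places $B_k\mathcal{F}$ in this finite list; by pigeonhole, some $\gamma_*\mathcal{F}$ coincides with $B_k\mathcal{F}$ for infinitely many indices $k$. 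Since consecutive translates $B_k\mathcal{F}$ and $B_{k+1}\mathcal{F}$ are distinct (by the observation $A_{k+1}\mathcal{F} \neq \mathcal{F}$), these $k$'s lie in infinitely many pairwise distinct connected components of $\varpi^{-1}(\gamma_*\mathcal{F})$, contradicting weak convexity applied to the $\mathfrak{G}$-geodesic $\gamma_*^{-1}\varpi$. Statement (iii) is entirely symmetric, replacing $t_k \to \infty$ with $s_k \to 0$ as $k \to -\infty$, and using that $(s_k)_{k \leq 0}$ is strictly decreasing and bounded below by $0$ (the boundary of the domain of $\varpi$).

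The main delicate point is the pigeonhole-to-weak-convexity conversion in (ii) and (iii): one must show that infinitely many pigeonhole recurrences produce infinitely many distinct connected components of $\varpi^{-1}(\gamma_*\mathcal{F})$, rather than collapsing into a fixed bounded collection. This in turn hinges on the observation $A_{k+1}\mathcal{F} \neq \mathcal{F}$, which rules out the degenerate scenario in which the algorithm indexes a single translate repeatedly while advancing $k$, and which itself is ensured by the supremum definition of $t_k$ combined with weak convexity.
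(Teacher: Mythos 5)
Your proposal is correct and follows essentially the same route as the paper's proof: well-definedness comes from covering a right-neighborhood of $\varpi_k(t_k)$ by the finitely many translates $\gamma\mathcal{F}$ meeting $U$ and applying weak $\mathfrak{G}$-convexity to each $\gamma^{-1}\varpi_k$, and (ii)/(iii) come from pigeonholing the translates containing $\varpi(u_k)$ near the limit point and producing infinitely many distinct connected components, contradicting weak convexity (the paper instead extracts infinitely many distinct $B_{k_l}$ and contradicts the finiteness property, but the ingredients are identical). One small remark: the pairwise distinctness of the components $[s_k,t_k]$ for indices $k<k'$ with $B_k\mathcal{F}=B_{k'}\mathcal{F}=\gamma_*\mathcal{F}$ already follows from $u_{k'}>t_k=\sup[s_k,t_k]$, so your intermediate observation $A_{k+1}\mathcal{F}\neq\mathcal{F}$, while true, is not actually needed for the argument.
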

 \begin{proof}(i) Since the proofs are the same for forward and backward loop, we consider only forward loop.
 Only the existence of such $u_{k+1}$ and $A_{k+1}$ in the algorithm is non-trivial. Take an open neighborhood $U$ of $\mathcal{F}$ such that $\{\gamma \in \Gamma\ |\ \gamma U \cap U \neq \emptyset\}$ is a finite set. Then $\{\gamma \in \Gamma\ |\ \gamma \mathcal{F} \cap U \neq \emptyset\} =: \{\gamma_1, \dots, \gamma_N\}$ is also a finite set. Since we have $U \subset \bigcup_{i=1}^{N}\gamma _i\mathcal{F}$ by the tiling property, $\bigcup_{i=1}^N \varpi_{k} ^{-1}(\varpi_{k} (\mathbb{R}_{>0}) \cap \gamma_i\mathcal{F})$ is a neighborhood of $t_k$. On the other hand, by the weak $\mathfrak{G}$-convexity, each  $\varpi_{k} ^{-1}(\varpi_{k} (\mathbb{R}_{>0}) \cap \gamma_i\mathcal{F})$ is a union $\bigcup_{j}I_{ij}$ of a finite number of closed intervals $I_{ij}\subset \mathbb{R}_{>0}$. 
 Therefore there exists $u_{k+1}>t_k$ such that $[t_k, u_{k+1}]$ is contained in one of these closed intervals $I_{ij}$. Then, $\varpi_k([t_k,u_{k+1}]) \subset \gamma_i \mathcal{F}$. This proves (i).
 
  (ii) Since the proofs of {\rm (ii)} and {\rm (iii)} are the same, we prove only (ii). We keep the notations in the proof of (i).
Suppose $\{t_k\}_k$ is bounded from above. Then $t_k$ converges to a constant $t_{\infty} \in \mathbb{R}_{>0}$. Take $\gamma_{\infty} \in \Gamma$ such that $\varpi(t_{\infty}) \in \gamma_{\infty} \mathcal{F}$. Since $\gamma_{\infty}U \subset  \bigcup_{i=1}^{N} \gamma_{\infty} \gamma _i\mathcal{F}$, there exists some $i$ such that $\gamma_{\infty} \gamma _i\mathcal{F}$ contains infinitely many $\varpi(t_k)$. On the other hand, by the weak convexity property, we can find an increasing sequence $k_l \in \mathbb{N}$ such that $B_{k_l}$ are distinct and $\varpi(t_{k_l}) \in \gamma_{\infty} \gamma _i\mathcal{F}$. Then, for large enough $l$, $\varpi(t_{k_l}) \in B_{k_l}\mathcal{F} \cap \gamma_{\infty} \gamma _i \mathcal{F}$. This contradicts the finiteness property.
 \end{proof}
 
 The next theorem interprets the periodicity of geodesics into that of the algorithm. 
 
 \begin{thm}\label{geodalgo}
   Let $\varpi : \mathbb{R}_{>0} \rightarrow \mathfrak{h}^n$ be a $\mathfrak{G}$-geodesic. Suppose $\varpi$ is periodic under the action of $\Gamma$, i.e. there exist $\epsilon \in \mathbb{R}_{>0}, \epsilon \neq 1$, and a non-torsion element $A \in \Gamma$ such that $A \varpi (t) =\varpi (\epsilon t)$ for all $t \in \mathbb{R}_{>0}$. 
\begin{enumerate}[{\rm (i)}]   
\item Then Algorithm \ref{GCF} applied to $\varpi$ does not stop, i.e. $s_{k}, t_{k} \in \mathbb{R}_{>0}$ for all $k \in \mathbb{Z}$.
\item Let $\{(A_k, B_k, \varpi _k)\}_k$ be the sequence taken by the algorithm. Then there exist $M, N \in \mathbb{N}, M<N$, and $\rho \in \mathbb{R}_{>1}$ such that 
   $\varpi _M (t) = \varpi _N (\rho t)$. 
\end{enumerate}   
 \end{thm}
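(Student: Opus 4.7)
The plan is to reduce statement (ii) to periodicity of the sequence of fundamental domain translates visited by $\varpi$, and then to exploit finiteness of the stabilizer of $\mathcal{F}$ in $\Gamma$. Replacing $(A,\epsilon)$ by $(A^{-1},\epsilon^{-1})$ if necessary, I may assume $\epsilon>1$.

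For (i), I would argue by contradiction in the forward case; the backward case is symmetric under $m\mapsto -m$. Suppose the forward loop terminates at some step $k$ with $t_k=\infty$, so that $\varpi_k(\mathbb{R}_{\geq u_k})\subset \mathcal{F}$. Conjugating by $B_k$, the element $A':=B_k^{-1}AB_k\in\Gamma$ is still non-torsion and satisfies $A'\varpi_k(t)=\varpi_k(\epsilon t)$ for all $t>0$. Iterating gives $(A')^m\varpi_k(u_k)=\varpi_k(\epsilon^m u_k)\in\mathcal{F}$ for every $m\geq 0$, whence $(A')^m\mathcal{F}\cap\mathcal{F}\neq\emptyset$ for all such $m$. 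The finiteness property of $\mathcal{F}$ then forces $\{(A')^m\}_{m\geq 0}$ to be a finite set, contradicting the hypothesis that $A'$ is non-torsion.

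For (ii), consider the sequence of translates $V_k:=B_k\mathcal{F}$ visited by $\varpi$, indexed by the time order of the intervals $[s_k,t_k]$. The map $t\mapsto \epsilon t$ is an order-preserving bijection of $\mathbb{R}_{>0}$; combined with $\varpi(\epsilon t)=A\varpi(t)$, it sends the visit at $V_k$ during $[s_k,t_k]$ to the visit at $AV_k$ during $[\epsilon s_k,\epsilon t_k]$. Viewed on the index set, this is an order-preserving bijection $\mathbb{Z}\to\mathbb{Z}$, hence a translation $k\mapsto k+\ell$ with $\ell\geq 1$ (since $\epsilon>1$). It follows that $V_{k+\ell}=AV_k$, i.e.\ $(AB_k)^{-1}B_{k+\ell}\in\operatorname{Stab}_\Gamma(\mathcal{F})$, a finite group by the finiteness property. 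Writing $B_{k+\ell}=AB_kS_k$ with $S_k\in\operatorname{Stab}_\Gamma(\mathcal{F})$ and iterating,
\[
B_{j\ell}\;=\;A^jB_0\cdot T^{(j)},\qquad T^{(j)}:=S_0 S_\ell\cdots S_{(j-1)\ell}\;\in\;\operatorname{Stab}_\Gamma(\mathcal{F}).
\]
Pigeonhole in the finite group $\operatorname{Stab}_\Gamma(\mathcal{F})$ yields $0\leq j_1<j_2$ with $T^{(j_1)}=T^{(j_2)}$. Setting $M:=j_1\ell$, $N:=j_2\ell$ and $j:=j_2-j_1\geq 1$, the equality of the $T$'s together with the trivial cancellation $B_0B_0^{-1}=I$ gives $B_N=A^j B_M$, and therefore
\[
\varpi_N(\epsilon^j t)\;=\;B_N^{-1}A^j\varpi(t)\;=\;B_M^{-1}\varpi(t)\;=\;\varpi_M(t),
\]
so $\rho:=\epsilon^j>1$ works.

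The main obstacle in carrying out this plan is rigorously establishing that the index shift $\ell$ is well-defined and constant, which requires the sequence $\{V_k\}_{k\in\mathbb{Z}}$ to be intrinsic to the geodesic, independently of the algorithm's (possibly non-unique) choices on the boundary of $\mathcal{F}$. A careful use of the weak $\mathfrak{G}$-convexity property, possibly combined with a small perturbation of the anchor points $u_k$ to keep $\varpi_k(u_k)\in\operatorname{int}(\mathcal{F})$, should suffice to handle this subtlety.
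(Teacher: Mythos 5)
Your part (i) is correct and uses the same ingredients as the paper (finiteness of $\{\gamma\ |\ \gamma\mathcal{F}\cap\mathcal{F}\neq\emptyset\}$ plus non-torsionness of $A$), merely phrased as a contradiction rather than directly; no issue there.

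Part (ii), however, rests on a step that you yourself flag and that I do not think can be repaired along the lines you suggest: the claim that the sequence of tiles $V_k=B_k\mathcal{F}$ is equivariant under $(A,\epsilon)$ with a constant index shift $\ell$, i.e.\ $V_{k+\ell}=AV_k$. The fundamental domains of Definition \ref{FD} are only required to tile and to be locally finite; distinct translates $\gamma\mathcal{F}$ are allowed to overlap on sets with nonempty interior, and for the Siegel sets and LLL-reduced sets actually used later they do. Consequently the tile sequence $\{V_k\}$ is genuinely non-canonical: at the times $[\epsilon s_k,\epsilon t_k]$ the algorithm is free to select a translate other than $AV_k$, and there is no intrinsic ``order-preserving bijection of the index set.'' Perturbing the anchor points into $\operatorname{int}(\mathcal{F})$ does not help, because a point in the interior of $\mathcal{F}$ can still lie in several other translates. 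So the identity $B_{k+\ell}=AB_kS_k$ with $S_k$ in the stabilizer of $\mathcal{F}$, on which your pigeonhole rests, is not established.

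The paper's proof avoids any such equivariance. It fixes $u_0\in[s_0,t_0]$, sets $u_k=\epsilon^k u_0$, and for each $k$ chooses \emph{some} index $N_k$ with $u_k\in[s_{N_k},t_{N_k}]$ (possible by Proposition \ref{gcfwelldef}). Then $\varpi(u_k)=A^k\varpi(u_0)\in A^kB_0\mathcal{F}\cap B_{N_k}\mathcal{F}$, hence $\varpi(u_0)\in B_0\mathcal{F}\cap A^{-k}B_{N_k}\mathcal{F}$, so by the finiteness property the elements $A^{-k}B_{N_k}$ range over a finite set. Pigeonhole gives $k<l$ with $A^{-k}B_{N_k}=A^{-l}B_{N_l}$, which immediately yields $\varpi_{N_k}(t)=\varpi_{N_l}(\epsilon^{l-k}t)$, and $N_k\neq N_l$ because $A$ is non-torsion. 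I would recommend replacing your tile-equivariance argument by this pointwise one; your final pigeonhole computation is essentially the same algebra, but applied to a quantity ($A^{-k}B_{N_k}$) whose finiteness actually follows from the axioms.
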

 \begin{proof} We can assume without loss of generality that $\epsilon >1$.
 
 (i) It suffices to prove that, if $\varpi(u) \in B\mathcal{F}$ $(u \in \mathbb{R}_{>0}, B \in \Gamma)$, then there exists $u' \in \mathbb{R}_{>0}$ such that $u<u'$ and $\varpi(u') \not\in B \mathcal{F}$. Indeed, since $A$ is not a torsion element, there exists $N \in \mathbb{N}$ such that $B\mathcal{F} \cap A^NB\mathcal{F} = \emptyset$. Then $\varpi(\epsilon^Nu)= A^N\varpi(u) \not\in B\mathcal{F}$.
 
 (ii) Take any $u_0 \in [s_0, t_0]$ and set $u_k = \epsilon ^k u_0$. For each $u_k$, Proposition \ref{gcfwelldef} allows us to take $N_k \in \mathbb{N}$ such that $u_k \in [s_{N_k}, t_{N_k}]$ and $N_k$ is monotonic in $k$. 
 
 Then we have $\varpi (u_k) = A^k \varpi (u_0) \in A^k B_0\mathcal{F} \cap B_{N_k} \mathcal{F}$. Therefore $\varpi (u_0) \in B_0\mathcal{F} \cap A^{-k}B_{N_k} \mathcal{F}$. Now, because of the finiteness property of $\mathcal{F}$, there exist $k,l \in \mathbb{N}, k<l$, such that $A^{-k}B_{N_k} = A^{-l}B_{N_l}$. Then, $\varpi _{N_k} (t) = B_{N_k}{}^{-1} \varpi (t) =B_{N_l}{}^{-1} A^{l-k} \varpi (t) = \varpi _{N_l} (\epsilon ^{l-k} t)$. Furthermore, since $A$ is a non-torsion element, $N_k \neq N_l$. This completes the proof. 
 \end{proof}
 
 \begin{rmk}
 In fact, the assumption that $A$ is a non-torsion element is superfluous. We can show using Theorem \ref{terrasgeo} and the similar argument as in Proposition \ref{heeginj} that $\varpi$ is injective. Then the assumption $\epsilon \neq 1$ implies that $A$ is a non-torsion element.
 \end{rmk}
 
\subsection{Siegel sets and LLL-reducedness}\label{siegelsets}
 Next we explicitly define some fundamental domains. First we recall the Iwasawa decomposition for $GL_n(\mathbb{R})$. 
 
\begin{thm}[Iwasawa decomposition for $GL_n(\mathbb{R})$]~

 For any $g \in GL_n(\mathbb{R})$ there exist $$X(x_{ij}) := 
 \left(
\begin{array}{ccc}
1&&x_{ij}\\
&\ddots&\\
\hsymb{0}&&1
\end{array}
\right),
\  Y(r_i) := 
\left(
\begin{array}{ccc}
r_1&&\hsymb{0}\\
&\ddots&\\
\hsymb{0}&&r_n
\end{array}
\right),\text{ and } 
R\in O(n)
$$where $x_{ij} \in \mathbb{R},\  r_i > 0$, such that $g=XYR$. Furthermore, this decomposition is unique. 
\end{thm}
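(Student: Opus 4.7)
The plan is to prove existence by Gram--Schmidt and uniqueness by a short algebraic argument about intersections of the subgroups involved.

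For existence, given $g \in GL_n(\mathbb{R})$ I would apply the Gram--Schmidt process to the rows $v_1, \dots, v_n$ of $g$ (which are linearly independent since $\det g \neq 0$). Setting $u_1 := v_1$ and inductively
\[
u_i := v_i - \sum_{j<i} \frac{\langle v_i, u_j\rangle}{\langle u_j, u_j\rangle}\, u_j,
\]
one obtains an orthogonal basis $(u_1, \dots, u_n)$, and normalizing $e_i := u_i/\|u_i\|$ gives an orthonormal basis. Reading off $v_i = \|u_i\|\, e_i + \sum_{j<i} c_{ij} \|u_j\|\, e_j$ with $c_{ij} := \langle v_i,u_j\rangle/\langle u_j,u_j\rangle$ expresses $g$ as $T R$, where $R$ is the matrix with rows $e_i$ (hence $R \in O(n)$) and $T$ is upper triangular with positive diagonal entries $\|u_1\|, \dots, \|u_n\|$. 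Finally, any such $T$ admits a unique factorization $T = XY$ with $X$ unipotent upper triangular and $Y$ diagonal with positive entries, by reading off $Y := \mathrm{diag}(\|u_1\|,\dots,\|u_n\|)$ and $X := TY^{-1}$.

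For uniqueness, suppose $XYR = X'Y'R'$ are two such decompositions. Then
\[
(X'Y')^{-1}(XY) = R'R^{-1} \in O(n),
\]
and the left-hand side is upper triangular with positive diagonal. The key observation is that an upper triangular matrix $S$ with $S S^{t} = I_n$ and positive diagonal must equal $I_n$: indeed, $S S^t = I_n$ forces the first row of $S$ to be a unit vector, hence (being upper triangular with positive diagonal) equal to $(1,0,\dots,0)$, and one iterates on the lower right $(n-1)\times(n-1)$ block. Applying this to our $S = (X'Y')^{-1}(XY)$ gives $XY = X'Y'$ and $R = R'$. Comparing diagonal entries in $XY = X'Y'$ yields $Y = Y'$, and then $X = X'$.

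I do not anticipate any serious obstacle: the only subtle point is the lemma that the only element of $O(n)$ which is upper triangular with positive diagonal is $I_n$, which amounts to a short row-by-row induction, and the existence step is Gram--Schmidt in disguise.
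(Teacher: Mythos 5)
The paper does not actually prove this statement: its ``proof'' is a citation to Goldfeld, Proposition 1.2.6, so there is no in-paper argument to compare against. Your Gram--Schmidt/QR approach is the standard one, and its skeleton --- existence by orthonormalization, uniqueness by showing that the only upper-triangular element of $O(n)$ with positive diagonal is $I_n$ --- is the right idea. However, as written, each half contains an orientation error that makes a stated step false.

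In the existence step, orthonormalizing the rows $v_1,\dots,v_n$ \emph{in that order} gives $v_i\in\mathrm{span}(e_1,\dots,e_i)$, so the factor $T$ in $g=TR$ satisfies $T_{ij}=0$ for $j>i$: it is \emph{lower} triangular, not upper triangular, and the final factorization $T=XY$ with $X$ unipotent upper triangular does not go through. The fix is to run Gram--Schmidt on the rows in reverse order $v_n,v_{n-1},\dots,v_1$ (equivalently, conjugate by the order-reversing permutation matrix), which places $v_i$ in $\mathrm{span}(e_i,\dots,e_n)$ and makes $T$ upper triangular. In the uniqueness step, the claim that $SS^t=I_n$ forces the first row of the upper-triangular matrix $S$ to equal $(1,0,\dots,0)$ does not follow: being a unit vector and upper-triangularity place no constraint on the entries $S_{12},\dots,S_{1n}$. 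Start the induction from the \emph{last} row instead, which is $(0,\dots,0,S_{nn})$, so that $|S_{nn}|=1$ and positivity give $S_{nn}=1$, and orthogonality of the remaining rows against it kills the last column; alternatively use $S^tS=I_n$ and the first \emph{column} $(S_{11},0,\dots,0)^t$. With these two transpositions corrected, the proof is complete.
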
 
\begin{proof}
See \cite[Proposition 1.2.6]{goldfeld}. 
\end{proof}

\begin{rmk}
 Note that any $z \in \mathfrak{h}^n$ can be written uniquely in the form $z=[X(x_{ij})Y_1(r_i)]$ where $Y_1(r_i)=Y(r_i,r_n=1)$.  
\end{rmk}

\begin{dfn}[Siegel set, LLL-reduced set]\label{dfnsiegelsets}~
\begin{enumerate}
\item For $a,b > 0$ we define the $Siegel$ $set$ $\Sigma_{a,b}$ by
$$\Sigma_{a,b} := \{z=[X(x_{ij})Y_1(r_i)] \in \mathfrak{h}^n \ |\  |x_{ij}| \leq b,\  r_i/r_{i+1} \geq a  \}.$$
In particular, we set $\mathcal{S} := \Sigma_{\frac{\sqrt{3}}{2},\frac{1}{2}}$. 
\item For a fixed $\omega \in [3/4,1]$, we define the $LLL$-$reduced$ $set$ $\mathcal{L} = \mathcal{L}_{\omega}$ by
$$\mathcal{L}_{\omega} := \{z=[X(x_{ij})Y_1(r_i)] \in \mathfrak{h}^n\  |\  |x_{ij}| \leq \frac{1}{2},\  \omega r_{i+1}^{\ 2} \leq r_i^{\ 2} + x_{i, i+1}^{\ 2} r_{i+1}^{\ 2}  \}.$$
\end{enumerate}
\end{dfn} 
See, for example, \cite{borel} or \cite{goldfeld}  for detailed accounts of Siegel sets, and \cite{beukers}, \cite{cohen} or \cite{LLL} for LLL-reduced sets.

\begin{prop}\label{siegelsetFD}
The sets $\mathcal{S}$ and $\mathcal{L}$ are fundamental domains, i.e. they satisfy {\rm (i)} and {\rm (ii)} in Definition \ref{FD}.
\end{prop}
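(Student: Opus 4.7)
The plan is to establish properties (i) (tiling) and (ii) (finiteness) of Definition \ref{FD} separately for $\mathcal{S}$ and $\mathcal{L}$. Both are classical reduction-theoretic facts; I would treat $\mathcal{S}$ in detail and then reduce the LLL case to it.

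For the Siegel set $\mathcal{S}$, the tiling property is the Hermite--Minkowski reduction of positive definite quadratic forms. Given $z = [X(x_{ij}) Y_1(r_i)] \in \mathfrak{h}^n$, I would alternate two operations: (a) \emph{size reduction}, i.e. left-multiplication by upper-triangular unimodular integer matrices to bring each $x_{ij}$ into $[-1/2, 1/2]$; and (b) \emph{swap}, i.e. left-multiplication by the Weyl element exchanging two consecutive basis vectors whenever some ratio $r_i/r_{i+1}$ violates $r_i/r_{i+1} \geq \sqrt{3}/2$. A direct computation shows that a swap strictly decreases the quantity $\prod_{i=1}^{n} r_i^{2(n-i)}$ (equivalently, the quadratic-form invariant underlying Minkowski's successive minima), while size reductions leave it unchanged. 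Since the $\Gamma$-orbit of $z$ is discrete in $\mathfrak{h}^n$, this quantity takes only discretely many values on the orbit, so the procedure terminates at a point of $\mathcal{S}$; this is exactly the classical argument in \cite[Proposition 1.3.2]{goldfeld}.

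For the finiteness property of $\mathcal{S}$, I would choose an open neighborhood $U = \Sigma_{a', b'}$ with $a' < \sqrt{3}/2$ and $b' > 1/2$ close to these bounds, and invoke Siegel's theorem: if $\gamma \in \Gamma$ satisfies $\gamma U \cap U \neq \emptyset$, then writing $\gamma z = z'$ with $z, z' \in U$ in Iwasawa form, one derives polynomial bounds on the entries of $\gamma$ in terms of $a', b'$ by expanding the matrix identity $\gamma \cdot X Y_1 = X' Y_1' R$ row by row, using $r_i/r_{i+1} \geq a'$ to control the off-diagonal pieces. Since $\gamma$ has integer entries, only finitely many choices survive. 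This is \cite[Theorem 1.4]{borel} or \cite[Proposition 1.3.5]{goldfeld}.

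For the LLL-reduced set $\mathcal{L}$, I would handle the tiling property by running the LLL algorithm (\cite{LLL}, \cite{cohen}): alternating size reductions with adjacent swaps whenever the Lovász condition $\omega r_{i+1}^2 \leq r_i^2 + x_{i,i+1}^2 r_{i+1}^2$ fails, and noting that the Lovász potential $\prod_{i=1}^{n-1} \bigl(\prod_{j=1}^{i} r_j\bigr)^2$ drops by a factor at most $\omega < 1$ at each swap. Termination again follows from discreteness of the $\Gamma$-orbit. For the finiteness property, the combination $|x_{i,i+1}| \leq 1/2$ and the Lovász condition yields $r_i^2 \geq (\omega - 1/4) r_{i+1}^2 \geq (1/2) r_{i+1}^2$, so $\mathcal{L} \subset \Sigma_{1/\sqrt{2},\, 1/2}$; any open neighborhood of $\mathcal{L}$ likewise sits in a slightly larger Siegel set, and finiteness follows from the Siegel case already handled. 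The main technical obstacle is purely bookkeeping: identifying the correct monotone invariant for each reduction process and verifying the explicit inclusion $\mathcal{L} \subset \Sigma_{a,b}$ with the right constants to inherit Siegel finiteness.
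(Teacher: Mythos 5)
Your proof is correct in substance and lands on the same classical facts, but it reconstructs them where the paper simply sandwiches everything between Siegel sets. The paper's entire proof is: quote the two classical properties of $\Sigma_{a,b}$ (tiling for $a\leq\sqrt{3}/2$, $b\geq 1/2$, and finiteness of $\{\gamma\ |\ \gamma\Sigma_{a,b}\cap\Sigma_{a,b}\neq\emptyset\}$ for all $a,b$) from Goldfeld and Borel, and then use the chain $\mathcal{S}\subset\Sigma_{\sqrt{\omega},1/2}\subset\mathcal{L}_{\omega}\subset\Sigma_{\sqrt{\omega-1/4},1/2}\subset\Sigma_{1/2,1/2}\subset(\Sigma_{1/3,1})^{\circ}$: the middle inclusion gives tiling of $\mathcal{L}_{\omega}$ for free from tiling of a Siegel set it contains, and the right-hand inclusions give finiteness for both sets at once. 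You do the same thing for finiteness (and your computation $r_i^2\geq(\omega-1/4)r_{i+1}^2$ is exactly the paper's inclusion $\mathcal{L}_{\omega}\subset\Sigma_{\sqrt{\omega-1/4},1/2}$), but for the tiling of $\mathcal{L}_{\omega}$ you run the LLL algorithm instead. That is a legitimate and in some ways more robust route: the paper's inclusion $\Sigma_{\sqrt{\omega},1/2}\subset\mathcal{L}_{\omega}$ only produces a \emph{tiling} Siegel subset when $\sqrt{\omega}\leq\sqrt{3}/2$, i.e.\ at $\omega=3/4$, whereas the LLL existence theorem covers every $\omega<1$ directly. Two small points to tighten: (1) your termination argument (``the potential drops by a factor at most $\omega<1$'') silently excludes the endpoint $\omega=1$ permitted by Definition \ref{dfnsiegelsets}; there you must fall back on the discreteness/minimum argument you allude to (the potential takes finitely many values below any bound on the orbit, so it attains a minimum, at which all Lov\'asz conditions hold). (2) $\Sigma_{a',b'}$ is closed, not open, so the neighborhood $U$ in Definition \ref{FD}(ii) should be its interior, exactly as the paper takes $U=(\Sigma_{1/3,1})^{\circ}$; finiteness for $U$ then follows from finiteness for the closed Siegel set containing it.
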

\begin{proof}
 It is known that $\mathfrak{h}^n = \bigcup_{\gamma \in \Gamma} \gamma \Sigma_{a,b}$ for $a \geq \sqrt{3}/2, b \geq 1/2$, and $\{\gamma \in \Gamma \ |\ \gamma \Sigma_{a,b} \cap \Sigma_{a,b}\}$ is a finite set for any $a,b>0$. See \cite[Proposition 1.3.2]{goldfeld} , \cite[Th\'eor\`eme 1.4, Th\'eor\`eme 4.6]{borel}. Now 
 \begin{eqnarray*}{}
 \mathcal{S} \subset \Sigma_{\sqrt{\omega} ,1/2} \subset \mathcal{L}_{\omega} &\subset& \Sigma_{\sqrt{\omega-1/4},1/2} \subset \Sigma_{1/2,1/2},\\
 \Sigma_{1/2,1/2} &\subset& (\Sigma_{1/3,1})^{\circ},
 \end{eqnarray*}
 where, for a topological space $X$, $X^{\circ}$ denotes the interior of $X$. This proves the proposition.
\end{proof}

 Next we would like to show that $\mathcal{S}$ and $\mathcal{L}$ are ``convex" fundamental domains. To do this, we introduce certain classes of geodesics. 
 
\begin{dfn}\label{dfnflatgeod} For a partition $\lambda$ of $n$; $\lambda = (n_1,\dots, n_r) \in \mathbb{N}^r, n=n_1+\dots +n_r, 1\leq n_1, \dots, n_r \leq n$, set
$$I_{\lambda}(t_1,\dots,t_r) = 
 \left(
				\begin{array}{ccc}
				t_1I_{n_1}&&\\
				&\ddots& \\
				&&t_r I_{n_r}
				\end{array}
				\right)
				,$$ 
where $I_{n_i}$ is the $n_i \times n_i$ identity matrix. Let $\lambda = (n_1,\dots, n_r)$ be a partition of $n$, and let $d = (d_1,\dots, d_r) \in \mathbb{N}^r$. We denote by $\mathfrak{G}_{\lambda, d}$ the set of geodesics of the form,
$$\varpi : \mathbb{R}_{>0} \rightarrow \mathfrak{h}^n; t \mapsto [gI_{\lambda}(t^{d_1},\dots, t^{d_r})],$$
for $g \in GL_n(\mathbb{R})$. Then, it is clear that $\mathfrak{G}_{\lambda, d}$ is stable under the action of $GL_n(\mathbb{R})$. 
 \end{dfn}

\begin{thm}\label{LLLweakconv} The following hold true.
\begin{enumerate}[{\rm (i)}]
\item $\mathcal{L}$ is $\mathfrak{G}_{(n-1,1),(0,1)}$-convex .
\item $\mathcal{L}$ and $\mathcal{S}$ are weakly $\mathfrak{G}_{\lambda ,d}$-convex for any $\lambda$ and $d$.
\end{enumerate}
\end{thm}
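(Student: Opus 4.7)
My plan is to analyze the Iwasawa coordinates of $\varpi(t) = [g I_\lambda(t^{d_1},\dots,t^{d_r})]$ as functions of $t$ via the Cholesky-type factorization $M(t) = X(t)Y(t)^2 X(t)^t$ of the positive definite Gram matrix $M(t) := g I_\lambda(t^{2d_1},\dots,t^{2d_r}) g^t$, where $X(t)$ is upper unipotent and $Y(t) = \mathrm{diag}(r_1(t),\dots,r_n(t))$. Solving this system column by column from $j=n$ down to $j=1$ produces the recursion
\begin{align*}
r_j(t)^2 &= M_{jj}(t) - \sum_{k>j} x_{jk}(t)^2 r_k(t)^2,\\
x_{ij}(t) &= \frac{1}{r_j(t)^2}\Bigl(M_{ij}(t) - \sum_{k>j} x_{ik}(t) x_{jk}(t) r_k(t)^2\Bigr) \qquad (i<j).
\end{align*}
Since the entries of $M(t)$ are polynomials in $t$ (as $d_i\in\mathbb{N}$) and each pivot $r_k(t)^2$ is positive on $\mathbb{R}_{>0}$ (it is a diagonal entry of the $Y^2$-factor of a positive definite matrix), every $x_{ij}(t)$ and $r_i(t)^2$ is a rational function of $t$ whose denominator does not vanish on $\mathbb{R}_{>0}$.

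Part (ii) is then immediate: each defining inequality of $\mathcal{L}$ (namely $|x_{ij}|\leq 1/2$ and $\omega r_{i+1}^2 \leq r_i^2 + x_{i,i+1}^2 r_{i+1}^2$) and of $\mathcal{S}$ (namely $|x_{ij}|\leq 1/2$ and $r_i/r_{i+1}\geq \sqrt{3}/2$) becomes, after clearing the positive denominators, a polynomial inequality in $t$. Its solution set in $\mathbb{R}_{>0}$ is a finite union of intervals, and a finite intersection of such sets is still a finite union of intervals, establishing weak $\mathfrak{G}_{\lambda,d}$-convexity of both $\mathcal{L}$ and $\mathcal{S}$.

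For part (i) I would carry out the recursion explicitly when $\varpi(t) = [g\,\mathrm{diag}(1,\dots,1,t)]$. Scaling only the $n$-th column of $g$ by $t$ scales the $n$-th row and column of $M$ by $t$ and the $(n,n)$-entry by $t^2$, while leaving all other entries of $M$ unchanged. A downward induction on $j$ from $n$ to $1$ then shows
\begin{align*}
r_n(t) = t\,r_n(1),\quad x_{in}(t) = x_{in}(1)/t\ (i<n),\quad r_j(t) = r_j(1),\ x_{ij}(t) = x_{ij}(1)\ (i<j<n),
\end{align*}
because every correction term involving the index $k=n$ has the form $x_{in}(t)x_{jn}(t)r_n(t)^2 = (x_{in}(1)/t)(x_{jn}(1)/t)(t^2 r_n(1)^2) = x_{in}(1)x_{jn}(1)r_n(1)^2$, hence is $t$-independent. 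Consequently the bounds $|x_{ij}|\leq 1/2$ for $j<n$ and the LLL conditions at indices $i\leq n-2$ are $t$-independent; the bounds $|x_{in}(t)|\leq 1/2$ combine to the lower bound $t\geq 2\max_{i<n}|x_{in}(1)|$; and the LLL condition at $i=n-1$ reduces to the upper bound $\omega t^2 r_n(1)^2 \leq r_{n-1}(1)^2 + x_{n-1,n}(1)^2 r_n(1)^2$. The preimage $\varpi^{-1}(\varpi(\mathbb{R}_{>0})\cap\mathcal{L})$ is therefore either empty or a single closed interval, proving $\mathfrak{G}_{(n-1,1),(0,1)}$-convexity.

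The main technical step, and the one to handle carefully, is the inductive verification in part (i) that every Iwasawa datum with $j<n$ is $t$-independent; the precise cancellation between the $t^{-1}$-growth of $x_{in}(t)$ and the $t^2$-growth of $r_n(t)^2$ in the correction terms is what makes the induction go through, and without it one would a priori get only rational-in-$t$ behaviour and weak convexity rather than strong convexity.
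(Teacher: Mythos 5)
Your part (ii) is correct and is essentially the paper's argument in a different dress: you get rationality of the Iwasawa data in $t$ from the Gram--Schmidt recursion, the paper gets it from closed determinant formulas ($r_i^2=B_{ii}/B_{i+1,i+1}$, $x_{ij}=B_{ij}/B_{jj}$, $r_{i-1}^2+x_{i-1,i}^2r_i^2=C_i/B_{i+1,i+1}$ with $B_{ij},C_i$ polynomials in $\tau_k=t_k^2$); either way each defining inequality clears to a polynomial inequality whose solution set is a finite union of intervals.

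Part (i), however, rests on a false computation. The geodesic is $t\mapsto[g\,I_{(n-1,1)}(1,t)]=[g\,\mathrm{diag}(1,\dots,1,t)]$ with the diagonal factor on the \emph{right} of $g$, so
\begin{equation*}
M_{ij}(t)=\sum_{k<n}g_{ik}g_{jk}+t^2\,g_{in}g_{jn}:
\end{equation*}
\emph{every} entry of $M$ acquires a $t^2$-term, not just the last row and column. The structure you describe (last row and column of $M$ scaled by $t$, the $(n,n)$-entry by $t^2$, the rest fixed) is what happens for the left-scaled family $[\,\mathrm{diag}(1,\dots,1,t)\,g]$, which is a different family of curves and not the $GL_n(\mathbb{R})$-translation-stable family $\mathfrak{G}_{(n-1,1),(0,1)}$ that the algorithm requires. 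Consequently your formulas $r_n(t)=t\,r_n(1)$, $x_{in}(t)=x_{in}(1)/t$, $r_j(t)=r_j(1)$, $x_{ij}(t)=x_{ij}(1)$ for $j<n$ are wrong: already for $n=2$ and $g=\left(\begin{smallmatrix}a&b\\c&d\end{smallmatrix}\right)$ one finds $r_2(t)^2=c^2+d^2t^2$ and $x_{12}(t)=(ac+bdt^2)/(c^2+d^2t^2)$; for $g=\left(\begin{smallmatrix}1&1/4\\0&1\end{smallmatrix}\right)$ this gives $x_{12}\equiv 1/4$, not $x_{12}(1)/t$. Hence the reduction of the $\mathcal{L}$-constraints to one lower bound and one upper bound on $t$ collapses, and your induction does not prove convexity.

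The conclusion of (i) is nevertheless true, and the missing ingredient is the multilinearity statement (the paper's Lemma on the minors $B_{ij}$, $C_i$): each such minor is at most degree one in each $\tau_k$, hence for this family is an \emph{affine} function of $\tau=t^2$. Since the denominators $B_{jj}$, $B_{i+1,i+1}$ are positive, every defining inequality of $\mathcal{L}$ becomes a linear inequality in $\tau$, and a finite intersection of half-lines in the single variable $\tau$ is an interval. Your recursion, applied correctly, only shows the coordinates are rational in $\tau$ of degree possibly greater than one, which yields weak convexity but not the convexity asserted in (i).
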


 We follow the argument in Beukers~\cite{beukers} in which Theorem \ref{LLLweakconv} {\rm (i)} is proved.  In order to prove this theorem, we have several things to prepare.
 
 For simplicity, we write $I_1(\mathbf{t})$ for $I_{(1,\dots, 1)}(t_1,\dots, t_n)$. Let $g \in GL_n(\mathbb{R})$ and let
 $$\varPi : \mathbb{R}_{>0}^n \rightarrow \mathfrak{h}^n ; \mathbf{t} = (t_1, \cdots, t_n) \mapsto [g 
I_1(\mathbf{t}) ].$$
 We consider the condition for $\varPi (\mathbf{t})$ to be in $\mathcal{S}$, $\mathcal{L}$.
 Let 
 $$g I_1(\mathbf{t})= X(x_{ij}(\mathbf{t})) Y(r_i(\mathbf{t})) R(\mathbf{t})$$
  be the Iwasawa decomposition. Set 
  $$Q(\mathbf{t}) = (q_{ij}(\mathbf{t}))_{ij} := 
 g I_1(\mathbf{t}) {}^t\!I_1(\mathbf{t}) {}^t\!g =  g I_1(\mathbf{t}^2){}^t\!g .$$
Here ${}^t\! g$ denotes the transpose matrix of $g$. In the following we occasionally omit the parameter $\mathbf{t}$ for simplicity.
\begin{lem}\label{lemgcf1}
Define $\tilde{B}_{ij} := 
\left(
\begin{array}{lcl}
q_{ij}&\cdots&q_{in}\\
q_{j+1j}&\cdots&q_{j+1n}\\
&\vdots&\\
q_{nj}&\cdots&q_{nn}
\end{array}
\right)
$ for $i\leq j$, and set $B_{ij} := \det \tilde{B}_{ij}$. 

Define $\tilde{C_i}$ as the submatrix of $\tilde{B}_{i-1i-1}$ obtained by deletion of the second row and column (i.e. those crossing at $q_{ii}$), and set $C_i := \det \tilde{C}_i$.

Then the following equalities hold true.
\begin{enumerate}[{\rm (i)}]
\item $r_i^2 = B_{ii} / B_{i+1i+1}$.
\item $x_{ij} = B_{ij} / B_{jj}$.
\item $r_{i-1}^2 + x_{i-1i}^2 r_{i}^2 = C_{i} / B_{i+1i+1}$.
\end{enumerate}
Here $B_{n+1n+1} := 1$.
\end{lem}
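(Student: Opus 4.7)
The whole lemma will be deduced from a single identity. Substituting the Iwasawa decomposition $gI_1(\mathbf{t}) = XYR$ into the definition of $Q$, and using $R\,{}^t\!R = I_n$ together with the fact that $I_1(\mathbf{t})$ is diagonal, will yield
\[
Q = XY^2\,{}^t\!X,
\]
where $X$ is upper unitriangular and $Y^2 = \mathrm{diag}(r_1^2,\dots,r_n^2)$. This is a ``reverse Cholesky'' factorization, and the three formulas of the lemma extract different minors from it.

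For (i), the plan is to observe that $\tilde{B}_{ii}$ is exactly the bottom-right $(n-i+1)\times(n-i+1)$ submatrix of $Q$. By the block-triangular structure of $X$, this submatrix factors as $X_{[i:n]}\,Y^2_{[i:n]}\,{}^t\!X_{[i:n]}$, with $X_{[i:n]}$ still upper unitriangular. Taking determinants will give $B_{ii} = \prod_{k\geq i} r_k^2$, and dividing consecutive such products proves (i).

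For (ii), I would right-multiply the identity by $L := {}^t\!X^{-1}$ to obtain $QL = XY^2$, where $L$ is lower unitriangular. The plan is then to extract from both sides the submatrix indexed by rows $\{i\}\cup\{j+1,\dots,n\}$ and columns $\{j,j+1,\dots,n\}$. Because $L$ is lower triangular, its restriction to rows and columns $\geq j$ is still lower unitriangular (so has determinant $1$), and the restricted product equals $\tilde{B}_{ij}$ times this block of $L$. On the other hand, the corresponding restriction of $XY^2$ will be upper triangular with diagonal entries $x_{ij}r_j^2,\, r_{j+1}^2,\dots,r_n^2$, so equating determinants should give $B_{ij} = x_{ij}\, B_{jj}$.

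Part (iii) is the most delicate, and is where I expect the main obstacle. Starting from the entrywise formula $Q_{kl} = \sum_m x_{km} r_m^2 x_{lm}$, the trick is to split the sum according to whether $m=i$, which will give the rank-one decomposition
\[
\tilde{C}_i \;=\; \tilde{X}\,\tilde{D}\,{}^t\!\tilde{X} \;+\; r_i^2\,\vec{u}\,{}^t\!\vec{u},
\]
where $\tilde{X}$ is the upper-unitriangular submatrix of $X$ indexed by $\{i-1,i+1,\dots,n\}$, $\tilde{D} = \mathrm{diag}(r_{i-1}^2,r_{i+1}^2,\dots,r_n^2)$, and $\vec{u}$ collects the entries $x_{k,i}$ for $k$ in this index set. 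The crucial observation is that $x_{k,i} = 0$ for $k > i$, so $\vec{u} = {}^t\!(x_{i-1,i},0,\dots,0)$ is concentrated in its first coordinate. Applying the matrix determinant lemma and using $(\tilde{X}\tilde{D}\,{}^t\!\tilde{X})^{-1}_{11} = 1/r_{i-1}^2$ (which follows immediately from $\tilde{X}^{-1}$ being upper unitriangular) will then collapse everything to $C_i = (r_{i-1}^2 + x_{i-1,i}^2 r_i^2)\, B_{i+1,i+1}$, as desired.
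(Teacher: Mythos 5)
Your argument is correct: the identity $Q=XY^{2}\,{}^{t}\!X$ follows at once from the Iwasawa decomposition and $R\,{}^{t}\!R=I_n$, and each of (i)--(iii) then comes out of the minor extractions exactly as you describe (in (ii) the key point that $(QL)_{S,T}=Q_{S,T}L_{T,T}$ holds because $T=\{j,\dots,n\}$ is a tail set and $L$ is lower triangular; in (iii) the vanishing $x_{ki}=0$ for $k>i$ and $((\tilde{X}\tilde{D}\,{}^{t}\!\tilde{X})^{-1})_{11}=1/r_{i-1}^{2}$ are both justified). The paper itself gives no proof, simply citing Beukers, Theorems 3.6--3.7, where these are derived as Gram-determinant identities for the Gram matrix $Q$ of the columns of $gI_1(\mathbf{t})$; your Cholesky-type factorization is the standard route to the same identities, so this is a legitimate self-contained substitute for the citation.
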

\begin{proof}
See \cite[Theorem 3.6, Theorem 3.7]{beukers}.
\end{proof}

\begin{lem}\label{lemgcf2}
$B_{ij}$ and $C_i$ can be written in the form
$$B_{ij} (\mathbf{t}) = \sum_{\substack{I \subset \{1,\cdots, n\} \\ \# I = n-j+1 }} b_{ij,I} t_I^2,\ \ 
C_{i} (\mathbf{t}) = \sum_{\substack{I \subset \{1,\cdots, n\} \\ \# I = n-i+1 }} c_{i,I} t_I^2$$
where $b_{ij,I},\ c_{i,I} \in \mathbb{R}$ and $t_I = \prod_{k \in I} t_k$. In other words, if we set $\tau _k = t_k^2$, $B_{ij} (\mathbf{t})$ is a homogeneous polynomial of $\tau _k$ of degree $n-j+1$ which is at most degree one in each $\tau _k$.
\end{lem}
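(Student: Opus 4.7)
The plan is to derive both formulas directly from the Cauchy--Binet formula applied to the factorization $Q(\mathbf{t}) = g\,I_1(\mathbf{t}^2)\,{}^t\!g$, which is the key structural input.

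First I would observe that setting $\tau_k := t_k^2$ gives
$$q_{ij}(\mathbf{t}) = \sum_{k=1}^n g_{ik}\,g_{jk}\,\tau_k,$$
so every entry $q_{ij}$ of $Q(\mathbf{t})$ is a homogeneous linear form in the $\tau_k$. Consequently, any minor of $Q(\mathbf{t})$ of size $m$ is a polynomial in the $\tau_k$ of total degree $m$, with degree at most one in each $\tau_k$ (because $I_1(\mathbf{t}^2)$ is diagonal, so only $m$-fold products of distinct $\tau_k$'s can appear in the Cauchy--Binet expansion).

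The precise step is as follows. If $A,B \subset \{1,\dots,n\}$ are row and column index sets of size $m$ and $Q[A,B]$ denotes the corresponding submatrix, then Cauchy--Binet applied to $Q = g\,I_1(\mathbf{t}^2)\,{}^t\!g$ yields
$$\det Q[A,B] \;=\; \sum_{\substack{I \subset \{1,\dots,n\} \\ \#I = m}} \det g[A,I]\,\det g[B,I]\,\prod_{k\in I}\tau_k,$$
since the only nonzero minors of the diagonal matrix $I_1(\mathbf{t}^2)$ are the principal ones $\prod_{k\in I}\tau_k$. Now $\tilde{B}_{ij}$ is precisely the submatrix $Q[A,B]$ with $A = \{i,j+1,j+2,\dots,n\}$ and $B = \{j,j+1,\dots,n\}$ (both of size $m = n-j+1$), while $\tilde{C}_i$ is $Q[A',A']$ with $A' = \{i-1,i+1,\dots,n\}$ (of size $n-i+1$). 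Substituting these choices of $A,B$ into the Cauchy--Binet expansion immediately gives
$$B_{ij}(\mathbf{t}) = \sum_{\#I = n-j+1} b_{ij,I}\,t_I^2, \qquad C_i(\mathbf{t}) = \sum_{\#I = n-i+1} c_{i,I}\,t_I^2,$$
with $b_{ij,I} = \det g[A,I]\,\det g[B,I]$ and $c_{i,I} = (\det g[A',I])^2$, both real. The claimed multidegree restriction ($\tau$-degree one in each variable, total $\tau$-degree $n-j+1$ or $n-i+1$) is built into the sum being over $I$'s of the stated cardinality.

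There is no real obstacle: once one recognizes that $Q(\mathbf{t})$ is of the form $UDU^\top$ with $D$ diagonal, Cauchy--Binet does all the work. The only care needed is to read off the correct index sets $A, B, A'$ from the definitions of $\tilde{B}_{ij}$ and $\tilde{C}_i$ so that the size of $I$ matches the claimed $n-j+1$ and $n-i+1$.
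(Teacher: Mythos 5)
Your proof is correct. It rests on the same structural input as the paper's proof---namely that $\tilde{B}_{ij}$ (resp.\ $\tilde{C}_i$) is the submatrix $Q[A,B]$ (resp.\ $Q[A',A']$) of $Q=g\,I_1(\mathbf{t}^2)\,{}^t\!g$ for the index sets $A=\{i,j+1,\dots,n\}$, $B=\{j,\dots,n\}$, $A'=\{i-1,i+1,\dots,n\}$, hence factors as $(\text{submatrix of }g)\cdot\mathrm{diag}(\tau_1,\dots,\tau_n)\cdot(\text{submatrix of }{}^t\!g)$---but it finishes differently. The paper reads off the homogeneity of degree $n-j+1$ from the linearity of the entries $q_{ij}$ in the $\tau_k$, and then, for each $k$ separately, chooses invertible $P,Q$ so that $\tau_k$ occurs in at most one entry of $P\tilde{B}_{ij}Q$ and linearly there, which bounds the degree in $\tau_k$ by one. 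Your Cauchy--Binet expansion
$\det Q[A,B]=\sum_{\#I=m}\det g[A,I]\det g[B,I]\prod_{k\in I}\tau_k$
delivers both facts in one stroke and, unlike the paper's argument, produces the coefficients explicitly, $b_{ij,I}=\det g[A,I]\det g[B,I]$ and $c_{i,I}=(\det g[A',I])^2$ (so in particular $C_i\geq 0$, a small bonus). The only point requiring care is the identification of the index sets from the definitions in Lemma \ref{lemgcf1}, and you carry that out correctly. Both arguments are complete; yours is the more standard and more informative one.
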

\begin{proof}

 Since the proofs are absolutely the same for $B_{ij}$ and $C_i$, we prove only for $B_{ij}$.
The claim that $B_{ij}(t)$ is a homogeneous of degree $n-j+1$ in $\tau _k$ holds since $q_{ij}$ are linear combinations of $\tau _k$.

 Any $\tilde{B}_{ij}$ can be written in the form
\begin{eqnarray*}
 A' g \left( \begin{array}{ccc} \tau _1&&\\ &\ddots&\\ &&\tau _n \end{array} \right) {}^t \! g B' 
				   = A \left( \begin{array}{ccc} \tau _1&&\\ &\ddots&\\ &&\tau _n \end{array} \right) B
\end{eqnarray*}
for some $A',A \in M_{n-j+1,n}(\mathbb{R})$ and $B',B \in M_{n,n-j+1}(\mathbb{R})$.
For any $k$, we can find $P,Q \in GL_{n-j+1}(\mathbb{R})$ such that there is at most one non-zero entry $1$ in the $k$ th column (resp. row) of $PA$ (resp.$BQ$).  Then $\tau _k$ appears in at most one place in $P\tilde{B}_{ij}Q$ which is at most degree one in $\tau _k$. Therefore, $B_{ij}=\det \tilde{B}_{ij} = \det P\tilde{B}_{ij}Q /(\det P \det Q)$ is at most degree one in $\tau _k$. This proves the lemma. 
\end{proof}

\begin{proof}[Proof of Theorem \ref{LLLweakconv}]

(i) By Lemma \ref{lemgcf1} and Lemma \ref{lemgcf2}, for any $\mathfrak{G}_{(n-1,1),(0,1)}$ geodesic $\varpi$, the set of $\tau = t^2 \in \mathbb{R}$ which satisfy $\varpi (t) \in \mathcal{L}$ is defined by $(n^2+n)/2-1$ linear inequalities in $\tau$, and hence an interval.

(ii) Similarly, for any $\mathfrak{G}_{\lambda,d}$ geodesic $\varpi$, the set of $\tau = t^2$ which satisfy $\varpi (t) \in \mathcal{L}$ is defined by $(n^2+n)/2-1$ inequalities of degree at most $\sum_{i} n_id_i$ in $\tau$, and hence a union of a finite number of intervals.
\end{proof}
\paragraph{Technical remarks}~\\
Suppose we take $\mathcal{L}=\mathcal{L}_{3/4}$ as a fundamental domain , and let $\varpi$ be a $\mathfrak{G}_{\lambda,d}$-geodesic.
From the argument in the proof of Theorem \ref{LLLweakconv}, we see the following.
\begin{cor}
We can practically compute $s_k,t_k$ in Algorithm \ref{GCF} by solving $(n^2+n)/2-1$ inequalities of degree at most $\sum_{i} n_id_i$ appeared in the proof of Theorem \ref{LLLweakconv}. \qed
\end{cor}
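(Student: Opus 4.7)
The plan is to make explicit the computations underlying the proof of Theorem \ref{LLLweakconv}. First, I would enumerate the inequalities that cut out $\mathcal{L}=\mathcal{L}_{3/4}$ inside $\mathfrak{h}^n$ in terms of the Iwasawa coordinates: the $n(n-1)/2$ upper-bound conditions $|x_{ij}|\leq 1/2$ for $1\leq i<j\leq n$, together with the $n-1$ LLL conditions $\omega r_{i+1}^{2}\leq r_i^{2}+x_{i,i+1}^{2}r_{i+1}^{2}$. This totals $n(n-1)/2+(n-1)=(n^{2}+n)/2-1$ conditions.

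Next, the geodesic $\varpi_k$ produced by Algorithm \ref{GCF} is, by the $GL_n(\mathbb{R})$-stability of $\mathfrak{G}_{\lambda,d}$, again of the form $\varpi_k(t)=[g\,I_\lambda(t^{d_1},\ldots,t^{d_r})]$ for some $g\in GL_n(\mathbb{R})$. Applying Lemma \ref{lemgcf1}, each defining condition of $\mathcal{L}$ can be rewritten as $B_{jj}\pm 2B_{ij}\geq 0$ or $C_{i+1}-\omega B_{i+1,i+1}\geq 0$ after clearing the positive denominators (the diagonal minors $B_{jj}$ are strictly positive since the matrix $Q(\mathbf{t})=g\,I_1(\mathbf{t}^2){}^t g$ is positive definite). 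Now I would specialise $t_k=t^{d_i}$ whenever the index $k$ lies in the $i$-th block of $\lambda=(n_1,\ldots,n_r)$. By Lemma \ref{lemgcf2}, each $B_{ij}$ and each $C_i$ is a sum of monomials $b_{I}\,t_I^{2}$ with $|I|\leq n$ and with each $\tau_k=t_k^2$ entering to degree at most one; under the specialisation such a monomial becomes $b_I\,t^{2\sum_{k\in I}d(k)}$, and the exponent is bounded above by $2\sum_i n_id_i$. Consequently each of the $(n^{2}+n)/2-1$ defining conditions turns into a polynomial inequality in $\tau=t^{2}$ of degree at most $\sum_i n_id_i$.

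Finally, by Theorem \ref{LLLweakconv}, the set $\varpi_k^{-1}(\varpi_k(\mathbb{R}_{>0})\cap\mathcal{L})$ is a finite union of closed intervals, and by the previous step it is precisely the common solution set in $\tau\in\mathbb{R}_{>0}$ of these $(n^{2}+n)/2-1$ polynomial inequalities. The quantities $s_k$ and $t_k$ are then read off as the endpoints of the connected component of this set containing $u_k$, which amounts to isolating real roots of univariate polynomials of controlled degree. The only mild bookkeeping is verifying that the degree bound $\sum_i n_id_i$ holds uniformly across all of the $B_{ij}$ and $C_i$, but this is immediate from the ``linear in each $\tau_k$'' property of Lemma \ref{lemgcf2}; no ingredient beyond what already appears in the proof of Theorem \ref{LLLweakconv} is needed.
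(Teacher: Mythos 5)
Your proposal is correct and follows exactly the route the paper intends: the corollary is stated with an immediate \qed precisely because it is a restatement of part (ii) of the proof of Theorem \ref{LLLweakconv}, namely that membership of $\varpi_k(t)$ in $\mathcal{L}$ is cut out by the $(n^2+n)/2-1$ conditions of Definition \ref{dfnsiegelsets}, converted via Lemmas \ref{lemgcf1} and \ref{lemgcf2} into polynomial inequalities in $\tau=t^2$ of degree at most $\sum_i n_i d_i$. Your elaboration (clearing the positive denominators $B_{jj}$, specialising $t_k=t^{d_i}$ blockwise, and reading off $s_k,t_k$ as endpoints of the connected component containing $u_k$) fills in the bookkeeping the paper leaves implicit, with no deviation in method.
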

Therefore, in order to execute (forward loop of) Algorithm \ref{GCF}, it suffices to find 
\begin{enumerate}[(a)]
\item $B_0 \in \Gamma$ such that $\varpi (1) \in B_0\mathcal{L}$,  
\item $A_{k+1} \in \Gamma$ and $u_{k+1} > t_k$ such that $\varpi _k([t_k, u_{k+1}]) \subset A_{k+1} \mathcal{L}$ for each $k > 0$. 
\end{enumerate}
One method to do this is to use LLL-reduction, named after its inventors Lazlo Lovasz, Arjen Lenstra and Hendrik Lenstra \cite{LLL}. For any $z \in \mathfrak{h}^n$, LLL-reduction is literally the algorithm to find $A\in \Gamma$ such that $Az$ is LLL-reduced, i.e. $z \in A^{-1}\mathcal{L}$.
Therefore, we can find $B_0$ in (a). Furthermore, if we compute $\gamma_l \in \Gamma$ such that $\varpi_k(t_k+1/l) \in \gamma_l \mathcal{L}$ for $l \in \mathbb{N}_{>0}$, some  $t_k +1/l$ and $\gamma_l$ satisfies the condition of $u_k$ and $A_{k+1}$ in (b) by Proposition \ref{gcfwelldef}.
See, for example, \cite{LLL}, \cite{beukers} or \cite{cohen} for detailed explanation of LLL-reduction algorithm.

Another possibility to determine $A_{k+1} \in \Gamma$ in (b) is to compute the set $J = \{\gamma \in \Gamma \ |\  \gamma (\Sigma_{1/3,1})^{\circ} \cap (\Sigma_{1/3,1})^{\circ} \neq \emptyset \}$ which is a finite set by Proposition \ref{siegelsetFD}, in advance. Then we can always find $A_{k+1}$ in $J$.

In the numerical examples in Section \ref{sectex}, we choose the former method since this author cannot compute the set $J$ for $n\geq 3$ at present.

\subsection{Application to number fields}\label{sectapp}
 Now we present a generalization of Lagrange's theorem by applying the geodesic continued fraction algorithm (Algorithm \ref{GCF}) to Heegner objects. In this subsection we always assume $S=\{\infty\}$.
 
\paragraph{Number fields with rank one unit group}~\\
 We now return to the notations in Sections \ref{defheeg} to \ref{sectper}. 
 Let $F$ be a number field of degree $n$ such that the rank of its unit group is one, i.e. $F$ is either a real quadratic field, complex cubic field or totally imaginary quartic field.  In this case, there are exactly two archimedean embeddings $\sigma _i : F \rightarrow F_{\sigma_i} \subset \mathbb{C}$ $(i=1,2)$ (up to complex conjugates). Here $F_{\sigma_i}$ is the completion of $F$ with respect to $\sigma_i$. (We allow the choice: $\sigma_1$ is complex and $\sigma_2$ is real.)
 
 Let $w = {}^t\!(w _1 \cdots w _n)$ be a basis of $F$ over $\mathbb{Q}$, and let 
  $$\varpi : ( \mathbb{R}_{>0}^2)^1 \simeq \mathbb{R}_{>0} \rightarrow \mathfrak{h}^n ; t \mapsto [ w^{(1)}t\  w^{(2)}/t ] = [w^{(1)}t^2\  w^{(2)}]$$
be the ($\{\infty\}$-)Heegner object associated to $w$. Then by the change of variables $\tau = t^2$, we see that $\varpi$ is a $\mathfrak{G}_{(n_1,n_2),(1,0)}$-geodesic. Here $n_i =[F_{\sigma_i}:\mathbb{R}]$. In the following, we always take  the LLL-reduced set $\mathcal{L}=\mathcal{L}_{3/4}$ as a fundamental domain $\mathcal{F}$ which is weakly $\mathfrak{G}_{(n_1,n_2),(1,0)}$-convex by Theorem \ref{LLLweakconv}.

\begin{thm}[Rank one generalized Lagrange's theorem]\label{arch-main}~

 Let $\{(A_k, B_k, \varpi_k)\}_{k \in \mathbb{Z}}$ be the sequence obtained by Algorithm \ref{GCF} applied to $\varpi$. 
\begin{enumerate}[{\rm (i)}]
\item Then there exist $k_0 , k_1 \in \mathbb{N}, k_0<k_1$, and $\rho \in \mathbb{R}_{>1}$ such that $\varpi_{k_0}(t) = \varpi_{k_1}(\rho t)$ for any $t \in \mathbb{R}_{>0}$.
\item We have $B_{k_1}B_{k_0}^{-1} \in \Gamma_{\varpi}= \Gamma_{w}$ and $\epsilon := \varphi (B_{k_1}B_{k_0}^{-1}) \in \mathcal{O}_{w}^1$ is a non-torsion element satisfying $|\sigma_1(\epsilon)| =\rho$. In particular, $\epsilon$ generates a finite index subgroup of $\mathcal{O}_{w}^1$.
\end{enumerate}
\end{thm}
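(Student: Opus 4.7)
The plan is to deduce both parts of the theorem by feeding the Heegner object $\varpi$ into the general geodesic-algorithm machinery of Theorem \ref{geodalgo}, and then reading off the arithmetic content through Corollary \ref{corunitperiod} together with the rank computation of Proposition \ref{orderunits}(iv).

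For part (i), I would first note that in all three cases under consideration (real quadratic, complex cubic, totally imaginary quartic) one has $r = 2$, so Proposition \ref{orderunits}(iv) gives $\mathrm{rank}_{\mathbb{Z}}\,\Gamma_w = g_\infty - 1 = 1$. Pick any non-torsion $A \in \Gamma_w \subset SL_n(\mathbb{Z})$ and set $\alpha := \varphi(A) \in \mathcal{O}_w^1$. By Proposition \ref{unitisperiod} one has $A\varpi_\infty(t_1,t_2) = \varpi_\infty(|\sigma_1(\alpha)|t_1,|\sigma_2(\alpha)|t_2)$; translating through the identification $(\mathbb{R}_{>0}^2)^1 \simeq \mathbb{R}_{>0}$ built into the definition of $\varpi$, this reads $A\varpi(t) = \varpi(\epsilon t)$ for a scalar $\epsilon > 0$. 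Since $\alpha$ is non-torsion and $L\colon \mathcal{O}_w^1/(\mathrm{tor}) \hookrightarrow (\mathbb{R}_{>0}^2)^1$ is injective, $\epsilon \neq 1$; after possibly replacing $A$ by $A^{-1}$, we arrange $\epsilon > 1$. This puts us in the hypothesis of Theorem \ref{geodalgo}, whose conclusion (ii) is exactly the statement of (i) of the present theorem (the applicability of Algorithm \ref{GCF} itself to $\varpi$ is guaranteed by Theorem \ref{LLLweakconv}(ii), since after the substitution $\tau = t^2$ the curve $\varpi$ is a $\mathfrak{G}_{(n_1,n_2),(1,0)}$-geodesic).

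For part (ii), the relation $\varpi_{k_0}(t) = \varpi_{k_1}(\rho t)$ together with $\varpi_{k_i}(t) = B_{k_i}^{-1}\varpi(t)$ rewrites as
\[
(B_{k_1}B_{k_0}^{-1})\varpi(t) = \varpi(\rho t),
\]
so that $(B_{k_1}B_{k_0}^{-1},\rho)$ is a period of $\varpi$ in the sense of Definition \ref{periods}. Corollary \ref{corunitperiod} identifies the period group with $\Gamma_w$, so $B_{k_1}B_{k_0}^{-1} \in \Gamma_w$ and $\epsilon := \varphi(B_{k_1}B_{k_0}^{-1})$ lies in $\mathcal{O}_w^1$ with $L(\epsilon) = \rho$ (read through the isomorphism $(\mathbb{R}_{>0}^2)^1 \simeq \mathbb{R}_{>0}$). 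The equality $|\sigma_1(\epsilon)| = \rho$ is then a direct unraveling of this isomorphism applied to $L(\epsilon) = (|\sigma_1(\epsilon)|, |\sigma_2(\epsilon)|)$, using $|N_{F/\mathbb{Q}}(\epsilon)| = 1$. Since $\rho > 1$, $\epsilon$ cannot be a root of unity; non-torsionness combined with $\mathrm{rank}\,\mathcal{O}_w^1 = 1$ (Proposition \ref{orderunits}(iv)) yields that $\epsilon$ generates a finite-index subgroup of $\mathcal{O}_w^1$.

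There is no genuine obstacle here: once the preparatory results of Sections \ref{heeg} and \ref{gcf1} are in place, the proof is a short concatenation of Proposition \ref{orderunits}(iv), Proposition \ref{unitisperiod}, Theorem \ref{geodalgo}, and Corollary \ref{corunitperiod}. The only point requiring mild care is bookkeeping the one-variable parametrization of $(\mathbb{R}_{>0}^2)^1$ used implicitly in defining $\varpi$, both to verify the hypothesis $\epsilon \neq 1$ in Theorem \ref{geodalgo} and to match the scalar $\rho$ produced by the algorithm with the archimedean absolute value $|\sigma_1(\epsilon)|$.
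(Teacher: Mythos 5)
Your proposal is correct and follows essentially the same route as the paper: both parts reduce to feeding the Heegner object into Theorem \ref{geodalgo} and then identifying the resulting period with a unit via Corollary \ref{corunitperiod}. The only (immaterial) difference is that you produce the nontrivial period needed as input to Theorem \ref{geodalgo} from Proposition \ref{orderunits}(iv) together with Proposition \ref{unitisperiod}, whereas the paper cites the compactness statement of Theorem \ref{perHeeg} for the same purpose; these carry the same Dirichlet-type content.
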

\begin{proof}
(i) This is clear from Proposition \ref{periodisunit}, Theorem \ref{perHeeg} and Theorem \ref{geodalgo}.

(ii) Since $\varpi_k = B_k^{-1}\varpi$, (i) implies $B_{k_1}B_{k_0}^{-1} \varpi(t)=\varpi (\rho t)$ $\forall t \in \mathbb{R}_{>0}$. Thus $B_{k_1}B_{k_0}^{-1} \in \Gamma_{\varpi}$. The rest follows from Proposition \ref{unitisperiod} and Corollary \ref{corunitperiod}.
\end{proof}
\begin{rmk}
It is unclear whether the minimal period of the algorithm always provides a fundamental unit, i.e. a generator of $\mathcal{O}_{w}^1/({\rm tor})$, because, now, the fundamental domain $\mathcal{F}$ is not a system of representatives.
\end{rmk}
\begin{rmk}
When we calculate $~\epsilon~$ in (ii), it is more convenient to consider $A_{k_0+1}\cdots A_{k_1} = B_{k_0}^{-1}B_{k_1} \in \Gamma_{B_{k_0}^{-1}\varpi}=B_{k_0}^{-1}\Gamma_{\varpi}B_{k_0}$. We easily verify $\epsilon:= \varphi_{w} (B_{k_1}B_{k_0}^{-1})= \varphi_{B_{k_0}^{-1}w}(B_{k_0}^{-1}B_{k_1})$.
\end{rmk}

\paragraph{Number fields with rank one $\chi$-component of unit group}~\\
Next, we extend the above result to $\chi$-component of Heegner objects. We keep the notations in Section \ref{sectchi}.
Let $F'$ be a number field with at most one complex archimedean place. We denote by $v_1',\dots, v_g'$ the archimedean places of $F'$, and suppose $v_2', \dots, v_g'$ are real. Let $F/F'$ be a quadratic extension such that $v_1'$ splits in $F/F'$ and $v_2', \dots , v_g'$ ramify in $F/F'$. We denote by $v_{11}, v_{12} $ the places of $F$ above $v_1'$, and by $v_i ~(i = 2, \dots ,g)$ the place of $F$ above $v_i'$. Set $d=[F':\mathbb{Q}]$ and $n=2d=[F:\mathbb{Q}]$.

 Let $w = {}^t\!(w _1 \cdots w _n)$ be a basis of $F$ over $\mathbb{Q}$, and let 
  $$\varpi : T^1:=( \mathbb{R}_{>0}^{g+1})^1  \rightarrow \mathfrak{h}^n; (t_{11},t_{12},t_2,\dots, t_g ) \mapsto [t_{11}v_{11}(w) ~ t_{12}v_{12}(w) ~ t_{2}v_{2}(w) ~  \cdots t_gv_g(w)] $$
be the ($\{\infty\}$-)Heegner object associated to $w$. Note that this definition does not depend on the choice of embeddings. Then, $(T^1)^{\chi} = \{(t,1/t,1, \dots, 1) \in T^1\ |\  t \in \mathbb{R}_{>0}\}$, and
\begin{eqnarray*}\varpi^{\chi} : (T^1)^{\chi} \simeq \mathbb{R}_{>0} \rightarrow \mathfrak{h}^n; t &\mapsto& [v_{11}(w)t ~ v_{12}(w)/t ~ v_{2}(w) ~  \cdots v_g(w)] \\
&& = [v_{11}(w)t^2 ~ v_{12}(w) ~ v_{2}(w)t ~  \cdots v_g(w)t]\end{eqnarray*}
is the $\chi$-component of the Heegner object. We see $\varpi^{\chi}$ is a $\mathfrak{G}_{(1,1,g-1),(2,0,1)}$-geodesic.
 
 \begin{thm}[$\chi$-rank one generalized Lagrange's theorem]\label{chiarch-main}~

 Let $\{(A_k, B_k, \varpi_k^{\chi})\}_{k \in \mathbb{Z}}$ be the sequence obtained by Algorithm \ref{GCF} applied to $\varpi^{\chi}$. 
\begin{enumerate}[{\rm (i)}]
\item Then there exist $k_0 , k_1 \in \mathbb{N}, k_0 < k_1$, and $\rho \in \mathbb{R}_{>1}$ such that $\varpi_{k_0}^{\chi}(t) = \varpi_{k_1}^{\chi}(\rho t)$ for any $t \in \mathbb{R}_{>0}$.
\item We have $B_{k_1}B_{k_0}^{-1} \in \Gamma_{\varpi}(\chi) \subset \Gamma_{w}$ and $\epsilon := \varphi (B_{k_1}B_{k_0}^{-1}) \in \mathcal{O}_{w}^1$ is a non-torsion element satisfying $|v_{11}(\epsilon)|=\rho$. In particular, $\epsilon$ generates a finite index subgroup of $(\mathcal{O}_{w}^1/({\rm tor}))^{\chi}$.
\end{enumerate}
\end{thm}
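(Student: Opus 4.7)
The plan is to repeat the strategy of Theorem \ref{arch-main}, substituting the $\chi$-component machinery of Section \ref{sectchi} for the full version. Under the identification $(T^{1})^{\chi}\simeq\mathbb{R}_{>0}$ given by $t\mapsto(t,1/t,1,\dots,1)$, the $\chi$-Heegner object $\varpi^{\chi}$ is, as recorded just above the statement, a $\mathfrak{G}_{(1,1,g-1),(2,0,1)}$-geodesic on $\mathfrak{h}^{n}$. Theorem \ref{LLLweakconv}(ii) therefore guarantees that $\mathcal{L}=\mathcal{L}_{3/4}$ is weakly $\mathfrak{G}_{(1,1,g-1),(2,0,1)}$-convex, so Algorithm \ref{GCF} is well-defined on $\varpi^{\chi}$ and produces the sequence $\{(A_{k},B_{k},\varpi^{\chi}_{k})\}_{k\in\mathbb{Z}}$.

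For (i), I would invoke Theorem \ref{geodalgo}: it suffices to exhibit a non-torsion $A\in\Gamma$ and $\rho_{0}\neq 1$ with $A\varpi^{\chi}(t)=\varpi^{\chi}(\rho_{0}t)$. Counting archimedean places under our hypotheses, $\mathrm{rank}\,\mathcal{O}_{F}^{\times}=g$ while $\mathrm{rank}\,\mathcal{O}_{F'}^{\times}=g-1$, so the $\chi$-component $(\mathcal{O}_{w}^{1}/({\rm tor}))^{\chi}$ has rank one; equivalently, Theorem \ref{chiperiodicity} forces $\Lambda^{\chi}$ to be a non-trivial lattice in the one-dimensional group $(T^{1})^{\chi}$. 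By Proposition \ref{chiunitperiod}(ii), $\Gamma_{\varpi}(\chi)/({\rm tor})$ is identified with this rank-one group, so there is a non-torsion $A\in\Gamma_{\varpi}(\chi)$. Setting $\epsilon_{0}:=\varphi(A)\in\mathcal{O}_{w}^{1}$, Proposition \ref{unitisperiod} gives $A\varpi^{\chi}(t)=\varpi^{\chi}(\rho_{0}t)$ with $\rho_{0}=|v_{11}(\epsilon_{0})|$. The $\chi$-constraint forces $|v_{12}(\epsilon_{0})|=|v_{11}(\epsilon_{0})|^{-1}$ and $|v_{i}(\epsilon_{0})|=1$ for $i\geq 2$, so $\rho_{0}=1$ would make all archimedean absolute values of $\epsilon_{0}$ equal to one, i.e., $\epsilon_{0}$ a root of unity, contradicting non-torsion. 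Theorem \ref{geodalgo} then produces the desired $k_{0}<k_{1}$ in $\mathbb{N}$ and $\rho>1$ with $\varpi^{\chi}_{k_{0}}(t)=\varpi^{\chi}_{k_{1}}(\rho t)$.

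For (ii), the relation $\varpi^{\chi}_{k}=B_{k}^{-1}\varpi^{\chi}$ combined with (i) yields $B_{k_{1}}B_{k_{0}}^{-1}\varpi^{\chi}(t)=\varpi^{\chi}(\rho t)$ for all $t\in\mathbb{R}_{>0}$, so by definition $B_{k_{1}}B_{k_{0}}^{-1}\in\Gamma_{\varpi}(\chi)\subset\Gamma_{w}$ (the second inclusion by Proposition \ref{chiunitperiod}(i)). Setting $\epsilon:=\varphi(B_{k_{1}}B_{k_{0}}^{-1})\in\mathcal{O}_{w}^{1}$, Proposition \ref{unitisperiod} yields $|v_{11}(\epsilon)|=\rho>1$, so $\epsilon$ is non-torsion. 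Since $\varphi$ identifies $\Gamma_{\varpi}(\chi)/({\rm tor})$ with the rank-one group $(\mathcal{O}_{w}^{1}/({\rm tor}))^{\chi}$ by Proposition \ref{chiunitperiod}(ii), the non-torsion element $\epsilon$ generates a finite index subgroup, finishing (ii).

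The substantive content is essentially all imported from Section \ref{sectchi} and the preceding parts of Section \ref{gcf}; the only slightly subtle step is the elementary verification that the $\chi$-constraint, combined with $\epsilon_{0}$ being non-torsion, forces $|v_{11}(\epsilon_{0})|\neq 1$ --- without this, Theorem \ref{geodalgo} could not be triggered, as it needs $\rho_{0}\neq 1$.
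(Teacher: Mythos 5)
Your proof is correct and follows essentially the same route as the paper's (much terser) argument: Theorem \ref{chiperiodicity} plus Proposition \ref{unitisperiod} to produce a non-torsion period of $\varpi^{\chi}$ with $\rho_0\neq 1$, Theorem \ref{geodalgo} for (i), and Propositions \ref{unitisperiod} and \ref{chiunitperiod} together with Corollary \ref{corunitperiod} for (ii). The extra detail you supply (the weak convexity check for $\mathfrak{G}_{(1,1,g-1),(2,0,1)}$ and the Kronecker-type argument that the $\chi$-constraint forces $|v_{11}(\epsilon_0)|\neq 1$ for non-torsion $\epsilon_0$) is exactly what the paper leaves implicit.
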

\begin{proof}
(i) This is clear from Proposition \ref{periodisunit}, Theorem \ref{chiperiodicity} and Theorem \ref{geodalgo}.

(ii) Since $\varpi_k = B_k^{-1}\varpi^{\chi}$, (i) implies $B_{k_1}B_{k_0}^{-1} \varpi^{\chi}(t)=\varpi^{\chi}(\rho t)$ $\forall t \in \mathbb{R}_{>0}$. Thus $B_{k_1}B_{k_0}^{-1} \in \Gamma_{\varpi}(\chi)$. The rest follows from Proposition \ref{unitisperiod}, Corollary \ref{corunitperiod} and Proposition \ref{chiunitperiod}.
\end{proof}

We present some numerical examples in Section \ref{sectex}.

\section{$\{\infty, p\}$-continued fractions}\label{p-gcf}
In this section, we extend the idea of geodesic continued fraction to $S$-ad\'elic setting. We restrict ourselves to the case where $n=2$ and $S=\{\infty,p\}$ for a prime number $p$ since, at present, this is the only case in which we can give satisfactory applications to number fields. In the following, we always assume $S=\{\infty,p\}$.

 Let $p$ be a prime number and let $\mathfrak{h}\times \mathfrak{h}_p^2$ be the generalized $\{\infty,p\}$-upper half space defined in Section \ref{defheeg}. Here $\mathfrak{h}=\{z \in \mathbb{C} \ |\  \Im(z) >0 \}$ is the usual Poincar\'e upper half plane identified with $\mathfrak{h}_{\infty}^2$ by
 $$\mathfrak{h} \simeq \mathfrak{h}_{\infty}^2; z=x+iy \mapsto 
 \left[
\begin{array}{cc}
  y&x       \\
  0&1    
\end{array}
\right] .$$
 
\subsection{Fundamental domain for the action of $SL_2(\mathbb{Z}[1/p])$ on $\mathfrak{h} \times \mathfrak{h}_p^2$}\label{sectp-FD}
 We recall the $p$-adic Iwasawa decomposition. 
\begin{thm}[Iwasawa decomposition for $GL_2(\mathbb{Q}_p)$]\label{p-iwa}~

For any $g \in GL_2(\mathbb{Q}_p)$, there exist $\lambda, \nu \in \mathbb{Z}, e \in \mathbb{Z}_{\geq 0} , u \in (\mathbb{Z}/p^e\mathbb{Z})^{\times}$, and $R \in GL_2(\mathbb{Z}_p)$ such that
$$g=p^{\lambda} 
\left(
\begin{array}{cc}
 p^{\nu} & \      \\
  \ &1    
\end{array}
\right)
\left(
\begin{array}{cc}
 p^{e} & \widetilde{u}      \\
  \ &1    
\end{array}
\right)R,
$$
where $\widetilde{u}$ is a lift of $u$ in $\mathbb{Z}_p$. Moreover, this presentation is unique up to the choice of $\widetilde{u}$.
\end{thm}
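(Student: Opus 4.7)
The plan is to prove this in two stages: first reduce to an upper-triangular matrix via the standard Iwasawa decomposition $GL_2(\mathbb{Q}_p)=B(\mathbb{Q}_p)\,GL_2(\mathbb{Z}_p)$, where $B$ is the subgroup of upper-triangular matrices, and then normalize the resulting matrix by further right-multiplications by elements of $GL_2(\mathbb{Z}_p)$ that preserve the upper-triangular shape. The Iwasawa decomposition itself follows from the fact that $GL_2(\mathbb{Z}_p)$ acts transitively on $\mathbb{P}^1(\mathbb{Q}_p)=\mathbb{P}^1(\mathbb{Z}_p)$: given $g\in GL_2(\mathbb{Q}_p)$, one picks $R_0\in GL_2(\mathbb{Z}_p)$ so that $gR_0^{-1}$ fixes the line $\mathbb{Q}_p e_1$, forcing $gR_0^{-1}\in B(\mathbb{Q}_p)$.

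Writing $g=\begin{pmatrix}a&b\\0&d\end{pmatrix}R_0$, I would next normalize the diagonal. Factoring $a=p^{v(a)}\alpha$, $d=p^{v(d)}\delta$ with $\alpha,\delta\in\mathbb{Z}_p^{\times}$ and using
$$\begin{pmatrix}a&b\\0&d\end{pmatrix}=\begin{pmatrix}p^{v(a)}&b/\delta\\0&p^{v(d)}\end{pmatrix}\begin{pmatrix}\alpha&0\\0&\delta\end{pmatrix},$$
the diagonal unit matrix can be absorbed into $R_0$ to yield some $R_1\in GL_2(\mathbb{Z}_p)$. Setting $\lambda:=v(d)$, $m:=v(a)-v(d)$ and $c:=b/d$ and factoring out $p^{\lambda}$, one obtains $g=p^{\lambda}\begin{pmatrix}p^m&c\\0&1\end{pmatrix}R_1$. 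To dispose of $c$, I would right-multiply by the upper unipotent $\begin{pmatrix}1&\beta\\0&1\end{pmatrix}\in GL_2(\mathbb{Z}_p)$, which changes $c$ into $c+p^m\beta$. If $v(c)\geq m$, choose $\beta=-c/p^m\in\mathbb{Z}_p$ to annihilate $c$, and set $\nu=m$, $e=0$, $\tilde u=0$; if $v(c)<m$, no choice of $\beta$ changes the valuation of the entry, so set $\nu=v(c)$, $e=m-\nu>0$ and $\tilde u=cp^{-\nu}\in\mathbb{Z}_p^{\times}$. In either case $\begin{pmatrix}p^m&c\\0&1\end{pmatrix}=\begin{pmatrix}p^{\nu}&0\\0&1\end{pmatrix}\begin{pmatrix}p^e&\tilde u\\0&1\end{pmatrix}$, producing the required form.

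For uniqueness, I would interpret the coset $g\cdot GL_2(\mathbb{Z}_p)$ as the $\mathbb{Z}_p$-lattice $L:=g\mathbb{Z}_p^2\subset\mathbb{Q}_p^2$ and read off the parameters as intrinsic invariants of $L$. Namely, the image of $L$ under projection to $\mathbb{Q}_p e_2$ is $\mathbb{Z}_p\cdot p^{\lambda}e_2$, the intersection $L\cap\mathbb{Q}_p e_1$ is $\mathbb{Z}_p\cdot p^{\lambda+\nu+e}e_1$, and for any preimage $x\in L$ of the generator $p^{\lambda}e_2$ of the projection, the $e_1$-component of $x$ is well defined modulo $p^{\lambda+\nu+e}\mathbb{Z}_p$; dividing by $p^{\lambda}$ recovers $p^{\nu}\tilde u\bmod p^{\nu+e}\mathbb{Z}_p$, which in turn pins down $\nu$ and the class $u=\tilde u\bmod p^e$. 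Two lifts $\tilde u,\tilde u'$ of $u$ differ by an element of $p^e\mathbb{Z}_p$, and the resulting two decompositions are related by right multiplication by $\begin{pmatrix}1&*\\0&1\end{pmatrix}\in GL_2(\mathbb{Z}_p)$, giving exactly the asserted ambiguity. The main obstacle I expect is the case analysis in the normalization of $c$: one must verify that the constraint $e\geq 0$ can always be met, which amounts to the elementary fact that $c+p^m\mathbb{Z}_p$ either contains $0$ or contains an element of valuation strictly less than $m$.
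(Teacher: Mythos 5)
Your proof is correct, but note that the paper does not actually prove this theorem: its entire ``proof'' is the citation to Goldfeld--Hundley, Proposition 4.2.1, so there is no in-paper argument to match yours against. Your route --- the coarse Iwasawa decomposition via transitivity of $GL_2(\mathbb{Z}_p)$ on $\mathbb{P}^1(\mathbb{Q}_p)$, then normalization of the diagonal and of the off-diagonal entry $c$ by right multiplication by unipotents in $GL_2(\mathbb{Z}_p)$, with uniqueness read off from intrinsic invariants of the lattice $L=g\mathbb{Z}_p^2$ --- is a clean, self-contained substitute for the reference, which obtains the same normal form by explicit column operations over $\mathbb{Z}_p$; your lattice-theoretic uniqueness argument is arguably more conceptual than checking that two normal forms differing by an element of $GL_2(\mathbb{Z}_p)$ must coincide. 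Two small points are worth tightening, neither of which affects correctness. First, in the uniqueness step the class $p^{\nu}\tilde u \bmod p^{\nu+e}\mathbb{Z}_p$ vanishes exactly when $e=0$, so in that case $\nu$ is not recovered as the valuation of the class but from the already-determined quantity $\lambda+\nu+e$ together with the vanishing; when $e>0$ the class is nonzero of valuation $\nu$ and your argument applies verbatim. Second, when $e=0$ the group $(\mathbb{Z}/p^e\mathbb{Z})^{\times}$ is trivial and every element of $\mathbb{Z}_p$ is a lift of $u$, which is why your choice $\tilde u=0$ and the stated ambiguity ``unique up to the choice of $\tilde u$'' are consistent with each other.
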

\begin{proof}
See \cite[Proposition 4.2.1]{goldfeld2}.
\end{proof}
\begin{rmk}
Note that any $z \in \mathfrak{h}_p^2 = GL_2(\mathbb{Q}_p)/\mathbb{Q}_p^{\times}GL_2(\mathbb{Z}_p)$ can be written uniquely in the form 
$z= \left[\left(
\begin{array}{cc}
 p^{\nu} & \      \\
  \ &1    
\end{array}
\right)
\left(
\begin{array}{cc}
 p^{e} & \widetilde{u}      \\
  \ &1    
\end{array}
\right)\right]=
\left[
\begin{array}{cc}
  p^{e+\nu}&p^{\nu}\widetilde{u}       \\
  \ &1    
\end{array}
\right] $
, with the same notations as in Theorem \ref{p-iwa}, up to the choice of $\widetilde{u}$. Here $[~]$ denotes the class in $\mathfrak{h}_p^2$ as in Section \ref{defheeg}.
\end{rmk}
\begin{lem}\label{lempope} With the notations above, the following equalities hold true in $\mathfrak{h}_p^2$.
\begin{enumerate}[{\rm (i)}]
\item $\left(
\begin{array}{cc}
  1&x     \\
  \ &1    
\end{array}
\right)
\left[
\begin{array}{cc}
  p^{e+\nu}&p^{\nu}\widetilde{u}       \\
  \ &1    
\end{array}
\right] =
\left[
\begin{array}{cc}
  p^{e+\nu}&p^{\nu}\widetilde{u}+x       \\
  \ &1    
\end{array}
\right]$ for $x \in \mathbb{Q}_p$.

\item $\left(
\begin{array}{cc}
  \ &-1       \\
  1 &\     
\end{array}
\right)
\left[
\begin{array}{cc}
  p^{e+\nu}&p^{\nu}\widetilde{u}       \\
  \ &1    
\end{array}
\right] =
\left[\left(
\begin{array}{cc}
  p^{-\nu}&\       \\
  \ &1    
\end{array}
\right)
\left(
\begin{array}{cc}
  p^{e}&-\widetilde{u^{-1}}       \\
  \ &1    
\end{array}
\right)
\right].$

\item $\left(
\begin{array}{cc}
  p^i&\        \\
  \ &p^{-i}     
\end{array}
\right)
\left[
\begin{array}{cc}
  p^{e+\nu}&p^{\nu}\widetilde{u}       \\
  \ &1    
\end{array}
\right] =
\left[\left(
\begin{array}{cc}
  p^{\nu +2i}&\       \\
  \ &1    
\end{array}
\right)
\left(
\begin{array}{cc}
  p^{e}&\widetilde{u}       \\
  \ &1    
\end{array}
\right)
\right] \text{ for } i \in \mathbb{Z}.$
\end{enumerate}
\end{lem}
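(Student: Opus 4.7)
The plan is to treat each identity as a computation in the quotient $\mathfrak{h}_p^2 = GL_2(\mathbb{Q}_p)/\mathbb{Q}_p^{\times} GL_2(\mathbb{Z}_p)$: for each part, I lift the two sides to representatives in $GL_2(\mathbb{Q}_p)$ and check that the lifts differ by right multiplication by an element of $\mathbb{Q}_p^{\times} GL_2(\mathbb{Z}_p)$.

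Parts (i) and (iii) are essentially mechanical. For (i), multiplying out $\begin{pmatrix}1 & x \\ 0 & 1\end{pmatrix}\begin{pmatrix}p^{e+\nu} & p^{\nu}\tilde u \\ 0 & 1\end{pmatrix}$ already yields the representative on the right-hand side, so no adjustment by $\mathbb{Q}_p^{\times} GL_2(\mathbb{Z}_p)$ is needed. For (iii), direct multiplication gives $\begin{pmatrix}p^{e+\nu+i} & p^{\nu+i}\tilde u \\ 0 & p^{-i}\end{pmatrix}$, which differs from the claimed representative $\begin{pmatrix}p^{e+\nu+2i} & p^{\nu+2i}\tilde u \\ 0 & 1\end{pmatrix}$ by the central scalar $p^i \in \mathbb{Q}_p^{\times}$; this dies in the quotient.

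The only substantive case is (ii). Here the left-hand side has representative $M_1 := \begin{pmatrix}0 & -1 \\ p^{e+\nu} & p^{\nu}\tilde u\end{pmatrix}$ and the right-hand side is represented by $M_2 := \begin{pmatrix}p^{e-\nu} & -p^{-\nu}\widetilde{u^{-1}} \\ 0 & 1\end{pmatrix}$. I would compute $M_2^{-1}M_1$ explicitly using $M_2^{-1} = \begin{pmatrix}p^{\nu-e} & p^{-e}\widetilde{u^{-1}} \\ 0 & 1\end{pmatrix}$; the result is
$$M_2^{-1} M_1 \;=\; p^{\nu}\begin{pmatrix}\widetilde{u^{-1}} & p^{-e}\bigl(\tilde u\,\widetilde{u^{-1}} - 1\bigr) \\ p^e & \tilde u\end{pmatrix}.$$
It then suffices to show that the matrix in parentheses lies in $SL_2(\mathbb{Z}_p)$, since the leading scalar $p^{\nu}$ lies in $\mathbb{Q}_p^{\times}$.

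The crux --- and the only step where the datum that $u \in (\mathbb{Z}/p^e\mathbb{Z})^{\times}$ (rather than an arbitrary $p$-adic unit) is really used --- is the congruence $\tilde u\,\widetilde{u^{-1}} \equiv 1 \pmod{p^e}$, which is built into the statement of Theorem \ref{p-iwa} via the constraint $\tilde u, \widetilde{u^{-1}} \in \mathbb{Z}_p$ lifting mutually inverse elements of $(\mathbb{Z}/p^e\mathbb{Z})^{\times}$. This congruence forces $p^{-e}(\tilde u\,\widetilde{u^{-1}} - 1) \in \mathbb{Z}_p$, so all four entries lie in $\mathbb{Z}_p$; and the determinant telescopes to $\tilde u\,\widetilde{u^{-1}} - (\tilde u\,\widetilde{u^{-1}} - 1) = 1$, putting the matrix in $SL_2(\mathbb{Z}_p)$. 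I expect this small $p^e$-congruence check to be the only non-routine point of the proof --- everything else is linear algebra over $\mathbb{Q}_p$.
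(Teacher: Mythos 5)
Your proof is correct and follows essentially the same route as the paper: parts (i) and (iii) are dismissed as immediate, and part (ii) is settled by exhibiting that the two representatives differ by right multiplication by an element of $\mathbb{Q}_p^{\times}SL_2(\mathbb{Z}_p)$, the key point in both cases being the congruence $\widetilde{u}\,\widetilde{u^{-1}}\equiv 1 \pmod{p^e}$. Indeed, your matrix $\bigl(\begin{smallmatrix}\widetilde{u^{-1}} & p^{-e}(\widetilde{u}\widetilde{u^{-1}}-1)\\ p^e & \widetilde{u}\end{smallmatrix}\bigr)$ is exactly the inverse of the unimodular matrix $\bigl(\begin{smallmatrix}\widetilde{u} & a\\ -p^e & \widetilde{u^{-1}}\end{smallmatrix}\bigr)$ (with $\widetilde{u}\widetilde{u^{-1}}+ap^e=1$) that the paper multiplies by.
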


\begin{proof}
(i) and (iii) are clear. We prove (ii). Let us fix a lift $\widetilde{u^{-1}} \in \mathbb{Z}_p$ of $u^{-1} \in (\mathbb{Z}/p^e\mathbb{Z})^{\times}$. Then we have $\widetilde{u}\widetilde{u^{-1}}+ap^e=1$ for some $a \in \mathbb{Z}_p$. Therefore, 

$\left(
\begin{array}{cc}
  \ &-1       \\
  1 &\     
\end{array}
\right)
\left(
\begin{array}{cc}
  p^{e+\nu}&p^{\nu}\widetilde{u}       \\
  \ &1    
\end{array}
\right) =
\left(
\begin{array}{cc}
  \ &-1   \\
  p^{e+\nu}&p^{\nu}\widetilde{u}        
\end{array}
\right) \equiv
\left(
\begin{array}{cc}
  \ &-1   \\
  p^{e+\nu}&p^{\nu}\widetilde{u}        
\end{array}
\right)
\left(
\begin{array}{cc}
 \widetilde{u}&a   \\
 -p^e&\widetilde{u^{-1}}      
\end{array}
\right) \\
=
\left(
\begin{array}{cc}
  p^e&-\widetilde{u^{-1}}   \\
  0&p^{\nu}(\widetilde{u}\widetilde{u^{-1}}+ap^e)        
\end{array}
\right) \equiv
\left(
\begin{array}{cc}
  p^{-\nu}&\       \\
  \ &1    
\end{array}
\right)
\left(
\begin{array}{cc}
  p^{e}&-\widetilde{u^{-1}}       \\
  \ &1    
\end{array}
\right)
$ (modulo $\mathbb{Q}_p^{\times}GL_2(\mathbb{Z}_p))$.
\end{proof}

\begin{prop}\label{p-FD}
 Let $\mathcal{D}=\mathcal{D}_0 := \{z \in \mathfrak{h}\  |\  |z|>1, -1/2 \leq \Re z < 1/2\} \cup \{e^{i \theta} \ |\   \theta \in [\pi /2, 2\pi /3] \}$, which is a fundamental domain for the action of $SL_2(\mathbb{Z})$ on $\mathfrak{h}$. 
Set $$\mathcal{D}_1:= 
\left(
\begin{array}{cc}
  1/p&\      \\
  \ & 1     
\end{array}
\right)
\mathcal{D}_0, \text{~}
 \mathcal{F}_0 := \mathcal{D}_0 \times \left\{
\left[
\begin{array}{cc}
  1&\       \\
  \ &1    
\end{array}
\right]
\right\},\text{ and }
\mathcal{F}_1:=\mathcal{D}_1 \times \left\{
\left[
\begin{array}{cc}
  1/p&\       \\
  \ &1    
\end{array}
\right]
\right\}. $$
Then $\mathcal{F}:=\mathcal{F}_0 \sqcup \mathcal{F}_1$ gives a fundamental domain for the action of $SL_2(\mathbb{Z}[1/p])$ on $\mathfrak{h} \times \mathfrak{h}_p^2$, that is, $\mathcal{F}$ satisfies the following:
\begin{enumerate}[{\rm (i)}]
\item tiling property: $$\mathfrak{h}\times \mathfrak{h}_p^2 = \bigcup_{\gamma \in SL_2(\mathbb{Z}[1/p])} \gamma \mathcal{F}.$$
\item finiteness: the set $\{\gamma \in SL_2(\mathbb{Z}[1/p])\ |\ \gamma \mathcal{F} \cap \mathcal{F} \neq \emptyset \}$ is a finite set.
\end{enumerate} 
\end{prop}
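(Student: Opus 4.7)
The plan is to prove the two conditions separately. For the tiling property, the strategy is to use the $p$-adic Iwasawa decomposition (Theorem \ref{p-iwa}) together with the explicit transformation rules in Lemma \ref{lempope} to bring the $p$-adic coordinate of any point $(z,w)\in \mathfrak{h}\times\mathfrak{h}_p^2$ to one of the two canonical shapes $\left[\begin{smallmatrix}1 & \\ & 1\end{smallmatrix}\right]$ or $\left[\begin{smallmatrix}1/p & \\ & 1\end{smallmatrix}\right]$, and then, as a second stage, to use the corresponding stabilizer in $SL_2(\mathbb{Z}[1/p])$ to move the archimedean coordinate into $\mathcal{D}_0$ or $\mathcal{D}_1$ respectively.

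Concretely, write $w=\left[\begin{smallmatrix}p^{e+\nu} & p^{\nu}\tilde u\\ & 1\end{smallmatrix}\right]$ with the lift $\tilde u$ chosen in $\mathbb{Z}\subset \mathbb{Z}[1/p]$. I would first apply the unipotent $\left(\begin{smallmatrix}1 & -p^{\nu}\tilde u\\ & 1\end{smallmatrix}\right)\in SL_2(\mathbb{Z}[1/p])$, which by Lemma \ref{lempope}(i) kills the off-diagonal entry (and simply shifts $z$ by a real number), leaving $w=\left[\begin{smallmatrix}p^{e+\nu} & \\ & 1\end{smallmatrix}\right]$. I would next apply $\left(\begin{smallmatrix}p^i & \\ & p^{-i}\end{smallmatrix}\right)$ for suitable $i\in\mathbb{Z}$: by Lemma \ref{lempope}(iii) the exponent $e+\nu$ is replaced by $e+\nu+2i$, so we can arrange $e+\nu\in\{0,-1\}$. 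This lands $w$ in one of the two target points. It then remains to observe that the stabilizer of $\left[\begin{smallmatrix}1 & \\ & 1\end{smallmatrix}\right]$ in $SL_2(\mathbb{Z}[1/p])$ is exactly $SL_2(\mathbb{Z})$ (if $\gamma=\lambda h$ with $h\in GL_2(\mathbb{Z}_p)$, the relation $\lambda^2\det h=1$ forces $\lambda\in\mathbb{Z}_p^{\times}$, hence $\gamma$ has entries in $\mathbb{Z}[1/p]\cap\mathbb{Z}_p=\mathbb{Z}$), and that the stabilizer of $\left[\begin{smallmatrix}1/p & \\ & 1\end{smallmatrix}\right]$ is the conjugate $\mathrm{diag}(1/p,1)\,SL_2(\mathbb{Z})\,\mathrm{diag}(p,1)$, whose action on $\mathfrak{h}$ admits $\mathcal{D}_1=\mathrm{diag}(1/p,1)\mathcal{D}_0$ as fundamental domain by direct transport of structure. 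The classical fundamental-domain property of $\mathcal{D}_0$ for $SL_2(\mathbb{Z})$ then finishes the reduction.

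For the finiteness property, I would first show that the cross terms vanish: $\gamma\mathcal{F}_0\cap\mathcal{F}_1=\emptyset$ and $\gamma\mathcal{F}_1\cap\mathcal{F}_0=\emptyset$ for every $\gamma\in SL_2(\mathbb{Z}[1/p])$. Indeed, such a nonempty intersection would force $\gamma\cdot\left[\begin{smallmatrix}1 & \\ & 1\end{smallmatrix}\right] = \left[\begin{smallmatrix}1/p & \\ & 1\end{smallmatrix}\right]$, hence $\gamma = \lambda\,\mathrm{diag}(1/p,1)\,R$ for some $\lambda\in\mathbb{Q}_p^{\times}$ and $R\in GL_2(\mathbb{Z}_p)$; applying $v_p$ to $\det\gamma=1$ gives $2v_p(\lambda)=1$, an impossibility. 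Only the diagonal cases $\gamma\mathcal{F}_i\cap\mathcal{F}_i$ can contribute; in each such case $\gamma$ must lie in the stabilizer identified above, so the condition reduces to $g\mathcal{D}_0\cap\mathcal{D}_0\neq\emptyset$ for some $g\in SL_2(\mathbb{Z})$, whose finiteness is the classical statement.

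The main obstacle is the bookkeeping in the tiling step: keeping careful track of how each of the three $SL_2(\mathbb{Z}[1/p])$-moves acts \emph{simultaneously} on the archimedean and $p$-adic factors, and verifying at each step that every parameter (in particular $-p^{\nu}\tilde u$ in the unipotent translation) indeed lies in $\mathbb{Z}[1/p]$. The vanishing of the cross terms, by contrast, is a clean parity-of-valuation obstruction and is the cleanest part of the argument.
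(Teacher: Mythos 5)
Your proposal is correct and follows essentially the same route as the paper's proof: normalize the $p$-adic coordinate to one of the two canonical points via the Iwasawa decomposition (Theorem \ref{p-iwa}) and Lemma \ref{lempope}, then use the stabilizer of that point (namely $SL_2(\mathbb{Z})$ or its conjugate by $\bigl(\begin{smallmatrix}1/p&\\&1\end{smallmatrix}\bigr)$) to move the archimedean coordinate into $\mathcal{D}_0$ or $\mathcal{D}_1$, and split the finiteness condition into the two diagonal cases. The only difference is that you make explicit the stabilizer computation and the valuation-parity argument ruling out $\gamma\mathcal{F}_0\cap\mathcal{F}_1\neq\emptyset$, both of which the paper leaves implicit.
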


\begin{proof}
 By Theorem \ref{p-iwa} and Lemma \ref{lempope}, for $z= (z_{\infty}, z_p) \in \mathfrak{h} \times \mathfrak{h}_p^2$ there exists $\gamma _p \in SL_2(\mathbb{Z}[1/p])$ such that $$\gamma _p z_p \in \left\{
\left[
\begin{array}{cc}
 1 &  \     \\
 \  & 1  
\end{array}
\right], 
\left[
\begin{array}{cc}
  1/p& \      \\
  \ &  1    
\end{array}
\right]
 \right\}.$$
 
 First, suppose $\gamma _p z_p =\left[
\begin{array}{cc}
 1 &  \     \\
 \  & 1  
\end{array}
\right]$. Then there exists $\gamma _{\infty} \in SL_2(\mathbb{Z})$ such that $\gamma _{\infty} \gamma _p z_{\infty} \in \mathcal{D}_0$.  Since the stabilizer of $\left[
\begin{array}{cc}
 1 &  \     \\
 \  & 1  
\end{array}
\right]$ in $SL_2(\mathbb{Z}[1/p])$ is $SL_2(\mathbb{Z})$, we get $\gamma _{\infty} \gamma _p z \in \mathcal{F}_0$. 

 Next, suppose $\gamma _p z_p =\left[
\begin{array}{cc}
 1/p &  \     \\
 \  & 1  
\end{array}
\right]$. Then, taking into account that the stabilizer of $\left[
\begin{array}{cc}
 1/p &  \     \\
 \  & 1  
\end{array}
\right]$ is $\left(
\begin{array}{cc}
 1/p &  \     \\
 \  & 1  
\end{array}
\right) SL_2(\mathbb{Z}) \left(
\begin{array}{cc}
 p &  \     \\
 \  & 1  
\end{array}
\right)$ and $\mathcal{D}_1$ is a fundamental domain of this group, there exists $\gamma_{\infty} \in \left(
\begin{array}{cc}
 1/p &  \     \\
 \  & 1  
\end{array}
\right) SL_2(\mathbb{Z}) \left(
\begin{array}{cc}
 p &  \     \\
 \  & 1  
\end{array}
\right)$ such that $\gamma_{\infty} \gamma_p z_{\infty} \in \mathcal{D}_1$, and we get $\gamma _{\infty} \gamma _p z \in \mathcal{F}_1$. 

Furthermore, 
\begin{multline*}
\left\{\gamma \in SL_2(\mathbb{Z}[1/p])\  |\  \gamma \mathcal{F} \cap \mathcal{F} \neq \emptyset \right\} =\\
 \left\{\gamma \in SL_2(\mathbb{Z})\  |\  \gamma \mathcal{D}_0 \cap \mathcal{D}_0 \neq \emptyset \right\} \cup \left\{\gamma \in \left(
\begin{array}{cc}
 1/p &  \     \\
 \  & 1  
\end{array}
\right) SL_2(\mathbb{Z}) \left(
\begin{array}{cc}
 p &  \     \\
 \  & 1  
\end{array}
\right) \ \Big|\  \gamma \mathcal{D}_1 \cap \mathcal{D}_1 \neq \emptyset \right\}
\end{multline*}
is a finite set.
\end{proof}

\subsection{$\{\infty, p\}$-geodesic continued fractions}\label{sectp-gcf}

\begin{dfn}\label{p-geo}
We define a $p$-geodesic on $\mathfrak{h} \times \mathfrak{h}_p^2$ to be a map of the following form
$$\varpi : p^{\mathbb{Z}} \rightarrow \mathfrak{h} \times \mathfrak{h}_p^2 \ ;\  p^{\nu} \mapsto \left(z, 
\left[
\begin{array}{cc}
 a& p^{2\nu}b \     \\
 c& p^{2\nu}d  
\end{array}
\right] \right) $$
for $z \in \mathfrak{h}$ and $\left(
\begin{array}{cc}
 a& b \     \\
 c& d  
\end{array}
\right) \in GL_2(\mathbb{Q}_p)$.

\end{dfn}

 As in Section \ref{gcf} we consider the following algorithm. Set $\Gamma := SL_2(\mathbb{Z}[1/p])$.

 \begin{algo}[$\{\infty, p\}$-geodesic continued fraction algorithm]\label{p-gcfalgo}~
 
  Let $\varpi : p^{\mathbb{Z}} \rightarrow \mathfrak{h} \times \mathfrak{h}_p^2$ be a $p$-geodesic.  
 \begin{list}{}{}
 \item[{\it Preparation}:] Set $u_0=1$. Take $B_0 \in \Gamma$ such that $\varpi (1) \in B_0 \mathcal{F}$ and set $\varpi _0 = B_0{}^{-1} \varpi $.
 \item[{\it Forward loop}:] For $\varpi _k$ ($k\geq0$) we define $\varpi _{k+1}$ as follows.
 	\begin{itemize}
	
	\item Determine $A_{k+1} \in \Gamma$ such that $\varpi _k(p^{k+1}) \subset A_{k+1} \mathcal{F}$. 
	\item Define $\varpi _{k+1} := A_{k+1}{}^{-1} \varpi _k$, $B_{k+1}:= B_kA_{k+1}$.
	\end{itemize}
 \item[{\it Backward loop}:] For $\varpi _k$ ($k\leq0$) we define $\varpi _{k-1}$ as follows.
 	\begin{itemize}
	
	\item Determine $A_{k} \in \Gamma$ such that $\varpi _k(p^{k-1}) \subset A_{k}^{-1} \mathcal{F}$.
	\item Define $\varpi _{k-1} := A_{k} \varpi _k$, $B_{k-1}:= B_kA_{k}^{-1}$.
	\end{itemize} 
 \end{list}
 \end{algo}
 In the following, we refer to this algorithm simply as the geodesic algorithm.

The following theorem is a $p$-geodesic version of Theorem \ref{geodalgo}.
 
 \begin{thm}\label{p-gcfper}
  Let $\varpi : p^{\mathbb{Z}} \rightarrow \mathfrak{h}_{\infty}^2 \times \mathfrak{h}_p^2$ be a $p$-geodesic. Let $\nu_0 \in \mathbb{Z}$. Suppose $\varpi$ is periodic for $\nu\geq \nu_0$ under the action of $SL_2(\mathbb{Z}[1/p])$, i.e. there exist $\mu_0 \in \mathbb{N}_{>0}$ and $A \in SL_2(\mathbb{Z}[1/p])$ such that $A\varpi(p^{\nu}) = \varpi(p^{\nu+\mu_0})$ for all $\nu \geq \nu_0$. 
  
  Let $\{(A_k,B_k,\varpi _k)\}_k$ be the sequence taken by Algorithm \ref{p-gcfalgo}. Then there exist $M,N \in \mathbb{N}, M<N$, and $\mu \in \mathbb{N}_{>0}$ such that $\varpi_M(p^{\nu}) = \varpi_N(p^{\nu+\mu \mu_0})$ for all $\nu \geq \nu_0$. 
 \end{thm}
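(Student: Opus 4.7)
The plan is to adapt the argument of Theorem \ref{geodalgo} to the discrete $p$-adic setting, where things are in fact cleaner because the parameter space $p^{\mathbb{Z}}$ is already discrete and no convexity issues arise. The key observation I would record first is that by an immediate induction along Algorithm \ref{p-gcfalgo}, for every $k \in \mathbb{Z}$ one has $\varpi_k(p^k) \in \mathcal{F}$, or equivalently $\varpi(p^k) \in B_k\mathcal{F}$: the preparation step gives $k=0$, and each forward (resp.\ backward) step is engineered so that $\varpi_{k+1}(p^{k+1}) \in \mathcal{F}$ (resp.\ $\varpi_{k-1}(p^{k-1}) \in \mathcal{F}$).

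With this in hand, the pigeonhole step goes as in the archimedean case. For each integer $m \geq 0$ large enough that $k_m := \nu_0 + m\mu_0$ lies in $\mathbb{N}$, the periodicity hypothesis gives $\varpi(p^{k_m}) = A^m \varpi(p^{\nu_0})$, and combining this with $\varpi(p^{\nu_0}) \in B_{\nu_0}\mathcal{F}$ and $\varpi(p^{k_m}) \in B_{k_m}\mathcal{F}$ yields
$$B_{k_m}^{-1} A^m B_{\nu_0} \in \{\gamma \in \Gamma \mid \gamma\mathcal{F} \cap \mathcal{F} \neq \emptyset\}.$$
The right-hand side is a finite set by the finiteness assertion of Proposition \ref{p-FD}, so by pigeonhole there exist indices $m_1 < m_2$ (both sufficiently large) with $B_{k_{m_1}}^{-1}A^{m_1}B_{\nu_0} = B_{k_{m_2}}^{-1}A^{m_2}B_{\nu_0}$, that is,
$$B_{k_{m_2}} B_{k_{m_1}}^{-1} = A^{m_2 - m_1}.$$

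Setting $M := k_{m_1}$, $N := k_{m_2}$, and $\mu := m_2 - m_1$, the claimed identity follows from the direct computation
$$\varpi_N(p^{\nu + \mu\mu_0}) = B_N^{-1}\varpi(p^{\nu + \mu\mu_0}) = B_N^{-1} A^{\mu}\varpi(p^{\nu}) = B_M^{-1}\varpi(p^{\nu}) = \varpi_M(p^{\nu}),$$
valid for all $\nu \geq \nu_0$ by iterating the periodicity hypothesis $\mu$ times. There is no serious obstacle here; this is essentially a verbatim transcription of the archimedean argument, with the $\{\infty,p\}$-adelic fundamental domain of Proposition \ref{p-FD} playing the role of the LLL-reduced set. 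The only bookkeeping point to monitor is to select $m$ from the start large enough that $k_m \in \mathbb{N}$, ensuring $M,N \in \mathbb{N}$ as required.
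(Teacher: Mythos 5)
Your proof is correct and follows essentially the same route as the paper's: the observation that $\varpi(p^k)\in B_k\mathcal{F}$ for all $k$, the pigeonhole argument on the finite set $\{\gamma \in SL_2(\mathbb{Z}[1/p])\ |\ \gamma\mathcal{F}\cap\mathcal{F}\neq\emptyset\}$ applied to $\varpi(p^{\nu_0})\in A^{-m}B_{\nu_0+m\mu_0}\mathcal{F}\cap B_{\nu_0}\mathcal{F}$, and the final computation are all exactly the paper's argument (the paper works with $A^{-k}B_{\mu_0 k+\nu_0}$ rather than your $B_{k_m}^{-1}A^m B_{\nu_0}$, an immaterial difference).
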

\begin{proof} For $k \in \mathbb{N}$, we have $B_{\mu_0k+\nu_0}^{-1}\varpi(p^{\mu_0k+\nu_0}) = B_{\mu_0k+\nu_0}^{-1} A^k \varpi(p^{\nu_0}) \in \mathcal{F}$, and hence $\varpi(p^{\nu_0}) \in A^{-k}B_{\mu_0k+\nu_0} \mathcal{F} \cap B_{\nu_0}\mathcal{F}$. On the other hand we have seen in Proposition \ref{p-FD} that $\{\gamma \in SL_2(\mathbb{Z}[1/p])\ |\linebreak \gamma \mathcal{F} \cap \mathcal{F} \neq \emptyset\}$ is a finite set. Therefore there exist $k,l \in \mathbb{N}, k<l$, such that $A^{-k}B_{\mu_0k+\nu_0}=A^{-l}B_{\mu_0l+\nu_0}$. 
  We get $\varpi_{\mu_0k+\nu_0}(p^{\nu}) = B_{\mu_0k+\nu_0}^{-1}\varpi(p^{\nu})=B_{\mu_0l+\nu_0}^{-1}A^{l-k}\varpi(p^{\nu}) = \varpi_{\mu_0l+\nu_0}(p^{\mu_0(l-k)+\nu})$ for $\nu \geq \nu_0$.
 \end{proof}

\subsection{$\{\infty ,p\}$-continued fractions}\label{sectp-cf}
 Next we give an explicit procedure of the above algorithm which is simple and similar to the classical continued fraction algorithm for real numbers. 
 
 We fix an isomorphism $\mathbb{C} \simeq \overline{\mathbb{Q}}_p$ of fields.
Let  $\mathfrak{h}$ be the Poincar\'e upper half plane as before, and let $\mathcal{D}$ be the fundamental domain for $SL_2(\mathbb{Z})$; $\mathcal{D}=\mathcal{D}_0 := \{z \in \mathfrak{h}\  |\  |z|>1, -1/2 \leq \Re z < 1/2\} \cup \{e^{i \theta} \ |\   \theta \in [\pi /2, 2\pi /3] \}$.
In the following, for $z \in \mathbb{C} = \overline{\mathbb{Q}}_p$, we denote by $\bar{z}$ the complex conjugate of $z$.

\begin{algo}[$\{\infty, p\}$-continued fraction algorithm]\label{p-cf}~

 For $z \in \mathfrak{h} \cap \mathbb{Q}_p \subset \mathbb{C} = \overline{\mathbb{Q}}_p$, we define its $\{\infty, p\}$-continued fraction expansion as follows.
 
 Set $x_1:= z$. For $x_k, k \geq1$, do the following.
\begin{list}{}{}
\item[{\it initialization}:]  If $x_k \in \mathbb{Z}_p$, set $z_k = x_k , \delta _k = +1$, otherwise, set $z_k =- 1/x_k \in \mathbb{Z}_p , \delta _k =-1$. 
\item[$p$-{\it adic reduction}:] Determine $a_{k} \in \{0, \dots , p^2-1 \}$ such that $z_k-a_{k} \equiv 0 \mod p^2$, and set $z_{k0} := (z_k-a_k)/p^2$. 
\item[$\infty$-{\it adic reduction}:] For $j \geq 0$, if $z_{kj} \in \mathcal{D}$, then set $x_{k+1} = z_{kj}$ and go to the initialization step.
If not, determine $b_{kj} \in \mathbb{Z}$ so that $-1/2 \leq \Re (z_{kj}-b_{kj}) < 1/2$ and set $z_{k j+1} = -1/(z_{kj}-b_{kj})$. 
\end{list}
Then $\infty$-adic reduction ends within a finite number of steps (See \cite[Lemma 2.3.1]{diamond}), and the algorithm is well defined.  We \underline{formally} express this expansion as
\begin{eqnarray*}
z &=&\delta _1( a_1 +p^2b_{10}- \cfrac{p^2}{b_{11}- \cfrac{1}{{\ddots}-\cfrac{1}{b_{1l_1}-\cfrac{1}{\delta _2(a_2+p^2b_{20}-\cfrac{p^2}{b_{21}-\cfrac{1}{\ddots}^{\ }})^{\delta _2 }}^{\ }}^{\ }}^{\ }}^{\ })^{\delta _1}\\
   &=& [\  \delta _1 ;\  a_1 ;\  b_{10}, \dots , b_{1l_1}  ;\   \delta _2 ; \ a_2 ;\   b_{20}, \dots , b_{2l_2}  ;  \dots ].
\end{eqnarray*}
Furthermore, we associate each step of the algorithm with an element in $SL_2(\mathbb{Z}[1/p])$ which corresponds to the linear fractional transformation of the step.  For $k\geq 1$, we define notations as follows.
\begin{list}{}{}
\item[{\it initialization}:] Set $D_k = \left(
\begin{array}{cc}
  1&    \\
  &1
\end{array}
\right)$ if $\delta _k =1$, $D_k = \left(
\begin{array}{cc}
  &-1    \\
  1&
\end{array}
\right)$ if $\delta _k=-1$.
\item[$p$-{\it adic reduction}:] Set $P_k=
 \left(
\begin{array}{cc}
  1/p&    \\
  &p
\end{array}
\right)
 \left(
\begin{array}{cc}
  1&-a_k    \\
  &1
\end{array}
\right)$.
\item[$\infty$-{\it adic reduction}:] Set $Q_k= \left(
\begin{array}{cc}
  0&-1    \\
  1&-b_{kl_k}
\end{array}
\right) \cdots 
\left(
\begin{array}{cc}
  0&-1    \\
  1&-b_{k0}
\end{array}
\right)$.
\end{list}
Moreover, we set $A_k^{-1}= Q_kP_kD_k$, $B_k=A_1\cdots A_k$. Then we have $z_k= D_k x_k$, $z_{k0} = P_k z_k$, $x_{k+1}=Q_k z_{k0}$, and $x_{k+1}= A_k^{-1}x_k = B_k^{-1}x_1$, where the action is a linear fractional transformation.
\end{algo}

 We claim the following theorem which is an analogue of Lagrange's theorem.
 
\begin{thm}\label{p-lag}
 If $z \in \mathfrak{h} \cap \mathbb{Q}_p \subset \mathbb{C} = \overline{\mathbb{Q}}_p$ is an imaginary quadratic irrational, then its $\{\infty, p\}$-continued fraction expansion becomes periodic. That is, there exist $k, l \in\mathbb{N}, k<l$, such that $x_k=x_l$.
\end{thm}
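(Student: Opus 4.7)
The plan is to identify Algorithm \ref{p-cf} as a concrete implementation of Algorithm \ref{p-gcfalgo} applied to the $\{\infty,p\}$-Heegner object attached to $z$, and then invoke Theorem \ref{p-gcfper} together with the periodicity results for Heegner objects (Proposition \ref{unitisperiod} and Corollary \ref{corunitperiod}).

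First I would attach to $z$ the basis $w={}^t\!(z,1)$ of $F=\mathbb{Q}(z)$. Since $F$ is imaginary quadratic and $z\in\mathbb{Q}_p$, the prime $p$ splits completely in $F/\mathbb{Q}$; in particular $F/\mathbb{Q}$ is unramified at $p$, and the construction of Section \ref{defheeg} produces an $\{\infty,p\}$-Heegner object $\varpi_S:T_S^1\to\mathfrak{h}\times\mathfrak{h}_p^2$. Because $F$ has a single complex archimedean place, one has $(\prod_{v\mid\infty}\mathbb{R}_{>0})^1=\{1\}$, so $T_S^1\cong p^{\mathbb{Z}}$, and a direct unwinding of the definition shows that $\varpi_S$ is, up to a permissible relabelling of the parameter, a $p$-geodesic in the sense of Definition \ref{p-geo} whose archimedean component is constantly equal to $z\in\mathfrak{h}$. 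By Proposition \ref{orderunits}(iv) the rank of $\Gamma_{w,S}$ is $(g_\infty-1)+(g_p-1)=0+1=1$, so Proposition \ref{unitisperiod} and Corollary \ref{corunitperiod} supply a non-trivial $A\in SL_2(\mathbb{Z}[1/p])$ and an integer $\mu_0\in\mathbb{N}_{>0}$ with $A\varpi_S(p^{\nu})=\varpi_S(p^{\nu+\mu_0})$ for all $\nu\in\mathbb{Z}$, which is precisely the hypothesis of Theorem \ref{p-gcfper}.

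The main obstacle is to verify that Algorithm \ref{p-cf} literally realises Algorithm \ref{p-gcfalgo} for $\varpi_S$, in the sense that one full cycle of Algorithm \ref{p-cf} (initialisation $D_k$, $p$-adic reduction $P_k$, then the $\infty$-adic reductions $Q_k$) is exactly one step of Algorithm \ref{p-gcfalgo}, producing matching $B_k$'s and ensuring $B_k^{-1}\varpi_S(p^{k})\in\mathcal{F}=\mathcal{F}_0\sqcup\mathcal{F}_1$ from Proposition \ref{p-FD}. I would carry this out by direct computation using Lemma \ref{lempope}: $D_k$ swaps the two $p$-adic embeddings precisely when needed to place the $p$-component of the current Heegner object in the standard class $[I]$ (Lemma \ref{lempope}(ii)); the $p$-adic reduction $P_k$, which is the product of the diagonal $\operatorname{diag}(1/p,p)$ with an integer translation by $-a_k$, advances the $p$-geodesic parameter by one step and relocates the $p$-component into the class $[\operatorname{diag}(1/p,1)]$ or back to $[I]$ via Lemma \ref{lempope}(i),(iii); and the $\infty$-adic reductions $Q_k\in SL_2(\mathbb{Z})$ adjust only the archimedean component into $\mathcal{D}$, leaving the $p$-component unchanged.

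Once this identification is in place, Theorem \ref{p-gcfper} applied to the periodic $p$-geodesic $\varpi_S$ yields integers $M<N$ and $\mu\in\mathbb{N}_{>0}$ such that $\varpi_M(p^{\nu})=\varpi_N(p^{\nu+\mu\mu_0})$; evaluating at the archimedean factor and using $\varpi_k=B_k^{-1}\varpi_S$ gives $B_M^{-1}z=B_N^{-1}z$ in $\mathfrak{h}$, which is exactly the identity $x_{M+1}=x_{N+1}$ in the notation of Algorithm \ref{p-cf}. Setting $k=M+1$ and $l=N+1$ gives the required periodicity $x_k=x_l$ of Theorem \ref{p-lag}.
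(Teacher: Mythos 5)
Your overall strategy is the same as the paper's: realize Algorithm \ref{p-cf} as an instance of the geodesic algorithm (Algorithm \ref{p-gcfalgo}), feed it the periodicity of the Heegner object, and conclude via Theorem \ref{p-gcfper}. However, there is one point where your plan, as stated, would break, and it is precisely the point the paper's auxiliary construction is designed to handle. You propose to run Algorithm \ref{p-gcfalgo} on $\varpi_S$ itself and to verify $B_k^{-1}\varpi_S(p^k)\in\mathcal{F}$ for the matrices $B_k$ produced by Algorithm \ref{p-cf}. But the $p$-component of $\varpi_S(p^{\nu})$ is the class of $\left(\begin{smallmatrix} z & p^{2\nu-e_0}\overline{z}\\ 1 & p^{2\nu-e_0}\end{smallmatrix}\right)$ with $e_0=v_p(z-\overline{z})$, and this equals the normalized class $\left[\begin{smallmatrix} p^{2\nu} & z\\ 0 & 1\end{smallmatrix}\right]$ only when $2\nu\geq e_0$ (Lemma \ref{lemp1}(ii)). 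Since $e_0$ can be arbitrarily large (e.g.\ $z=p^m\theta$ has $e_0=m$ when $\theta-\overline{\theta}\in\mathbb{Z}_p^{\times}$), for small $k$ the steps of Algorithm \ref{p-cf} — which always operate on the normalized form via $z_k\mapsto(z_k-a_k)/p^2$ — do not track $\varpi_S(p^k)$, and the claimed membership $B_k^{-1}\varpi_S(p^k)\in\mathcal{F}$ fails there. The paper avoids this by introducing the auxiliary $p$-geodesic $\varpi(p^{\nu})=\bigl(z,\left[\begin{smallmatrix} p^{2\nu} & z\\ 0 & 1\end{smallmatrix}\right]\bigr)$, proving the algorithm identification for $\varpi$ for all $k\geq 1$ (Lemma \ref{lemp2}, Propositions \ref{claim1} and \ref{claim2}), and then transferring periodicity from $\varpi_S$ to $\varpi$ only in the range $2\nu\geq e_0$ (Corollary \ref{auxgeod}) — which is exactly why Theorem \ref{p-gcfper} is stated with the eventual hypothesis ``$\nu\geq\nu_0$'' rather than periodicity for all $\nu$. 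Your argument is repairable along the same lines (periodicity is an eventual statement, so you may restrict to $2k\geq e_0$), but as written the identification step is asserted in a range where it is false.

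Two smaller omissions: the verification that one full cycle (initialization, $p$-adic reduction, $\infty$-adic reductions) lands the orbit back in $\mathcal{F}_0$ is the technical core of the proof and requires an actual induction resting on the computation of Lemma \ref{lemp2} (one must check that $cz+d\in\mathbb{Z}_p^{\times}$ so that the $SL_2(\mathbb{Z})$-moves do not disturb the $p$-component); you only sketch this. And the paper first reduces to $z\in\mathbb{Z}_p$ (the expansions of $z$ and $-1/z$ differ only in $\delta_1$), which is needed for the base case $D_1=I$ of that induction; your write-up does not address the case $z\notin\mathbb{Z}_p$.
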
 
We prove this theorem using the geodesic algorithm (Algorithm \ref{p-gcfalgo}) and the periodicity of Heegner objects (Theorem \ref{perHeeg}). 
First we relate Algorithm \ref{p-cf} to Algorithm \ref{p-gcfalgo}.

Let $z \in \mathfrak{h} \cap \mathbb{Q}_p \subset \mathbb{C} = \overline{\mathbb{Q}}_p$ and $z=[\  \delta _1 ;\  a_1 ;\  b_{10}, \dots , b_{1l_1}  ;\   \delta _2 ; \ a_2 ;\   b_{20}, \dots , b_{2l_2}  ;  \dots ]$ be its $\{\infty,p\}$-continued fraction expansion. We consider the following $p$-geodesic:
$$\varpi : p^{\mathbb{Z}} \rightarrow \mathfrak{h}\times \mathfrak{h}_p^2 \ ;\  p^{\nu} \mapsto \left(z, 
\left[
\begin{array}{cc}
  p^{2\nu}&z      \\
  0&1  
\end{array}
\right]\right).$$
We claim the following.

\begin{prop}\label{claim1} Let $z \in \mathfrak{h} \cap \mathbb{Z}_p \subset \mathbb{C} = \overline{\mathbb{Q}}_p$, and let the notations be as above.
\begin{enumerate}[{\rm (i)}]
\item $B_k^{-1}\varpi (p^k) \in \mathcal{F}_0= \mathcal{D} \times \left\{\left[
\begin{array}{cc}
  1&    \\
  &1
\end{array}
\right]\right\}$, for $k \geq 1$.
\item $D_{k+1}B_k^{-1}\varpi (p^{\nu}) = \left(z_{k+1}, 
\left[
\begin{array}{cc}
  p^{2\nu -2k}&z_{k+1}    \\
  0&1
\end{array}
\right]\right)$, for $\nu \geq k$.
\end{enumerate}
\end{prop}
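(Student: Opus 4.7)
My plan is to prove (ii) by induction on $k \geq 0$ and deduce (i) for $k \geq 1$ as the specialization $\nu = k$. The base case $k = 0$ is immediate: the hypothesis $z \in \mathbb{Z}_p$ forces $\delta_1 = 1$ and $D_1 = I$, so (ii) reduces to the tautology $\varpi(p^\nu) = (z, [p^{2\nu}, z; 0, 1])$. For the inductive step, assume (ii) at $k-1$. The action of $P_k$ on $(z_k, [p^{2(\nu-k+1)}, z_k; 0, 1])$ is routine: the archimedean coordinate becomes $z_{k0} = (z_k - a_k)/p^2$, and a direct matrix multiplication followed by rescaling by $p$ gives the $p$-adic coordinate $[p^{2(\nu-k)}, z_{k0}; 0, 1]$ for $\nu \geq k$. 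The nontrivial task is then to show $D_{k+1} Q_k$ sends this class to $[p^{2(\nu-k)}, z_{k+1}; 0, 1]$.

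The key tool is a general lemma: for any $M = \begin{pmatrix} a & b \\ c & d \end{pmatrix} \in SL_2(\mathbb{Z})$, $z \in \mathbb{Z}_p$, and $e \geq 0$, if $cz+d \in \mathbb{Z}_p^\times$ then $M \cdot [p^e, z; 0, 1] = [p^e, Mz; 0, 1]$ in $\mathfrak{h}_p^2$. This is verified by direct computation: forming $[p^e, Mz; 0, 1]^{-1} M [p^e, z; 0, 1]$ and using the identity $(az+b) - (cz+d)Mz = 0$ produces the lower-triangular matrix $\begin{pmatrix} 1/(cz+d) & 0 \\ cp^e & cz+d \end{pmatrix}$, which lies in $GL_2(\mathbb{Z}_p)$ precisely when $cz+d$ is a unit. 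Applying the lemma with $M = D_{k+1} Q_k$ and $z = z_{k0}$ reduces everything to verifying $cz_{k0} + d \in \mathbb{Z}_p^\times$.

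This unitness is the technical heart of the argument. By the Möbius-composition chain rule, $cz_{k0} + d$ equals, up to sign, the product $\prod_{j=0}^{l_k}(z_{kj} - b_{kj})$ when $\delta_{k+1} = 1$ and that same product times $x_{k+1}$ when $\delta_{k+1} = -1$; using $z_{k,j+1} = -1/(z_{kj} - b_{kj})$, one rewrites $v_p(cz_{k0}+d)$ as a signed sum of the $v_p(z_{kj})$. The crucial observation is that by the strong triangle inequality, whenever $v_p(z_{kj}) < 0$ one has $v_p(z_{kj} - b_{kj}) = v_p(z_{kj})$, and hence $v_p(z_{k,j+1}) = -v_p(z_{kj}) > 0$; so every negative-valuation term in the sequence $(v_p(z_{kj}))_j$ is immediately followed by a positive one of equal magnitude. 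A simple pairing argument — completed in the $\delta_{k+1} = -1$ case by the additional factor $x_{k+1}$ in $cz_{k0} + d$ that absorbs the trailing negative term — then yields $v_p(cz_{k0}+d) = 0$ in both cases, which finishes (ii).

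Finally, to derive (i) for $k$, specialize (ii) at $\nu = k$: the $p$-adic matrix becomes $[1, z_{k+1}; 0, 1]$, which equals $[I]$ because $z_{k+1} \in \mathbb{Z}_p$ places $\begin{pmatrix} 1 & z_{k+1} \\ 0 & 1 \end{pmatrix}$ in $GL_2(\mathbb{Z}_p)$. Left-multiplication by $D_{k+1}^{-1} \in SL_2(\mathbb{Z}_p)$ fixes the class $[I]$, so $B_k^{-1} \varpi(p^k) = (x_{k+1}, [I])$; and $x_{k+1} \in \mathcal{D}$ by the stopping rule of the $\infty$-adic reduction. Hence $B_k^{-1} \varpi(p^k) \in \mathcal{D} \times \{[I]\} = \mathcal{F}_0$.
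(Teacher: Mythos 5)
Your proof is correct, and its skeleton (induction on $k$, reduction of the $P_k$-step by an explicit matrix computation, and a key lemma describing when $M\in SL_2(\mathbb{Z})$ sends the class $\left[\begin{smallmatrix}p^e & z\\ 0&1\end{smallmatrix}\right]$ to $\left[\begin{smallmatrix}p^e & Mz\\ 0&1\end{smallmatrix}\right]$) is essentially the paper's proof, which proves (i) and (ii) together by induction and invokes Lemma \ref{lemp2} for the step $M=D_{k+1}Q_k$. The one place you genuinely diverge is what you call the ``technical heart'': verifying $cz_{k0}+d\in\mathbb{Z}_p^{\times}$. You prove this from scratch by the cocycle identity $cz_{k0}+d=\pm\prod_j(z_{kj}-b_{kj})$ (times $x_{k+1}$ when $\delta_{k+1}=-1$) and a valuation-pairing argument along the $\infty$-adic reduction; I checked the pairing (each $j$ with $v_p(z_{kj})<0$ is followed by $j+1$ with the opposite valuation, a positive-valuation term is always preceded by its negative partner, and the endpoints $v_p(z_{k0})\ge 0$, $v_p(x_{k+1})\ge 0$ resp.\ $<0$ ensure no pair is cut), and it does close. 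But this work is unnecessary: the paper's Lemma \ref{lemp2} is stated with the hypothesis $Mz\in\mathbb{Z}_p$ rather than $cz+d\in\mathbb{Z}_p^{\times}$, and derives the latter for free from the identity $\tfrac{1}{cz+d}=a-c\cdot Mz$ together with $z,Mz\in\mathbb{Z}_p$ and $c,d\in\mathbb{Z}$. In your situation $Mz_{k0}=z_{k+1}$ lies in $\mathbb{Z}_p$ \emph{by construction} --- the initialization step of Algorithm \ref{p-cf} defines $z_{k+1}$ precisely so that it lands in $\mathbb{Z}_p$ --- so the unitness of $cz_{k0}+d$ is immediate and your entire third paragraph can be replaced by one line. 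What your longer route buys is an independent confirmation that the $\infty$-adic reduction cannot destroy $p$-integrality, but as a proof of the proposition it is strictly more labor for the same conclusion.
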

We need the following lemma.

\begin{lem}\label{lemp2}
 Let $z \in \mathbb{Z}_p, 
 \left(
\begin{array}{cc}
  a&b      \\
  c&d 
\end{array}
\right) \in SL_2(\mathbb{Z})$ such that $\dfrac{az+b}{cz+d} \in \mathbb{Z}_p$. Then 

$$\left(
\begin{array}{cc}
  a&b     \\
  c&d
\end{array}
\right)
\left[
\begin{array}{cc}
  p^{\nu}&z      \\
  0&1
\end{array}
\right] =
\left[
\begin{array}{cc}
  p^{\nu}&\dfrac{az+b}{cz+d}    \\
  0&1  
\end{array}
\right]~\  {\it for}\  \nu \geq 0.$$
\end{lem}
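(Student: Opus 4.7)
The plan is to directly compute the matrix product on the left-hand side, then factor out on the right an element of $\mathbb{Q}_p^\times GL_2(\mathbb{Z}_p)$ so that the remaining factor matches the right-hand side representative. The key technical point, which I will handle first, is that the denominator $cz+d$ in the fractional linear transformation is automatically a $p$-adic unit under the hypotheses of the lemma.

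Concretely, first I would compute
$$\begin{pmatrix} a & b \\ c & d \end{pmatrix}\begin{pmatrix} p^\nu & z \\ 0 & 1 \end{pmatrix} = \begin{pmatrix} ap^\nu & az+b \\ cp^\nu & cz+d \end{pmatrix},$$
and look for a matrix $N \in GL_2(\mathbb{Z}_p)$ with
$$\begin{pmatrix} ap^\nu & az+b \\ cp^\nu & cz+d \end{pmatrix} = \begin{pmatrix} p^\nu & \dfrac{az+b}{cz+d} \\ 0 & 1 \end{pmatrix} N.$$
Solving this linear system, one finds
$$N = \begin{pmatrix} \dfrac{1}{cz+d} & 0 \\ cp^\nu & cz+d \end{pmatrix},$$
where the $(1,1)$-entry uses $ad-bc=1$. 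Clearly $\det N = 1$, and $cp^\nu, cz+d \in \mathbb{Z}_p$ once $\nu \geq 0$ (since $c,d \in \mathbb{Z}$ and $z \in \mathbb{Z}_p$), so the only remaining issue is whether $\tfrac{1}{cz+d} \in \mathbb{Z}_p$.

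The main step is therefore to prove $cz+d \in \mathbb{Z}_p^\times$ from the assumption that $\tfrac{az+b}{cz+d} \in \mathbb{Z}_p$. Suppose for contradiction that $cz+d \equiv 0 \pmod p$. Then we cannot have $c \equiv 0 \pmod p$, for otherwise $d \equiv 0 \pmod p$ as well, contradicting $ad-bc \equiv 1 \pmod p$. Thus $c$ is a $p$-adic unit and $z \equiv -d/c \pmod p$, which gives
$$az+b \equiv \frac{-ad+bc}{c} = \frac{-1}{c} \pmod p,$$
a $p$-adic unit. Hence $v_p(az+b) = 0 < v_p(cz+d)$, forcing $\tfrac{az+b}{cz+d} \notin \mathbb{Z}_p$, a contradiction. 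So $cz+d$ is a unit, $N \in GL_2(\mathbb{Z}_p)$, and passing to classes modulo $\mathbb{Q}_p^\times GL_2(\mathbb{Z}_p)$ yields the claimed equality in $\mathfrak{h}_p^2$.

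I do not foresee any substantive obstacle: the argument is a direct Iwasawa-style factorization, and the one non-trivial point is the $p$-adic-unit statement, which follows from a short reduction modulo $p$.
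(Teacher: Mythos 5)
Your proof is correct and follows essentially the same route as the paper: both exhibit the right-multiplier in $GL_2(\mathbb{Z}_p)$ (the paper via successive column operations modulo $\mathbb{Q}_p^{\times}GL_2(\mathbb{Z}_p)$, you via the explicit matrix $N$) and both hinge on showing $cz+d \in \mathbb{Z}_p^{\times}$. The only cosmetic difference is that the paper obtains the unit claim in one line from the identity $\frac{1}{cz+d} = a - c\cdot\frac{az+b}{cz+d}$, whereas you argue by contradiction after reducing modulo $p$; both are valid.
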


\begin{proof} 
By the assumption: $z, \dfrac{az+b}{cz+d} \in \mathbb{Z}_p$, we have $\dfrac{1}{cz+d} = a -c \dfrac{az+b}{cz+d} \in \mathbb{Z}_p$. In particular, $cz+d \in \mathbb{Z}_p^{\times}$. Then, keeping in mind that all the equalities are modulo $\mathbb{Q}_p^{\times}GL_2(\mathbb{Z}_p)$, 

\begin{eqnarray*}
{\it LHS} &=& 
 \left[
\begin{array}{cc}
  a p^{\nu}& az+b    \\
  c p^{\nu}& cz+d  
\end{array}
\right]  =
\left[
\begin{array}{cc}
  \dfrac{a}{cz+d}p^{\nu}&\dfrac{az+b}{cz+d}    \\
  \dfrac{c}{cz+d}p^{\nu}&1  
\end{array}
\right] \\ &=&
\left[
\begin{array}{cc}
  \dfrac{a}{cz+d}p^{\nu} - \dfrac{az+b}{cz+d} \dfrac{c}{cz+d}p^{\nu}&\dfrac{az+b}{cz+d}    \\
  0&1  
\end{array}
\right]  \\ &=&
\left[
\begin{array}{cc}
  \dfrac{p^{\nu}}{(cz+d)^2}&\dfrac{az+b}{cz+d}    \\
  0&1  
\end{array}
\right] = {\it RHS}. \\[-14mm]
\end{eqnarray*}
\end{proof}
~

\begin{proof}[Proof of Proposition \ref{claim1}]
 We prove {\rm (i)} and {\rm (ii)} simultaneously by induction on $k \geq 1$. Note that $D_1=I$ (identity matrix) and $z=x_1=z_1$ since we have assumed $z \in \mathbb{Z}_p$.
\begin{list}{}{}
\item[$\underline{k=1}$:] It is easy to see that,
$$P_1\left[
\begin{array}{cc}
  p^2&x_1    \\
  &1
\end{array}
\right]=\left[
\begin{array}{cc}
  1&    \\
  &1
\end{array}
\right], \text{ and } 
Q_1\left[
\begin{array}{cc}
  1&    \\
  &1
\end{array}
\right]=\left[
\begin{array}{cc}
  1&    \\
  &1
\end{array}
\right],$$ 
in $\mathfrak{h}_p^2$. On the other hand, we have $A_1^{-1}x_1=x_2 \in \mathcal{D}$. This shows {\rm (i)}. 

 Now $z_{10}=(z_1 -a_1)/p^2 \in \mathbb{Z}_p$, $D_2Q_1 \in SL_2(\mathbb{Z})$ and $z_2 =D_2Q_1 z_{10} \in \mathbb{Z}_p$. Therefore, by Lemma \ref{lemp2}, we have $$D_2Q_1P_1D_1\varpi (p^{\nu}) =D_2Q_1 \left(z_{10}, \left[
\begin{array}{cc}
  p^{2\nu -2}&z_{10}    \\
  0&1
\end{array}
\right]\right) =
\left(z_2, \left[
\begin{array}{cc}
  p^{2\nu -2}&z_2    \\
  0&1
\end{array}
\right]\right)~~(\nu \geq 1).$$ Here we use Lemma \ref{lempope} in the first equality. This shows {\rm (ii)}.
\item[$\underline{k\geq 2}$:] By the identity (ii) of the induction hypothesis for $k-1$, we prove this case by applying the argument in the case $k=1$ to $z'=z_k$ and $\varpi'(p^{\nu})= D_kB_{k-1}\varpi(p^{\nu-k+1})$. This completes the proof. \qedhere

\end{list}
\end{proof}
The following proposition relates Algorithm \ref{p-cf} to Algorithm \ref{p-gcfalgo}. 
\begin{prop}\label{claim2}
Let $z \in \mathfrak{h} \cap \mathbb{Z}_p \subset \mathbb{C} = \overline{\mathbb{Q}}_p$, and let the notations be as above. Let $B_0 \in \Gamma=SL_2(\mathbb{Z}[1/p])$ such that $\varpi(1) \in B_0\mathcal{F}$, and let $A_k' =B_{k-1}^{-1}B_k \in \Gamma$ $(k \geq1)$, i.e. $A_1'= B_0^{-1}B_1$, $A_k'=A_k$ $(k\geq 2)$. Then we can take $B_0$ as a preparation step and $\{(A_k',B_k,\varpi_k)\}_{k \geq 1}$ as a forward loop of Algorithm \ref{p-gcfalgo} for $\varpi$, where $\varpi_k = B_k^{-1} \varpi$.
\end{prop}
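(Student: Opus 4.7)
The plan is to unpack Algorithm \ref{p-gcfalgo} and verify directly that the data $(B_0, \{A_k', B_k, \varpi_k\}_{k \geq 1})$ produced from Algorithm \ref{p-cf} satisfies every bullet of the preparation and forward-loop steps. Since $\varpi_k$ is defined as $B_k^{-1}\varpi$ and $A_k'$ is defined as $B_{k-1}^{-1}B_k$, the recursive relations $\varpi_k = (A_k')^{-1}\varpi_{k-1}$ and $B_k = B_{k-1}A_k'$ demanded by the algorithm hold for free. Thus only two things require checking: that $A_k' \in \Gamma = SL_2(\mathbb{Z}[1/p])$, and that $\varpi_{k-1}(p^k) \in A_k'\mathcal{F}$ (equivalently $\varpi_k(p^k) \in \mathcal{F}$) for every $k \geq 1$. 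The $B_0$-condition $\varpi(1) \in B_0\mathcal{F}$ is imposed by hypothesis, so the preparation step is automatic.

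For the membership $A_k' \in \Gamma$, I will treat $k=1$ and $k \geq 2$ separately. For $k=1$, $A_1' = B_0^{-1}B_1 = B_0^{-1}A_1$ is a product of elements of $\Gamma$, hence lies in $\Gamma$. For $k \geq 2$, we have $A_k' = A_k$, and $A_k^{-1} = Q_kP_kD_k$ expresses $A_k^{-1}$ as a product of the matrices $D_k$ (which lies in $SL_2(\mathbb{Z})$), $P_k = \left(\begin{smallmatrix} 1/p & 0 \\ 0 & p \end{smallmatrix}\right)\left(\begin{smallmatrix} 1 & -a_k \\ 0 & 1 \end{smallmatrix}\right)$ (which lies in $SL_2(\mathbb{Z}[1/p])$), and $Q_k$ (which is a product of standard $SL_2(\mathbb{Z})$-generators coming from the $\infty$-adic reduction); hence $A_k \in \Gamma$.

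The condition $\varpi_k(p^k) = B_k^{-1}\varpi(p^k) \in \mathcal{F}$ for each $k \geq 1$ is exactly the content of Proposition \ref{claim1}(i), which tells us $B_k^{-1}\varpi(p^k) \in \mathcal{F}_0 \subset \mathcal{F}$. Applied with index $k$ replaced by $k+1$, this gives $\varpi_{k+1}(p^{k+1}) \in \mathcal{F}$ for every $k \geq 0$, which is precisely the forward-loop requirement $\varpi_k(p^{k+1}) \in A_{k+1}'\mathcal{F}$ once we rewrite it using $A_{k+1}' = B_k^{-1}B_{k+1}$.

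There is no real obstacle here: the proposition is essentially a translation lemma, repackaging the output of Algorithm \ref{p-cf} into the language of Algorithm \ref{p-gcfalgo}. The only substantive input is Proposition \ref{claim1}(i), which was already established; the remainder is matrix bookkeeping showing that the factors $D_k, P_k, Q_k$ appearing in the $\{\infty,p\}$-continued fraction algorithm sit inside $SL_2(\mathbb{Z}[1/p])$ and that their cumulative product reproduces the transitions $B_{k-1} \to B_k$ prescribed by the geodesic algorithm.
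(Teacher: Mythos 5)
Your proposal is correct and follows essentially the same route as the paper: the paper's proof is a one-line reduction to Proposition \ref{claim1}(i), observing that a run of Algorithm \ref{p-gcfalgo} is completely determined by the $B_k$, which are characterized by the condition $\varpi(p^k)\in B_k\mathcal{F}$. Your additional bookkeeping (the recursions holding by definition of $A_k'$ and $\varpi_k$, and the factors $D_k, P_k, Q_k$ lying in $SL_2(\mathbb{Z}[1/p])$) is correct but is left implicit in the paper.
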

\begin{proof}
This follows from Proposition \ref{claim1} (i) since Algorithm \ref{p-gcfalgo} is completely determined by $B_k$, and $B_k$ is characterized by the condition $\varpi(p^k) \in B_k \mathcal{F}$.
\end{proof}

Now, we prove Theorem \ref{p-lag}. Let $z \in \mathfrak{h} \cap \mathbb{Q}_p \subset \mathbb{C} = \overline{\mathbb{Q}}_p$ be an imaginary quadratic irrational. Then $F= \mathbb{Q}(z)$ is an imaginary quadratic field and $p$ splits completely in $F/\mathbb{Q}$. Let $z=[\  \delta _1 ;\  a_1 ;\  b_{10}, \dots , b_{1l_1}  ;\   \delta _2 ; \ a_2 ;\   b_{20}, \dots , b_{2l_2}  ;  \dots ]$ be the $\{\infty,p\}$-continued fraction expansion of $z$, and let $x_k,z_k,D_k,P_k$, etc. be as in Algorithm \ref{p-cf}. 
Let 
 $$\varpi_S:  p^{\mathbb{Z}} \rightarrow \mathfrak{h} \times \mathfrak{h}_p^2 \ ;\  p^{\nu} \mapsto \left(z, 
 \left[
\begin{array}{cc}
  z&p^{2\nu -e _0} \overline{z}     \\
  1&p^{2\nu -e _0} 
\end{array}
\right]\right),\   \text{ where } e _0 := v_p(z-\overline{z}),  $$
be the $S(=\{\infty,p\})$-Heegner object associated to the basis $w={}^t\!(z,1)$ of $F$ over $\mathbb{Q}$ with index $e_p=(0,-e_0)$. Let $\varpi$ be the $p$-geodesic as before:
$$\varpi : p^{\mathbb{Z}} \rightarrow \mathfrak{h}\times \mathfrak{h}_p^2 \ ;\  p^{\nu} \mapsto \left(z, 
\left[
\begin{array}{cc}
  p^{2\nu}&z      \\
  0&1  
\end{array}
\right]\right).$$

\begin{lem}\label{lemp1} Let $w _1, w _2 \in \mathbb{Q}_p \hookrightarrow \overline{\mathbb{Q}}_p = \mathbb{C}$. Suppose $w_1/w_2 \in \mathfrak{h}$, so that $w_1\overline{w_2}-\overline{w_1}w_2 \neq0$. Let $\alpha = v_p(w_2), \beta = v_p(\overline{w_2})$ be the additive $p$-adic valuations. The following hold true in $\mathfrak{h}_p^2$. 
\begin{enumerate}[{\rm (i)}]
\item $\left[
\begin{array}{cc}
  w_1&p^{\nu} \overline{w_1}      \\
  w_2&p^{\nu} \overline{w_2}  
\end{array}
\right] = 
\left[
\begin{array}{cc}
  w_1/w_2&p^{\nu -\alpha +\beta} \overline{w_1/w_2}      \\
  1				&p^{\nu -\alpha +\beta}  
\end{array}
\right]$~~ for $\nu \in \mathbb{Z}$.
\item If $\nu -\alpha +\beta \geq 0$, then

$\left[
\begin{array}{cc}
  w_1&p^{\nu} \overline{w_1}      \\
  w_2&p^{\nu} \overline{w_2}  
\end{array}
\right] = 
\left[
\begin{array}{cc}
 p^{\nu -\alpha +\beta +\gamma}&w_1/w_2      \\
 0			&1
\end{array}
\right]$, where $\gamma = v_p(w_1/w_2-\overline{w_1/w_2})$.
\end{enumerate}
\end{lem}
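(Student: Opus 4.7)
The plan is to realize both identities by explicit right-multiplications with elements of $\mathbb{Q}_p^{\times}GL_2(\mathbb{Z}_p)$, which by the definition $\mathfrak{h}_p^2=GL_2(\mathbb{Q}_p)/\mathbb{Q}_p^{\times}GL_2(\mathbb{Z}_p)$ preserves the class. The left-hand side matrix is invertible because the hypothesis $w_1/w_2\in\mathfrak{h}$ gives $w_1\overline{w_2}-\overline{w_1}w_2\neq 0$, so everything sits in $GL_2(\mathbb{Q}_p)$.

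For part (i), I would right-multiply by the diagonal matrix $R:=\mathrm{diag}(w_2^{-1},\,p^{-\alpha+\beta}/\overline{w_2})$. Writing $w_2=p^\alpha u$ and $\overline{w_2}=p^\beta v$ with $u,v\in\mathbb{Z}_p^\times$, one sees immediately that $R=p^{-\alpha}\mathrm{diag}(u^{-1},v^{-1})\in\mathbb{Q}_p^{\times}GL_2(\mathbb{Z}_p)$, so the operation is admissible. A direct computation of the matrix product, using $\overline{w_1/w_2}=\overline{w_1}/\overline{w_2}$, then produces the right-hand side of (i) on the nose.

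For part (ii), I would first use (i) to reduce to checking the identity for the simpler matrix $\begin{pmatrix}\zeta & p^m\overline{\zeta}\\ 1 & p^m\end{pmatrix}$, where $\zeta:=w_1/w_2$ and $m:=\nu-\alpha+\beta\geq 0$. Next, right-multiply by $R':=\begin{pmatrix}-p^m & 1\\ 1 & 0\end{pmatrix}$. Since $m\geq 0$, $R'$ has entries in $\mathbb{Z}_p$ and determinant $-1$, hence lies in $GL_2(\mathbb{Z}_p)$; the product simplifies to $\begin{pmatrix}p^m(\overline{\zeta}-\zeta) & \zeta\\ 0 & 1\end{pmatrix}$. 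Now writing $\overline{\zeta}-\zeta=p^\gamma u'$ with $u'\in\mathbb{Z}_p^{\times}$ (valid because $\zeta,\overline{\zeta}\in\mathbb{Q}_p$ and $\gamma=v_p(\zeta-\overline{\zeta})=v_p(\overline{\zeta}-\zeta)$), a final right-multiplication by $\mathrm{diag}((u')^{-1},1)\in GL_2(\mathbb{Z}_p)$ converts the top-left entry to $p^{m+\gamma}$ while leaving the top-right entry $\zeta$ and the bottom row untouched, yielding the asserted normal form.

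There is no real conceptual obstacle; the proof is entirely a bookkeeping exercise in $p$-adic valuations, done precisely so that each right-multiplier belongs to $\mathbb{Q}_p^{\times}GL_2(\mathbb{Z}_p)$. The only points that truly use the hypotheses are: the condition $m=\nu-\alpha+\beta\geq 0$ in (ii), which is exactly what makes the entry $-p^m$ of $R'$ integral, and the condition $w_1/w_2\in\mathfrak{h}$, which guarantees $\overline{\zeta}\neq\zeta$ so that $\gamma$ is a well-defined integer.
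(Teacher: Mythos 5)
Your proof is correct and follows essentially the same route as the paper: part (i) is the paper's ``divide by $w_2$'' step (your diagonal multiplier $p^{-\alpha}\mathrm{diag}(u^{-1},v^{-1})$ just packages the scalar division and the unit adjustment into one admissible right-multiplication), and part (ii) is exactly the paper's column operations --- subtract $p^{m}$ times the first column from the second and swap --- written as the single matrix $R'$, followed by the same unit normalization of the top-left entry. All the right-multipliers you use are verified to lie in $\mathbb{Q}_p^{\times}GL_2(\mathbb{Z}_p)$, which is the only point requiring care.
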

\begin{proof}
(i) Dividing the whole matrix on the left hand side by $w_2$ yields the right hand side. 

(ii) Subtracting the $p^{\nu -\alpha +\beta}$ multiple of the first column from the second column on the right hand side matrix in (i), and switching  the two columns yield the right hand side of (ii). 
\end{proof}

\begin{cor}\label{auxgeod}
We have $\varpi(p^{\nu})= \varpi_S(p^{\nu})$ for all $2\nu \geq e_0$. 
\end{cor}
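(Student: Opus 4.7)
The plan is to observe that the archimedean parts of $\varpi(p^\nu)$ and $\varpi_S(p^\nu)$ agree trivially (both equal $z$), so the entire content of the corollary reduces to checking the equality of the $p$-parts in $\mathfrak{h}_p^2$. For this I would apply Lemma \ref{lemp1} directly to $w_1 = z$ and $w_2 = 1$.

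With this choice the two valuations from Lemma \ref{lemp1} become $\alpha = v_p(w_2) = 0$ and $\beta = v_p(\overline{w_2}) = 0$, while the invariant from part (ii) becomes $\gamma = v_p(w_1/w_2 - \overline{w_1/w_2}) = v_p(z - \overline{z}) = e_0$. Part (i) of the lemma (which is essentially the statement that scaling a column by $w_2 = 1$ changes nothing) gives
$$\left[\begin{array}{cc} z & p^{2\nu-e_0}\overline{z} \\ 1 & p^{2\nu-e_0} \end{array}\right] = \left[\begin{array}{cc} z & p^{2\nu-e_0+(\beta-\alpha)}\overline{z} \\ 1 & p^{2\nu-e_0+(\beta-\alpha)} \end{array}\right],$$
which holds vacuously here; the substantive step is part (ii), which requires the nonnegativity assumption $(2\nu - e_0) - \alpha + \beta \geq 0$. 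This is exactly the hypothesis $2\nu \geq e_0$ of the corollary. Under this condition part (ii) yields
$$\left[\begin{array}{cc} z & p^{2\nu-e_0}\overline{z} \\ 1 & p^{2\nu-e_0} \end{array}\right] = \left[\begin{array}{cc} p^{(2\nu-e_0)+\gamma} & z \\ 0 & 1 \end{array}\right] = \left[\begin{array}{cc} p^{2\nu} & z \\ 0 & 1 \end{array}\right],$$
which is precisely the $p$-part of $\varpi(p^\nu)$.

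There is no real obstacle here, since the statement is essentially a bookkeeping exercise matching the two normal forms of an element of $\mathfrak{h}_p^2$ provided by Lemma \ref{lemp1}. The only point that needs any attention is verifying that the index $e_p = (0, -e_0)$ fixed in the definition of $\varpi_S$ is compatible with the valuations $\alpha, \beta$ produced by the choice $w = {}^t(z,1)$, so that applying part (ii) gives exactly the exponent $p^{2\nu}$ (and not, for instance, a shifted power) — but this is immediate from $\alpha = \beta = 0$ and $\gamma = e_0$.
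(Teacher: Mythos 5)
Your proof is correct and is essentially the paper's own argument: the paper simply cites Lemma \ref{lemp1}(ii), and your computation (with $w_1=z$, $w_2=1$, so $\alpha=\beta=0$ and $\gamma=e_0$, whence $p^{(2\nu-e_0)+\gamma}=p^{2\nu}$ under the hypothesis $2\nu-e_0\geq 0$) is exactly the verification that citation leaves implicit.
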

\begin{proof} This is clear by Lemma \ref{lemp1} (ii). \end{proof}

\begin{proof}[Proof of Theorem \ref{p-lag}]

Let $z \in \mathfrak{h} \cap \mathbb{Q}_p \subset \mathbb{C} = \overline{\mathbb{Q}}_p$ be an imaginary quadratic irrational. Let $z=[\  \delta _1 ;\  a_1 ;\  b_{10}, \dots , b_{1l_1}  ;\   \delta _2 ; \ a_2 ;\   b_{20}, \dots , b_{2l_2}  ;  \dots ]$, $\varpi_S$, and $\varpi$ be as above. 
We may assume without loss of generality that $z \in \mathbb{Z}_p$ since the $\{\infty,p\}$-continued fraction expansions of $z$ and $-1/z$ differ only in $\delta_1$. 

Then, by Corollary \ref{auxgeod} and Theorem \ref{perHeeg}, the auxiliary $p$-geodesic $\varpi$ is periodic for $2\nu \geq e_0$, i.e. $\varpi$ satisfies the condition in Theorem \ref{p-gcfper}. Therefore, by Theorem \ref{p-gcfper} and Proposition \ref{claim2}, there exist $k,l \in \mathbb{N}, k<l$, and $\mu \in \mathbb{N}_{>0}$ such that $B_k^{-1} \varpi(p^{\nu}) = B_l^{-1}\varpi(p^{{\nu}+\mu})$ for $2\nu \geq e_0$. The archimedean part of this identity yields $x_{k+1}=x_{l+1}$.
This proves the theorem.
\end{proof}

Next we consider the period group $\Gamma_{\varpi_S}$.

\begin{prop}\label{minper} Let $z \in \mathfrak{h} \cap \mathbb{Z}_p \subset \mathbb{C} = \overline{\mathbb{Q}}_p$ be an imaginary quadratic irrational. We keep the notations above. Let $A \in \Gamma_{\varpi_S}$, that is, there exists $N\in \mathbb{Z}, N \neq 0$, such that $A\varpi_S (p^{\nu}) = \varpi_S (p^{\nu + N})$ for all $\nu \in \mathbb{Z}$. Suppose $N>0$. Then $x_{k+1}= x_{k+N+1}$ for $2k \geq e_0,~ k \geq 1$. 
Furthermore, $B_{k+N}^{-1}AB_k \in (\Gamma_{B_{k}^{-1}\varpi_S})_{{\rm tor}}$.
\end{prop}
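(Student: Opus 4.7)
The plan is to transport the periodicity of $\varpi_S$ to the auxiliary $p$-geodesic $\varpi$ via Corollary \ref{auxgeod}, then apply Proposition \ref{claim1}(i) together with the strict fundamental domain property of $\mathcal{F}_0$.

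By Corollary \ref{auxgeod}, $\varpi_S(p^{\nu})=\varpi(p^{\nu})$ whenever $2\nu\geq e_0$; since $N>0$ the same holds at $\nu+N$, so the hypothesis $A\varpi_S(p^{\nu})=\varpi_S(p^{\nu+N})$ yields $A\varpi(p^{\nu})=\varpi(p^{\nu+N})$ for all such $\nu$. Fixing $k\geq 1$ with $2k\geq e_0$ and setting $\gamma:=B_{k+N}^{-1}AB_k$, one obtains
\[
\gamma\cdot\bigl(B_k^{-1}\varpi(p^k)\bigr)\;=\;B_{k+N}^{-1}A\varpi(p^k)\;=\;B_{k+N}^{-1}\varpi(p^{k+N}).
\]
By Proposition \ref{claim1}(i), both sides lie in $\mathcal{F}_0$, so the archimedean parts $x_{k+1}, x_{k+N+1}$ belong to $\mathcal{D}_0$ and both $p$-components equal $[I_2]$.

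The equality of $p$-components, $\gamma\cdot[I_2]=[I_2]$, forces $\gamma$ into the stabilizer of $[I_2]$ in $SL_2(\mathbb{Z}[1/p])$, which equals $SL_2(\mathbb{Z})$. Then $\gamma\in SL_2(\mathbb{Z})$ sends $x_{k+1}\in\mathcal{D}_0$ to $x_{k+N+1}\in\mathcal{D}_0$ by the fractional linear action on $\mathfrak{h}$. Because $\mathcal{D}_0$, with the boundary conventions fixed in Proposition \ref{p-FD}, is a strict fundamental domain for $PSL_2(\mathbb{Z})$ (each orbit meets $\mathcal{D}_0$ in a single point), this forces $x_{k+1}=x_{k+N+1}$, establishing assertion (i), and places $\gamma$ in the finite elliptic stabilizer $\mathrm{Stab}_{SL_2(\mathbb{Z})}(x_{k+1})$; in particular $\gamma$ is torsion.

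To finish, I identify $\gamma$ as an element of $\Gamma_{B_k^{-1}\varpi_S}$. Writing $B_k^{-1}=\bigl(\begin{smallmatrix}a&b\\c&d\end{smallmatrix}\bigr)$, we have $B_k^{-1}w=(cz+d)\cdot{}^t(x_{k+1},1)$. Since $\gamma$ fixes $x_{k+1}$ as a fractional linear transformation, $\gamma\cdot{}^t(x_{k+1},1)=\lambda\cdot{}^t(x_{k+1},1)$ for some $\lambda\in F$; multiplying by $cz+d$ yields $\gamma\cdot B_k^{-1}w=\lambda\cdot B_k^{-1}w$. Hence $\gamma\in\mathcal{A}_{B_k^{-1}w,S}$, and since $\det\gamma=1$ we get $\gamma\in\Gamma_{B_k^{-1}w,S}$. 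Using $B_k^{-1}\varpi_S=\varpi_{B_k^{-1}w}$, Corollary \ref{corunitperiod} identifies this with $\Gamma_{B_k^{-1}\varpi_S}$, completing (ii). The main technical point is verifying the strictness of $\mathcal{D}_0$ as a fundamental domain — were it only weakly one, $\gamma$ could send $x_{k+1}$ to a distinct equivalent point of $\mathcal{D}_0$ and the archimedean equality could fail — but this is ensured by the explicit boundary inclusions in the definition of $\mathcal{D}_0$.
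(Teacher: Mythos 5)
Your proposal is correct and follows essentially the same route as the paper: transfer periodicity to the auxiliary $p$-geodesic via Corollary \ref{auxgeod}, use Proposition \ref{claim1}(i) to place both points in $\mathcal{F}_0$, deduce $\gamma\in SL_2(\mathbb{Z})$ from the $p$-component and $x_{k+1}=x_{k+N+1}$ from the strictness of $\mathcal{D}_0$, and conclude torsion plus membership in $\Gamma_{B_k^{-1}\varpi_S}$. Your explicit computation showing $\gamma B_k^{-1}w=\lambda B_k^{-1}w$ merely spells out the step the paper compresses into its citation of Corollary \ref{corunitperiod}.
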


\begin{proof}
For $2k \geq e _0$ and $k\geq 1$, we have $\varpi(p^k)=\varpi_S(p^k)$ by Corollary \ref{auxgeod}, and $$B_k^{-1} \varpi (p^k) , B_{k+N}^{-1} \varpi (p^{k+N}) \in \mathcal{D} \times 
\left\{\left[
\begin{array}{cc}
  1&    \\
  &1
\end{array}
\right]\right\}$$
by Proposition \ref{claim1}. On the other hand, $B_{k+N}^{-1} \varpi_S (p^{k+N}) = B_{k+N}^{-1}AB_{k} B_k^{-1} \varpi_S (p^k)$. Hence $C=B_{k+N}^{-1}AB_{k} \in SL_2(\mathbb{Z})$, and both $x_{k+1}$ and $x_{k+N+1}= Cx_{k+1}$ belong to $\mathcal{D}$. Therefore, $x_{k+1}$ and $x_{k+N+1}$ must be the same, and $C$ is a torsion element. Moreover, we see 
$C \in \Gamma_{B_{k}^{-1}w,S}= \Gamma_{B_k^{-1}\varpi_S},$
by Corollary \ref{corunitperiod} applied to the Heegner object $B_k^{-1}\varpi_S$ associated to the basis $B_{k}^{-1}w$ of $F$ over $\mathbb{Q}$, where $w={}^t\!(z,1)$. This completes the proof.
\end{proof}

\begin{cor}\label{pureper}
If $z \in \mathfrak{h} \cap \mathbb{Q}_p \subset \mathbb{C} = \overline{\mathbb{Q}}_p$ is an imaginary quadratic irrational such that $z = x_1 \in \mathcal{D} 
$ and $z_1-\overline{z_1} \in \mathbb{Z}_p^{\times}$, then its $\{\infty, p \}$-continued fraction is purely periodic. That is, there exists $N \in \mathbb{N}_{>0}$ such that $z=x_1=x_{N+1}$. Here $z_1$ is the element taken in the initialization step in the first loop of Algorithm \ref{p-cf}. 
\end{cor}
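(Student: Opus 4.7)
The plan is to adapt the argument of Proposition \ref{minper} so that it applies at the boundary index $k=0$. I will work under $z\in\mathbb{Z}_p$, which is implicit in the corollary's setup: then $z_1=z$, $\delta_1=+1$, and the hypothesis $z_1-\overline{z_1}\in\mathbb{Z}_p^{\times}$ becomes $e_0:=v_p(z-\overline{z})=0$.

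With the notations from the proof of Theorem \ref{p-lag}, form the Heegner object $\varpi_S$ associated to $w={}^t\!(z,1)$ with index $e_p=(0,0)$, and the auxiliary $p$-geodesic $\varpi(p^{\nu})=(z,[p^{2\nu},z;0,1])$. Because $e_0=0$, Corollary \ref{auxgeod} gives $\varpi(p^{\nu})=\varpi_S(p^{\nu})$ for every $\nu\geq 0$ (including the crucial endpoint $\nu=0$), while Theorem \ref{perHeeg} furnishes $A\in\Gamma_{\varpi_S}$ and $N\in\mathbb{N}_{>0}$ with $A\varpi_S(p^{\nu})=\varpi_S(p^{\nu+N})$ for all $\nu$, hence $A\varpi(p^{\nu})=\varpi(p^{\nu+N})$ for $\nu\geq 0$. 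Moreover, using $z\in\mathcal{D}$ together with $z\in\mathbb{Z}_p$,
$$\varpi(1)\;=\;\left(z,\left[\begin{array}{cc}1&z\\0&1\end{array}\right]\right)\;=\;\left(z,\left[\begin{array}{cc}1&0\\0&1\end{array}\right]\right)\;\in\;\mathcal{D}\times\left\{\left[\begin{array}{cc}1&0\\0&1\end{array}\right]\right\}\;=\;\mathcal{F}_0,$$
so in Algorithm \ref{p-gcfalgo} we may take $B_0=I$; Proposition \ref{claim2} then identifies the sequence $\{B_k\}$ produced by Algorithm \ref{p-cf} with that of Algorithm \ref{p-gcfalgo}.

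Setting $C:=B_N^{-1}AB_0=B_N^{-1}A\in SL_2(\mathbb{Z}[1/p])$, the identity $A\varpi(1)=\varpi(p^N)$ rewrites as $C\cdot B_0^{-1}\varpi(1)=B_N^{-1}\varpi(p^N)$. Both sides lie in $\mathcal{F}_0$: the right one by Proposition \ref{claim1}(i) with $k=N\geq 1$, and the left one by the calculation just above. Projecting to the $p$-adic factor, $C$ stabilises the class $[1,0;0,1]\in\mathfrak{h}_p^2$; the stabiliser of this class in $SL_2(\mathbb{Z}[1/p])$ is exactly $SL_2(\mathbb{Z})$ (any $g\in SL_2(\mathbb{Z}[1/p])\cap\mathbb{Q}_p^{\times}GL_2(\mathbb{Z}_p)$ forces the scalar in $\mathbb{Q}_p^{\times}$ to be a $p$-adic unit, hence $g\in GL_2(\mathbb{Z}_p)\cap M_2(\mathbb{Z}[1/p])=M_2(\mathbb{Z})$), so $C\in SL_2(\mathbb{Z})$. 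Projecting to the archimedean factor, $C$ acts on $\mathfrak{h}$ as a M\"obius transformation sending $x_1=z\in\mathcal{D}$ to $x_{N+1}=B_N^{-1}z\in\mathcal{D}$; since $\mathcal{D}$ is a system of representatives for $SL_2(\mathbb{Z})\backslash\mathfrak{h}$, we conclude $x_1=x_{N+1}$, i.e.\ the expansion is purely periodic with period dividing $N$.

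The main technical point --- and the only step genuinely beyond the proof of Proposition \ref{minper} --- is the extension to the boundary index $k=0$, which is exactly where the two hypotheses of the corollary enter: the condition $z_1-\overline{z_1}\in\mathbb{Z}_p^{\times}$ (i.e.\ $e_0=0$) is what allows $\varpi(1)=\varpi_S(1)$ via Corollary \ref{auxgeod}, while $z\in\mathcal{D}$ is what places $\varpi(1)$ in the component $\mathcal{F}_0$ of the fundamental domain and makes the choice $B_0=I$ admissible. With these two ingredients secured, the remainder is a line-for-line transcription of the proof of Proposition \ref{minper}.
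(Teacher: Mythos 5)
Your argument is correct in the case $z\in\mathbb{Z}_p$, but it takes a different route from the paper. The paper's own proof is a one-line reduction: it sets $z'=p^2z_1$, observes that the $(k+1)$-th loop of the expansion of $z'$ coincides with the $k$-th loop of the expansion of $z$, and then invokes Proposition \ref{minper} for $z'$ (whose constant is $e_0'=v_p(z'-\overline{z'})=2$, so the conclusion $x'_{k+1}=x'_{k+N+1}$ already holds at $k=1$, i.e.\ $x_1=x_{N+1}$). You instead re-run the proof of Proposition \ref{minper} directly at the boundary index $k=0$, checking by hand the two ingredients that Proposition \ref{claim1} only supplies for $k\geq 1$: that $\varpi(1)=\varpi_S(1)$ (this is where $e_0=0$, i.e.\ $z_1-\overline{z_1}\in\mathbb{Z}_p^{\times}$, enters via Corollary \ref{auxgeod}) and that $\varpi(1)\in\mathcal{F}_0$ with $B_0=I$ (this is where $z\in\mathcal{D}$ and $z\in\mathbb{Z}_p$ enter). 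Your verification of the stabiliser of $[I]\in\mathfrak{h}_p^2$ and the use of $\mathcal{D}$ as an exact set of representatives are both correct, and your identification of exactly where each hypothesis is used is a genuine gain in transparency over the paper's shift trick, whose claim that the loops of $z'$ and $z$ align itself silently uses $z_1\in\mathcal{D}$.

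One caveat: your opening assertion that $z\in\mathbb{Z}_p$ is ``implicit in the corollary's setup'' is not what the statement says. The corollary explicitly refers to ``the element $z_1$ taken in the initialization step,'' which allows $z\notin\mathbb{Z}_p$ and $z_1=-1/z$ with $\delta_1=-1$; membership $z\in\mathcal{D}$ is an archimedean condition and does not force $z\in\mathbb{Z}_p$. So as written you prove only the case $\delta_1=+1$. This is the only case needed for Theorem \ref{p-lag2} (where $z=\theta\in\mathbb{Z}_p$), and the remaining case would require a separate (short) argument --- e.g.\ tracking the extra factor $D_1=\left(\begin{smallmatrix}0&-1\\1&0\end{smallmatrix}\right)$ through your computation of $\varpi(1)$ --- but you should either supply it or state explicitly that you are proving the corollary under the additional hypothesis $z\in\mathbb{Z}_p$.
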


\begin{proof} 
Let $z'=p^2z_1$. Then $(k+1)$-th loop of $\{\infty , p\}$-continued fraction expansion of $z'$ coincides with $k$-th loop of $\{\infty , p\}$-continued fraction expansion of $z$. Proposition \ref{minper} applied to $z'$ finishes the proof.
\end{proof}

\subsection{$p$-units, the Pell-like equations and ideal classes}\label{sectp-app}
 In this section, we discuss $p$-units, the Pell-like equations and ideal classes of imaginary quadratic fields in terms of the $\{\infty, p\}$-continued fractions and its periods.
  
 Let $F \subset \mathbb{C}$ be an imaginary quadratic field and $p$ a prime number which splits completely in $F/ \mathbb{Q}$. We fix an isomorphism $\mathbb{C} \simeq \overline{\mathbb{Q}}_p$ of fields. Let $v$ be a place of $F$ which corresponds to the embedding $F \subset \mathbb{C} = \overline{\mathbb{Q}}_p$ and $\bar{v}$ its conjugate over $\mathbb{Q}$. We denote by $\mathcal{O}_F$ the ring of integers of $F$ and by $\mathfrak{p}$ (resp. $\bar{\mathfrak{p}}$) the prime ideal above $p$ which corresponds to the place $v$ (resp. $\bar{v}$). We have $(p) = \mathfrak{p} \bar{\mathfrak{p}}$. More precisely, let $d>0$ be a square free integer such that $F=\mathbb{Q}(\sqrt{-d})$. Let $\theta = \dfrac{-1+\sqrt{-d}}{2}$ if $-d \equiv 1\mod 4$, and $\theta = \sqrt{-d}$ otherwise, so that $\mathcal{O}_F = \mathbb{Z}[\theta]$. We see $\theta \in \mathbb{Z}_p$ by the assumption that $p$ splits completely in $F/\mathbb{Q}$.
 
 Combining the previous results, we get the following.
\begin{thm}\label{p-lag2} Consider the $\{\infty , p\}$-continued fraction expansion $$\theta = [ \delta _1 ;\  a_1 ;\  b_{10}, \dots , b_{1l_1}  ;\ \delta _2 ; \ a_2 ;\ b_{20}, \dots , b_{2l_2}  ;  \dots ].$$
We also take $x_k, z_k, A_k, B_k$, etc. as in Algorithm \ref{p-cf}.  
The following hold true.
\begin{enumerate}[{\rm (i)}]
\item The $\{\infty , p\}$-continued fraction expansion of $\theta$ is purely periodic.
\item Let $N$ be the smallest  positive integer such that $\theta = x_1=x_{N+1}$. Set $
\left(
\begin{array}{cc}
  q_N&r _N    \\
  s_N&t_N
\end{array}
\right):= B_N^{-1}\in SL_2(\mathbb{Z}[1/p])$. Then $\epsilon =s_N \theta + t_N \in F$ is a fundamental norm one $p$-unit of F, i.e. a generator of the rank one free abelian group $\mathcal{O}_F[1/p]^1/({\rm tor})$, where $\mathcal{O}_F[1/p]^1 =\{x \in \mathcal{O}_F[1/p]^{\times} \ |\  N_{F/\mathbb{Q}}(x)=1\}$.
\item In {\rm (ii)}, $s:=p^Ns_N , t:= p^Nt_N$ are relatively prime integers. Set $u:= p^N \epsilon =s\theta +t \in \mathcal{O}_F$. Then $u \mathcal{O}_F = \mathfrak{p}^{2N}$ as ideals. Furthermore, if there exists $x$ in $\mathcal{O}_F$ such that $u/x^2$ is a root of unity, then ${\rm ord} [\mathfrak{p}] = N$, and otherwise ${\rm ord} [\mathfrak{p}] = 2N$. Here ${\rm ord} [\mathfrak{p}]$ is the order of the ideal class $[\mathfrak{p}]$ in the ideal class group of $\mathcal{O}_F$.
\end{enumerate}
\end{thm}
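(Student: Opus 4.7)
My plan has three parts, one per assertion, with the main technical work in (ii).

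For (i), the plan is to apply Corollary \ref{pureper} to $z = \theta$, so it suffices to verify the two hypotheses of that corollary. First, $\theta \in \mathcal{D}$: when $-d \not\equiv 1 \pmod 4$, $\theta = \sqrt{-d}$ has $\Re \theta = 0$ and $|\theta| = \sqrt{d} \geq 1$, lying either in the interior of $\mathcal{D}$ (for $d \geq 2$) or on the prescribed arc at $i = e^{i\pi/2}$ (for $d = 1$); when $-d \equiv 1 \pmod 4$, $\theta = (-1+\sqrt{-d})/2$ has $\Re \theta = -1/2$ and $|\theta|^2 = (d+1)/4 \geq 1$, lying in the interior (for $d \geq 7$) or at $e^{2\pi i/3}$ (for $d = 3$). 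Second, $z_1 - \bar z_1 = \theta - \bar\theta$ equals $\sqrt{-d}$ or $2\sqrt{-d}$, and in either case lies in $\mathbb{Z}_p^{\times}$: since $p$ splits completely in $F/\mathbb{Q}$, it is unramified, so $p \nmid \mathrm{disc}(F)$ forces $p \nmid d$, and when the discriminant is $-4d$ one also has $p \neq 2$. Corollary \ref{pureper} then yields pure periodicity.

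For (ii), the plan is to identify $B_N^{-1}$ as an element of the period group $\Gamma_{\varpi_S}$ whose image under $\varphi$ is $\epsilon$, and then to show $\epsilon$ is fundamental via the minimality of $N$. Pure periodicity gives $x_{N+1} = x_1 = \theta$; since $x_{N+1} = B_N^{-1}\theta$ under the linear-fractional action, rearranging yields
\[
B_N^{-1}\begin{pmatrix} \theta \\ 1 \end{pmatrix} \;=\; (s_N\theta + t_N) \begin{pmatrix} \theta \\ 1 \end{pmatrix} \;=\; \epsilon \begin{pmatrix} \theta \\ 1 \end{pmatrix}.
\]
Hence $B_N^{-1} \in \Gamma_{w,S}$ with $\varphi(B_N^{-1}) = \epsilon$ for $w = {}^t\!(\theta, 1)$. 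Since $\mathfrak{a} = \mathbb{Z}[1/p]\theta + \mathbb{Z}[1/p] = \mathcal{O}_F[1/p]$ is already a ring, Proposition \ref{orderunits}(i) identifies $\mathcal{O}_{w,S} = \mathcal{O}_F[1/p]$, so $\epsilon \in \mathcal{O}_F[1/p]^1$. I would then pin down $v_{\mathfrak{p}}(\epsilon) = N$ by comparing two descriptions of $B_N^{-1}\varpi_S(p^{\nu})$: Propositions \ref{claim1}(ii) and \ref{claim2} combined with Corollary \ref{auxgeod} give $B_N^{-1}\varpi(p^{\nu}) = \varpi(p^{\nu-N})$ on the tail $2(\nu-N) \geq e_0$ (using $z_{N+1} = z_1 = \theta$ so that $D_{N+1} = I$), while Proposition \ref{unitisperiod} gives $B_N^{-1}\varpi_S(p^{\nu}) = \varpi_S(p^{\nu - v_{\mathfrak{p}}(\epsilon)})$, and injectivity of $\varpi_S$ (Proposition \ref{heeginj}) forces $N = v_{\mathfrak{p}}(\epsilon)$. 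Finally, to show fundamentality, I argue by contradiction: if some $\epsilon_0 \in \mathcal{O}_F[1/p]^1$ had $0 < v_{\mathfrak{p}}(\epsilon_0) = N_0 < N$, Proposition \ref{minper} applied to the corresponding element of $\Gamma_{\varpi_S}$ would yield $x_{k+1} = x_{k+N_0+1}$ for all sufficiently large $k$, which is incompatible with the minimality of $N$ as the period of the purely periodic sequence $(x_k)$.

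For (iii), the key input is that $\epsilon \in \mathcal{O}_F[1/p]^1$ with $v_{\mathfrak{p}}(\epsilon) = N$ and $N_{F/\mathbb{Q}}(\epsilon) = 1$, whence $v_{\bar{\mathfrak{p}}}(\epsilon) = -N$; consequently $u = p^N\epsilon$ satisfies $v_{\mathfrak{p}}(u) = 2N$ and $v_{\bar{\mathfrak{p}}}(u) = 0$, proving $u \in \mathcal{O}_F$ and $u\mathcal{O}_F = \mathfrak{p}^{2N}$. Integrality of $s$ and $t$ follows from $u \in \mathbb{Z} + \mathbb{Z}\theta = \mathcal{O}_F$; their coprimality follows because any common prime divisor $c$ would yield $c\mathcal{O}_F \mid \mathfrak{p}^{2N}$, but $c\mathcal{O}_F$ is Galois-stable and hence a power of $(p) = \mathfrak{p}\bar{\mathfrak{p}}$, which is incompatible with dividing a power of $\mathfrak{p}$ alone unless $c = \pm 1$. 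For the ideal class, $u\mathcal{O}_F = \mathfrak{p}^{2N}$ directly gives $\mathrm{ord}[\mathfrak{p}] \mid 2N$; conversely, if $\mathfrak{p}^M = (\alpha)$ is the minimal principal power, then $\alpha/\bar\alpha \in \mathcal{O}_F[1/p]^1$ has $v_{\mathfrak{p}}(\alpha/\bar\alpha) = M$, so fundamentality of $\epsilon$ yields $N \mid M$; combined, $M \in \{N, 2N\}$. Finally, $M = N$ holds iff $\mathfrak{p}^N = (x)$ for some $x \in \mathcal{O}_F$, iff $x^2\mathcal{O}_F = u\mathcal{O}_F$, iff $u/x^2 \in \mathcal{O}_F^{\times}$, which in the imaginary quadratic setting is precisely the set of roots of unity.

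The main obstacle, as I anticipate it, is the careful identification $v_{\mathfrak{p}}(\epsilon) = N$ in part (ii): the Heegner object $\varpi_S$ and the auxiliary $p$-geodesic $\varpi$ use different but linked parametrizations (Corollary \ref{auxgeod}), and matching the geometric shift $N$ extracted from the continued fraction algorithm with the $p$-adic valuation shift from Proposition \ref{unitisperiod} demands careful bookkeeping of indices and of the step-functional identities in Proposition \ref{claim1}. Everything else is a clean assembly of the preceding results with standard ideal arithmetic.
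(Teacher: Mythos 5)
Your proposal is correct and follows essentially the same architecture as the paper's proof: Corollary \ref{pureper} for (i), identification of $B_N^{-1}$ as an element of $\Gamma_{w,S}$ with $\mathcal{O}_{w,S}=\mathcal{O}_F[1/p]$ plus Proposition \ref{minper} for fundamentality in (ii), and the same ideal arithmetic in (iii). The one place you genuinely diverge is the key identity $v_{\mathfrak{p}}(\epsilon)=N$: the paper isolates this as a Claim and proves it by comparing $B_N^{-1}$ with an abstract generator $A$ of $\Gamma_{\varpi_S}/({\rm tor})$ modulo torsion and invoking $\psi(A^{-1})=p^{-N}$, whereas you read off the shift of $B_N^{-1}$ on $\varpi_S$ directly from Proposition \ref{claim1}(ii) together with Corollary \ref{auxgeod} and then compare with Proposition \ref{unitisperiod} via injectivity of $\varpi_S$; your version is self-contained and arguably more direct, and it lets you run the fundamentality argument by contradiction against the minimality of $N$ rather than the paper's forward construction of a generator. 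The only expository gap is in that contradiction step: from $x_i=x_{i+N_0}$ for $i\geq 2$ you still need to propagate back through $x_1=x_{1+N}=x_{1+N-N_0}$ using pure periodicity to actually contradict the minimality of $N$, exactly as the paper does; this is routine but should be said.
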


\begin{proof}
(i) It is obvious that $\theta \in \mathcal{D}$. Since $p$ is unramified in $F$, $p \nmid d$ if $-d \equiv 1 \mod 4$ and $p \nmid 2d$ otherwise, we have $\theta-\overline{\theta} \in \mathbb{Z}_p^{\times}$. Therefore the conditions in Corollary \ref{pureper} are satisfied.

(ii) Let $w$ be the basis ${}^t\!(\theta,1)$ of $F$ over $\mathbb{Q}$. Let $\varpi_S$ be the $\{\infty,p\}$-Heegner object associated to $w$ with index $e=(0,0)$. First, since $(\theta,1)$ is a basis of $\mathcal{O}_F$ over $\mathbb{Z}$, by Proposition \ref{orderunits} (i), we see $\mathcal{O}_{w,S} = \mathcal{O}_F[1/p]$. 
Now $B_N^{-1}x_1=x_{1+N}=x_1$. Hence, by Corollary \ref{corunitperiod}, $B_N^{-1} \in \Gamma_{w,S}=\Gamma_{\varpi_S}\overset{\varphi}{\simeq} \mathcal{O}_F[1/p]^1$. Moreover, we easily see $\varphi(B_{N}^{-1})=s_N \theta + t_N=\epsilon$. 

It suffices to show $B_N^{-1}$ gives a generator of $\Gamma_{\varpi_S}/({\rm tor})$. Take $A \in \Gamma_{\varpi_S}$ which gives a generator of $\Gamma_{\varpi_S}/({\rm tor})$. Let $N' \in \mathbb{Z}, N' \neq 0$, such that $A \varpi_S(p^{\nu})=\varpi_S(p^{\nu+N'})$ for all $\nu \in \mathbb{Z}$. Suppose $N'>0$.
Then, by Proposition \ref{minper}, we get $x_2=x_{N'+2}$, and $B_{N'+1}^{-1}AB_{1} \in (\Gamma_{B_1^{-1}\varpi_S})_{\rm tor}$. On the other hand, since we have proved that the $\{\infty , p\}$-continued fraction expansion of $\theta$ is purely periodic, we have $x_i=x_{i+N'}$ for all $i\geq 1$. Indeed, since $x_2=x_{N'+2}$, we have  $x_i=x_{i+N'}$ for all $i\geq 2$, and since $x_1=x_{N+1}$, we have  $x_i=x_{i+N}$ for all $i\geq 1$. Thus we get $x_i = x_{i+N}=x_{i+N+N'}=x_{i+N'}$ for all $i \geq 1$. In particular, $x_{N'+1}=x_{1}$, and $A_{N'+1}=A_1=B_1$. 
Therefore, $B_{N'+1}=B_{N'}A_{N'+1}=B_{N'}B_1$, and $C:=B_{N'}^{-1}A \in (B_1\Gamma_{B_1^{-1}\varpi_S}B_1^{-1})_{\rm tor}=(\Gamma_{\varpi_S})_{\rm tor}$.  Hence, $B_{N'}$ also gives a generator of $\Gamma_{\varpi_S}/({\rm tor})$.

Now, by the minimality of $N$, we have $N|N'$ and $B_{N'}=B_N^r$ for some $r \in \mathbb{N}_{>0}$. Since $B_{N'}$ gives a generator of $\Gamma_{\varpi_S}/({\rm tor})$, we get $r=1$, and $B_N=B_{N'}$. This proves (ii). 

(iii) It is clear from the construction that $p^N q_N,p^{N} r_N,p^{N} s_N,p^{N} t_N \in \mathbb{Z}$. 
\begin{claim}
$v_p(\epsilon) =N$.
\end{claim}
\begin{proof}[Proof of Claim] 
Recall the notations $\psi$ and $L: \Gamma_{\varpi_S} \rightarrow T_S^1 \simeq p^{\mathbb{Z}} ; A' \mapsto |\varphi(A')|_p$ in Section \ref{sectper}. By the proof of (ii), $B_N^{-1}A \in \Gamma_{\varpi_S}$ is a torsion element. Therefore, $|\epsilon|_p=L(B_N^{-1})=L(A^{-1})$. On the  other hand, by Proposition \ref{unitisperiod}, $L(A^{-1})= \psi(A^{-1})=p^{-N}$. Thus $v_p(\epsilon)=N$. 
\end{proof}
 Therefore, $v_p(s\theta +t)=p^{2N}$. On the other hand, we have $(s\theta +t) (\overline{s\theta +t}) =N_{F/\mathbb{Q}}(s\theta +t) =p^{2N}$ since $N_{F/\mathbb{Q}}(\epsilon)=1$. Thus, we see $v_p(s \overline{\theta}+t)=0$ and $s,t$ are relatively prime.  It also follows that $u \mathcal{O}_F = \mathfrak{p}^{2N}$.
 
 For the latter half of the statement, let $l= {\rm ord}[\mathfrak{p}]$ and $x \mathcal{O}_F = \mathfrak{p}^l$ for $x \in \mathcal{O}_F$. Then $x^2/p^l$ is a norm one $p$-unit. Since we already know in {\rm (ii)} that $\epsilon = u/p^N$ is a fundamental norm one $p$-unit, there exist $n \in \mathbb{Z}$ and a root of unity $\zeta \in F$ such that $x^2/p^l=\left(u/p^N\right)^n \times \zeta$. In particular, we have $l=v_p(x^2/p^l)=v_p((u/p^N)^n)=nN$. Now, since $u \mathcal{O}_F = \mathfrak{p}^{2N}$, we have $l | 2N$, and since $N,l>0$, we have $n>0$.
 Therefore, there are only two cases: $n=1$ or $n=2$. If $n=1$, then ${\rm ord}[\mathfrak{p}]=N$, and if $n=2$, then ${\rm ord}[\mathfrak{p}]=2N$. This proves the theorem.
\end{proof}

\begin{cor} We keep the notations in Theorem \ref{p-lag2}.
\begin{enumerate}[{\rm (i)}]
\item If $-d \not\equiv 1 \mod 4$, then the solutions $(x,y,\nu)$ to the Pell-like equation $x^2+d y^2=p^{2\nu},$ where $x$ and $y$ are relatively prime integers, and $\nu \in \mathbb{N}$, are exactly of the form $$x+ \sqrt{-d} y = \zeta (t \pm \sqrt{-d} s)^r, \nu=rN,$$ where $\zeta $ is a root of unity in $F$ and $ r \in \mathbb{N}$.
\item If $-d \equiv 1 \mod 4$, then then the solutions $(x,y,\nu)$ to the Pell-like equation $4x^2-4xy+(1+d)y^2=4p^{2\nu},$ where $x$ and $y$ are relatively prime integers, and $\nu \in \mathbb{N}$, are exactly of the form $$x+ \dfrac{-1+ \sqrt{-d}}{2} y = \zeta (t+ \dfrac{-1\pm \sqrt{-d}}{2} s)^r, \nu=rN,$$ where $\zeta $ is a root of unity in $F$ and $ r \in \mathbb{N}$.
\end{enumerate}
\end{cor}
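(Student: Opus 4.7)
The strategy is to rewrite the Pell-like equation as an ideal-theoretic statement in $\mathcal{O}_F$ and then invoke Theorem \ref{p-lag2} to enumerate the generators of the relevant powers of $\mathfrak{p}$.

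First, I would set $\alpha := x + \sqrt{-d}\,y$ in case (i) and $\alpha := x + \theta y$ in case (ii); in either case $\alpha \in \mathcal{O}_F$, and the equation is equivalent to $N_{F/\mathbb{Q}}(\alpha) = p^{2\nu}$. Because $N((\alpha)) = p^{2\nu}$, the ideal $(\alpha)$ is supported only at $\mathfrak{p}$ and $\bar{\mathfrak{p}}$, so $(\alpha) = \mathfrak{p}^a \bar{\mathfrak{p}}^b$ with $a + b = 2\nu$. The condition $\gcd(x,y) = 1$ is equivalent to $\alpha$ not being divisible by any rational prime; given the norm condition, this forces $\min(a, b) = 0$. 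Hence, after possibly replacing $\alpha$ by $\bar\alpha$, I may assume $(\alpha) = \mathfrak{p}^{2\nu}$.

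The central step is to show that $\mathfrak{p}^{2\nu}$ being principal forces $N \mid \nu$. By Theorem \ref{p-lag2}(ii), the fundamental norm one $p$-unit $\epsilon = u/p^N$ satisfies $v_{\mathfrak{p}}(\epsilon) = N$, so every norm one $p$-unit $\eta \in \mathcal{O}_F[1/p]^1$ has $v_\mathfrak{p}(\eta) \in N \mathbb{Z}$. On the other hand, identifying norm one $p$-units with principal fractional ideals of the shape $\mathfrak{p}^a \bar{\mathfrak{p}}^{-a}$ (and using $[\bar{\mathfrak{p}}] = [\mathfrak{p}]^{-1}$), this image in $\mathbb{Z}$ equals $(\mathrm{ord}[\mathfrak{p}]/\gcd(\mathrm{ord}[\mathfrak{p}],2))\mathbb{Z}$. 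Equating yields $N = \mathrm{ord}[\mathfrak{p}]/\gcd(\mathrm{ord}[\mathfrak{p}], 2)$, which combined with Theorem \ref{p-lag2}(iii) ($\mathrm{ord}[\mathfrak{p}] \in \{N, 2N\}$) forces $N$ to be odd whenever $\mathrm{ord}[\mathfrak{p}] = N$. In either case of \ref{p-lag2}(iii), the divisibility $\mathrm{ord}[\mathfrak{p}] \mid 2\nu$ (which is what principality of $\mathfrak{p}^{2\nu}$ means) is therefore equivalent to $N \mid \nu$. I expect this parity analysis to be the main obstacle; everything else is routine ideal bookkeeping.

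Given $\nu = rN$ with $r \in \mathbb{N}$, we obtain $(\alpha) = \mathfrak{p}^{2rN} = (u^r)$ using $(u) = \mathfrak{p}^{2N}$ from Theorem \ref{p-lag2}(iii); hence $\alpha = \zeta u^r$ for some root of unity $\zeta$, since $\mathcal{O}_F^\times$ consists only of roots of unity in the imaginary quadratic setting. Conversely, every $\zeta u^r$ (respectively $\zeta \bar{u}^r$) yields a valid solution with $\gcd(x, y) = 1$, thanks to $\gcd(s, t) = 1$ from Theorem \ref{p-lag2}(iii) together with $v_{\bar{\mathfrak{p}}}(u^r) = 0$. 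Finally, expanding $u = t + s\sqrt{-d}$ in case (i) and $u = t + s\theta$ in case (ii), and combining the two alternatives $(\alpha) = \mathfrak{p}^{2\nu}$ versus $(\alpha) = \bar{\mathfrak{p}}^{2\nu}$, produces the two signs $\pm$ in the stated parametrization.
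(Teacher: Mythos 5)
Your proof is correct and follows essentially the same route as the paper: rewrite the Pell-like equation as $N_{F/\mathbb{Q}}(x+y\theta)=p^{2\nu}$, use coprimality to force $(x+y\theta)=\mathfrak{p}^{2\nu}$ or $\bar{\mathfrak{p}}^{2\nu}$, and then invoke the fundamentality of $\epsilon=u/p^{N}$ from Theorem \ref{p-lag2} to obtain $N\mid\nu$ and the enumeration of generators up to roots of unity. Your detour through $\mathrm{ord}[\mathfrak{p}]$ and the parity of $N$ is a valid, if slightly more roundabout, way to get $N\mid\nu$; the paper reaches the same conclusion directly by observing that $(x+y\theta)/p^{\nu}$ generates $\mathfrak{p}^{\pm\nu}\bar{\mathfrak{p}}^{\mp\nu}$ and is therefore a root of unity times a power of $\epsilon$.
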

\begin{proof}
Let $\theta$ be as above, so that $\mathcal{O}_F=\mathbb{Z}+\mathbb{Z} \theta$. For relatively prime integers $x,y$ and $\nu \in \mathbb{N}$,
\begin{eqnarray*}
N_{F/\mathbb{Q}}(x+y\theta) = p^{2\nu} &\Leftrightarrow& (x+y\theta) = \mathfrak{p}^{2\nu} \text{ or } \bar{\mathfrak{p}}^{2\nu}
\Leftrightarrow \left(\dfrac{x+y\theta}{p^{\nu}} \right) = \mathfrak{p}^{\nu} \bar{\mathfrak{p}}^{-\nu} \text{ or } \mathfrak{p}^{-\nu}\bar{\mathfrak{p}}^{\nu}\\
&\Leftrightarrow& N|\nu \text{ and } \left(\dfrac{x+y\theta}{p^{\nu}}\right)= \left(\dfrac{u}{p^N}\right)^{\nu/N}
\end{eqnarray*}
where, for $\alpha \in \mathcal{O}_F$, $(\alpha)$ denotes the integral ideal generated by $\alpha$. This shows the corollary.
\end{proof}

\clearpage
\section{Examples}\label{sectex}
In this section we present some numerical examples. 
\subsection{Geodesic continued fractions}
Here we treat the case where $S=\{\infty\}$. We use Mathematica for calculation.
\paragraph{Real quadratic field}~

 Let $d>1$ be a positive square free integer. Let $F$ be the real quadratic field $ \mathbb{Q}(\sqrt{d})$. We consider the Heegner object associated to $w ={}^t\!(\sqrt{d}, 1)$, that is, 
 $$\varpi : \mathbb{R}_{>0} \rightarrow \mathfrak{h}\  ;\  t \mapsto 
\left[
\begin{array}{cc}
 -\sqrt{d} & \sqrt{d} t \\
 1 & t
\end{array}
\right] .$$ 
 In the following, we give some examples of the sequence $\{A_k,B_k,[B_k^{-1}w]\}_k$ and $\{s_k , t_k\}_k$ obtained by Algorithm 1 (forward loop) applied to $\varpi(t)$. Here $[v] = v_1/v_2$ for $v={}^t\!(v_1, v_2) \in \mathbb{R}^2$. Note that by Corollary \ref{corunitperiod}, $\varpi_k(t)=\varpi_l(\rho t)$ holds for some $\rho \in \mathbb{R}_{>0}$ if and only if $[B_k^{-1}w] = [B_l^{-1}w]$. We write approximate values for $s_k,t_k$.

\begin{multicols}{2}
{\footnotesize
\noindent\underline{$d=2$}

\noindent\(\left\{
preparation; \left(
\begin{array}{cc}
 1 & 0 \\
 0 & 1
\end{array}
\right),\sqrt{2}\right\}\) :~period start

\noindent\(\{0.69108,1.44701\}\)

\noindent\(\left\{\left(
\begin{array}{cc}
 1 & 1 \\
 0 & 1
\end{array}
\right),\left(
\begin{array}{cc}
 1 & 1 \\
 0 & 1
\end{array}
\right),-1+\sqrt{2}\right\}\)

\noindent\(\{1.44701,2.96306\}\)

\noindent\(\left\{\left(
\begin{array}{cc}
 0 & -1 \\
 1 & 0
\end{array}
\right),\left(
\begin{array}{cc}
 1 & -1 \\
 1 & 0
\end{array}
\right),-1-\sqrt{2}\right\}\)

\noindent\(\{1.96703,4.02791\}\)

\noindent\(\left\{\left(
\begin{array}{cc}
 1 & -1 \\
 0 & 1
\end{array}
\right),\left(
\begin{array}{cc}
 1 & -2 \\
 1 & -1
\end{array}
\right),-\sqrt{2}\right\}\)

\noindent\(\{4.02791,8.43379\}\)

\noindent\(\left\{\left(
\begin{array}{cc}
 1 & -1 \\
 0 & 1
\end{array}
\right),\left(
\begin{array}{cc}
 1 & -3 \\
 1 & -2
\end{array}
\right),1-\sqrt{2}\right\}\)

\noindent\(\{8.43379,17.27\}\)

\noindent\(\left\{\left(
\begin{array}{cc}
 0 & -1 \\
 1 & 0
\end{array}
\right),\left(
\begin{array}{cc}
 -3 & -1 \\
 -2 & -1
\end{array}
\right),1+\sqrt{2}\right\}\)

\noindent\(\{11.4647,23.4764\}\)

\noindent\(\left\{\left(
\begin{array}{cc}
 1 & 1 \\
 0 & 1
\end{array}
\right),\left(
\begin{array}{cc}
 -3 & -4 \\
 -2 & -3
\end{array}
\right),\sqrt{2}\right\}\) :~period start

\noindent\(\{23.4764,49.1557\}\)

\noindent\(\left\{\left(
\begin{array}{cc}
 1 & 1 \\
 0 & 1
\end{array}
\right),\left(
\begin{array}{cc}
 -3 & -7 \\
 -2 & -5
\end{array}
\right),-1+\sqrt{2}\right\}\)

\noindent\(\{49.1557,100.657\}\)

\noindent\(\left\{\left(
\begin{array}{cc}
 0 & -1 \\
 1 & 0
\end{array}
\right),\left(
\begin{array}{cc}
 -7 & 3 \\
 -5 & 2
\end{array}
\right),-1-\sqrt{2}\right\}\)

\noindent\(\{66.8211,136.83\}\)

\noindent\(\left\{\left(
\begin{array}{cc}
 1 & -1 \\
 0 & 1
\end{array}
\right),\left(
\begin{array}{cc}
 -7 & 10 \\
 -5 & 7
\end{array}
\right),-\sqrt{2}\right\}\)

~\\
\noindent
$u=\varphi (
\left(
\begin{array}{cc}
 -3 & -4 \\
 -2 & -3
\end{array}
\right)
) = -2\sqrt{2}-3$ is a (fundamental) norm one unit of $\mathbb{Z}[\sqrt{2}]$.

\noindent\underline{$d=3$}

\noindent\(\left\{
preparation; \left(
\begin{array}{cc}
 1 & 0 \\
 0 & 1
\end{array}
\right),\sqrt{3}\right\}\) :~period start

\noindent\(\{0.742955,1.34598\}\)

\noindent\(\left\{\left(
\begin{array}{cc}
 1 & 1 \\
 0 & 1
\end{array}
\right),\left(
\begin{array}{cc}
 1 & 1 \\
 0 & 1
\end{array}
\right),-1+\sqrt{3}\right\}\)

\noindent\(\{1.34598,3.73205\}\)

\noindent\(\left\{\left(
\begin{array}{cc}
 1 & 1 \\
 0 & 1
\end{array}
\right),\left(
\begin{array}{cc}
 1 & 2 \\
 0 & 1
\end{array}
\right),-2+\sqrt{3}\right\}\)

\noindent\(\{3.73205,4.40807\}\)

\noindent\(\left\{\left(
\begin{array}{cc}
 0 & -1 \\
 1 & 1
\end{array}
\right),\left(
\begin{array}{cc}
 2 & 1 \\
 1 & 1
\end{array}
\right),1+\sqrt{3}\right\}\)

\noindent\(\{3.73205,10.348\}\)

\noindent\(\left\{\left(
\begin{array}{cc}
 1 & 1 \\
 0 & 1
\end{array}
\right),\left(
\begin{array}{cc}
 2 & 3 \\
 1 & 2
\end{array}
\right),\sqrt{3}\right\}\) :~period start

\noindent\(\{10.348,18.747\}\)

\noindent\(\left\{\left(
\begin{array}{cc}
 1 & 1 \\
 0 & 1
\end{array}
\right),\left(
\begin{array}{cc}
 2 & 5 \\
 1 & 3
\end{array}
\right),-1+\sqrt{3}\right\}\)

\noindent\(\{18.747,51.9808\}\)

\noindent\(\left\{\left(
\begin{array}{cc}
 1 & 1 \\
 0 & 1
\end{array}
\right),\left(
\begin{array}{cc}
 2 & 7 \\
 1 & 4
\end{array}
\right),-2+\sqrt{3}\right\}\)

\noindent\(\{51.9808,61.3965\}\)

\noindent\(\left\{\left(
\begin{array}{cc}
 0 & -1 \\
 1 & 1
\end{array}
\right),\left(
\begin{array}{cc}
 7 & 5 \\
 4 & 3
\end{array}
\right),1+\sqrt{3}\right\}\)

\noindent\(\{51.9808,144.129\}\)

\noindent\(\left\{\left(
\begin{array}{cc}
 1 & 1 \\
 0 & 1
\end{array}
\right),\left(
\begin{array}{cc}
 7 & 12 \\
 4 & 7
\end{array}
\right),\sqrt{3}\right\}\) :~period start

\noindent\(\{144.129,261.113\}\)

\noindent\(\left\{\left(
\begin{array}{cc}
 1 & 1 \\
 0 & 1
\end{array}
\right),\left(
\begin{array}{cc}
 7 & 19 \\
 4 & 11
\end{array}
\right),-1+\sqrt{3}\right\}\)

~\\
\noindent
$u=\varphi (
\left(
\begin{array}{cc}
 2 & 3 \\
 1 & 2
\end{array}
\right)
) = \sqrt{3}+2$ is a (fundamental) norm one unit of $\mathbb{Z}[\sqrt{3}]$.

}
\end{multicols}
{\small
Here a fundamental norm one unit of $\mathcal{O}$ means a generator of $\mathcal{O}^{1}/(\text{torsion})$, where $\mathcal{O}^1 \subset \mathcal{O}^{\times}$ is the subgroup of norm one units.
}
\clearpage

\paragraph{Complex cubic field $\mathbb{Q}(d^{1/3})$}~

 Let $d>1$ be a positive cubic free integer. Let $F$ be the complex cubic field $\mathbb{Q}(d^{1/3})$. We consider the Heegner object associated to $w ={}^t\!(d^{2/3}, d^{1/3}, 1)$, that is, 
 $$\varpi : \mathbb{R}_{>0} \rightarrow \mathfrak{h}^3\  ;\  t \mapsto 
\left[
\begin{array}{ccc}
 -\frac{1}{2} \sqrt{3} d^{2/3} & -\frac{d^{2/3}}{2} & d^{2/3} t \\
 \frac{1}{2} \sqrt{3} d^{1/3} & -\frac{d^{1/3}}{2} & d^{1/3} t \\
 0 & 1 & t
\end{array}
\right].$$ 
 In the following, we give some examples of the sequence $\{A_k,B_k,[B_k^{-1}w]\}_k$ and $\{s_k , t_k\}_k$ obtained by Algorithm 1 (forward loop) applied to $\varpi(t)$. Here $[v] = (v_1/v_3,v_2/v_3)$ for $v={}^t\!(v_1, v_2, v_3) \in \mathbb{R}^3$. Note that by Corollary \ref{corunitperiod}, $\varpi_k(t)=\varpi_l(\rho t)$ holds for some $\rho \in \mathbb{R}_{>0}$ if and only if $[B_k^{-1}w] = [B_l^{-1}w]$. We write approximate values for $s_k,t_k$. Set $\theta := d^{1/3}$.

\begin{multicols}{2}
{\tiny
\noindent\underline{$d=2$}

\noindent\(\left\{
preparation; \left(
\begin{array}{ccc}
 1 & 0 & 0 \\
 0 & 1 & 0 \\
 0 & 0 & 1
\end{array}
\right),\theta ^2,\theta \right\}\) :~period start

\noindent\(\{0.410037,1.09074\}\)

\noindent\(\left\{\left(
\begin{array}{ccc}
 1 & 0 & 1 \\
 0 & 1 & 0 \\
 0 & 0 & 1
\end{array}
\right),\left(
\begin{array}{ccc}
 1 & 0 & 1 \\
 0 & 1 & 0 \\
 0 & 0 & 1
\end{array}
\right),-1+\theta ^2,\theta \right\}\)

\noindent\(\{1.09074,1.2194\}\)

\noindent\(\left\{\left(
\begin{array}{ccc}
 1 & 0 & 0 \\
 0 & 1 & 1 \\
 0 & 0 & 1
\end{array}
\right),\left(
\begin{array}{ccc}
 1 & 0 & 1 \\
 0 & 1 & 1 \\
 0 & 0 & 1
\end{array}
\right),-1+\theta ^2,-1+\theta \right\}\)

\noindent\(\{1.2194,1.83997\}\)

\noindent\(\left\{\left(
\begin{array}{ccc}
 1 & 1 & 0 \\
 0 & 1 & 0 \\
 0 & 0 & 1
\end{array}
\right),\left(
\begin{array}{ccc}
 1 & 1 & 1 \\
 0 & 1 & 1 \\
 0 & 0 & 1
\end{array}
\right),-\theta +\theta ^2,-1+\theta \right\}\)

\noindent\(\{1.83997,2.1304\}\)

\noindent\(\left\{\left(
\begin{array}{ccc}
 -1 & 0 & 0 \\
 0 & 0 & -1 \\
 0 & -1 & 0
\end{array}
\right),\left(
\begin{array}{ccc}
 -1 & -1 & -1 \\
 0 & -1 & -1 \\
 0 & -1 & 0
\end{array}
\right),\theta ,1+\theta +\theta ^2\right\}\)

\noindent\(\{2.05068,2.67734\}\)

\noindent\(\left\{\left(
\begin{array}{ccc}
 0 & -1 & 0 \\
 -1 & 0 & 0 \\
 0 & 0 & -1
\end{array}
\right),\left(
\begin{array}{ccc}
 1 & 1 & 1 \\
 1 & 0 & 1 \\
 1 & 0 & 0
\end{array}
\right),1+\theta +\theta ^2,\theta \right\}\)

\noindent\(\{2.05068,3.09429\}\)

\noindent\(\left\{\left(
\begin{array}{ccc}
 1 & 1 & 1 \\
 0 & 1 & 0 \\
 0 & 0 & 1
\end{array}
\right),\left(
\begin{array}{ccc}
 1 & 2 & 2 \\
 1 & 1 & 2 \\
 1 & 1 & 1
\end{array}
\right),\theta ^2,\theta \right\}\) :~period start

\noindent\(\{3.09429,8.23114\}\)

\noindent\(\left\{\left(
\begin{array}{ccc}
 1 & 0 & 1 \\
 0 & 1 & 0 \\
 0 & 0 & 1
\end{array}
\right),\left(
\begin{array}{ccc}
 1 & 2 & 3 \\
 1 & 1 & 3 \\
 1 & 1 & 2
\end{array}
\right),-1+\theta ^2,\theta \right\}\)

\noindent\(\{8.23114,9.20206\}\)

\noindent\(\left\{\left(
\begin{array}{ccc}
 1 & 0 & 0 \\
 0 & 1 & 1 \\
 0 & 0 & 1
\end{array}
\right),\left(
\begin{array}{ccc}
 1 & 2 & 5 \\
 1 & 1 & 4 \\
 1 & 1 & 3
\end{array}
\right),-1+\theta ^2,-1+\theta \right\}\)

\noindent\(\{9.20206,13.8851\}\)

\noindent\(\left\{\left(
\begin{array}{ccc}
 1 & 1 & 0 \\
 0 & 1 & 0 \\
 0 & 0 & 1
\end{array}
\right),\left(
\begin{array}{ccc}
 1 & 3 & 5 \\
 1 & 2 & 4 \\
 1 & 2 & 3
\end{array}
\right),-\theta +\theta ^2,-1+\theta \right\}\)

\noindent\(\{13.8851,16.0768\}\)

\noindent\(\left\{\left(
\begin{array}{ccc}
 -1 & 0 & 0 \\
 0 & 0 & -1 \\
 0 & -1 & 0
\end{array}
\right),\left(
\begin{array}{ccc}
 -1 & -5 & -3 \\
 -1 & -4 & -2 \\
 -1 & -3 & -2
\end{array}
\right),\theta ,1+\theta +\theta ^2\right\}\)

\noindent\(\{15.4751,20.2042\}\)

\noindent\(\left\{\left(
\begin{array}{ccc}
 0 & -1 & 0 \\
 -1 & 0 & 0 \\
 0 & 0 & -1
\end{array}
\right),\left(
\begin{array}{ccc}
 5 & 1 & 3 \\
 4 & 1 & 2 \\
 3 & 1 & 2
\end{array}
\right),1+\theta +\theta ^2,\theta \right\}\)

\noindent\(\{15.4751,23.3506\}\)

\noindent\(\left\{\left(
\begin{array}{ccc}
 1 & 1 & 1 \\
 0 & 1 & 0 \\
 0 & 0 & 1
\end{array}
\right),\left(
\begin{array}{ccc}
 5 & 6 & 8 \\
 4 & 5 & 6 \\
 3 & 4 & 5
\end{array}
\right),\theta ^2,\theta \right\}\) :~period start

\noindent\(\{23.3506,62.1152\}\)

\noindent\(\left\{\left(
\begin{array}{ccc}
 1 & 0 & 1 \\
 0 & 1 & 0 \\
 0 & 0 & 1
\end{array}
\right),\left(
\begin{array}{ccc}
 5 & 6 & 13 \\
 4 & 5 & 10 \\
 3 & 4 & 8
\end{array}
\right),-1+\theta ^2,\theta \right\}\)

\noindent\(\{62.1152,69.4421\}\)

\noindent\(\left\{\left(
\begin{array}{ccc}
 1 & 0 & 0 \\
 0 & 1 & 1 \\
 0 & 0 & 1
\end{array}
\right),\left(
\begin{array}{ccc}
 5 & 6 & 19 \\
 4 & 5 & 15 \\
 3 & 4 & 12
\end{array}
\right),-1+\theta ^2,-1+\theta \right\}\)

\noindent\(\{69.4421,104.782\}\)

\noindent\(\left\{\left(
\begin{array}{ccc}
 1 & 1 & 0 \\
 0 & 1 & 0 \\
 0 & 0 & 1
\end{array}
\right),\left(
\begin{array}{ccc}
 5 & 11 & 19 \\
 4 & 9 & 15 \\
 3 & 7 & 12
\end{array}
\right),-\theta +\theta ^2,-1+\theta \right\}\)

~\\
\noindent
$u=\varphi (
\left(
\begin{array}{ccc}
 1 & 2 & 2 \\
 1 & 1 & 2 \\
 1 & 1 & 1
\end{array}
\right)
) =1+\theta+\theta ^2$ is a (fundamental) norm one unit of $\mathbb{Z}[2^{1/3}]$.

\noindent\underline{$d=3$}

\noindent\(\left\{preparation; \left(
\begin{array}{ccc}
 1 & 0 & 1 \\
 0 & 1 & 0 \\
 0 & 0 & 1
\end{array}
\right),-1+\theta ^2,\theta \right\}\) :~period start

\noindent\(\{0.987248,1.13841\}\)

\noindent\(\left\{\left(
\begin{array}{ccc}
 1 & 0 & 0 \\
 0 & 1 & 1 \\
 0 & 0 & 1
\end{array}
\right),\left(
\begin{array}{ccc}
 1 & 0 & 1 \\
 0 & 1 & 1 \\
 0 & 0 & 1
\end{array}
\right),-1+\theta ^2,-1+\theta \right\}\)

\noindent\(\{1.13841,1.48185\}\)

\noindent\(\left\{\left(
\begin{array}{ccc}
 1 & 1 & 0 \\
 0 & 1 & 0 \\
 0 & 0 & 1
\end{array}
\right),\left(
\begin{array}{ccc}
 1 & 1 & 1 \\
 0 & 1 & 1 \\
 0 & 0 & 1
\end{array}
\right),-\theta +\theta ^2,-1+\theta \right\}\)

\noindent\(\{1.48185,2.43748\}\)

\noindent\(\left\{\left(
\begin{array}{ccc}
 1 & 0 & 1 \\
 0 & 1 & 0 \\
 0 & 0 & 1
\end{array}
\right),\left(
\begin{array}{ccc}
 1 & 1 & 2 \\
 0 & 1 & 1 \\
 0 & 0 & 1
\end{array}
\right),-1-\theta +\theta ^2,-1+\theta \right\}\)

\noindent\(\{2.43748,2.60848\}\)

\noindent\(\left\{\left(
\begin{array}{ccc}
 -1 & 0 & 0 \\
 0 & 0 & -1 \\
 0 & -1 & 0
\end{array}
\right),\left(
\begin{array}{ccc}
 -1 & -2 & -1 \\
 0 & -1 & -1 \\
 0 & -1 & 0
\end{array}
\right),-\frac{1}{2}+\frac{\theta }{2}-\frac{\theta ^2}{2},\frac{1}{2}+\frac{\theta }{2}+\frac{\theta ^2}{2}\right\}\)

\noindent\(\{2.19262,3.40016\}\)

\noindent\(\left\{\left(
\begin{array}{ccc}
 1 & 0 & -1 \\
 0 & 1 & 1 \\
 0 & 0 & 1
\end{array}
\right),\left(
\begin{array}{ccc}
 -1 & -2 & -2 \\
 0 & -1 & -2 \\
 0 & -1 & -1
\end{array}
\right),\frac{1}{2}+\frac{\theta }{2}-\frac{\theta ^2}{2},-\frac{1}{2}+\frac{\theta }{2}+\frac{\theta ^2}{2}\right\}\)

\noindent\(\{3.40016,4.37887\}\)

\noindent\(\left\{\left(
\begin{array}{ccc}
 0 & -1 & 0 \\
 -1 & 0 & 0 \\
 0 & 0 & -1
\end{array}
\right),\left(
\begin{array}{ccc}
 2 & 1 & 2 \\
 1 & 0 & 2 \\
 1 & 0 & 1
\end{array}
\right),-\frac{1}{2}+\frac{\theta }{2}+\frac{\theta ^2}{2},\frac{1}{2}+\frac{\theta }{2}-\frac{\theta ^2}{2}\right\}\)

\noindent\(\{3.40016,6.48864\}\)

\noindent\(\left\{\left(
\begin{array}{ccc}
 1 & -1 & 1 \\
 0 & 1 & 0 \\
 0 & 0 & 1
\end{array}
\right),\left(
\begin{array}{ccc}
 2 & -1 & 4 \\
 1 & -1 & 3 \\
 1 & -1 & 2
\end{array}
\right),-1+\theta ,\frac{1}{2}+\frac{\theta }{2}-\frac{\theta ^2}{2}\right\}\)

\noindent\(\{6.48864,8.05221\}\)

\noindent\(\left\{\left(
\begin{array}{ccc}
 -1 & 0 & 0 \\
 0 & 0 & -1 \\
 0 & -1 & 0
\end{array}
\right),\left(
\begin{array}{ccc}
 -2 & -4 & 1 \\
 -1 & -3 & 1 \\
 -1 & -2 & 1
\end{array}
\right),1+\theta ,2+\theta +\theta ^2\right\}\)

\noindent\(\{6.48864,10.0621\}\)

\noindent\(\left\{\left(
\begin{array}{ccc}
 1 & 0 & 0 \\
 0 & 1 & 1 \\
 0 & 0 & 1
\end{array}
\right),\left(
\begin{array}{ccc}
 -2 & -4 & -3 \\
 -1 & -3 & -2 \\
 -1 & -2 & -1
\end{array}
\right),1+\theta ,1+\theta +\theta ^2\right\}\)

\noindent\(\{10.0621,13.178\}\)

\noindent\(\left\{\left(
\begin{array}{ccc}
 0 & -1 & 0 \\
 -1 & 0 & 0 \\
 0 & 0 & -1
\end{array}
\right),\left(
\begin{array}{ccc}
 4 & 2 & 3 \\
 3 & 1 & 2 \\
 2 & 1 & 1
\end{array}
\right),1+\theta +\theta ^2,1+\theta \right\}\)

\noindent\(\{10.0621,14.8884\}\)

\noindent\(\left\{\left(
\begin{array}{ccc}
 1 & 0 & 0 \\
 0 & 1 & 1 \\
 0 & 0 & 1
\end{array}
\right),\left(
\begin{array}{ccc}
 4 & 2 & 5 \\
 3 & 1 & 3 \\
 2 & 1 & 2
\end{array}
\right),1+\theta +\theta ^2,\theta \right\}\)

\noindent\(\{14.8884,19.3801\}\)

\noindent\(\left\{\left(
\begin{array}{ccc}
 1 & 1 & 0 \\
 0 & 1 & 0 \\
 0 & 0 & 1
\end{array}
\right),\left(
\begin{array}{ccc}
 4 & 6 & 5 \\
 3 & 4 & 3 \\
 2 & 3 & 2
\end{array}
\right),1+\theta ^2,\theta \right\}\)

\noindent\(\{19.3801,20.1874\}\)

\noindent\(\left\{\left(
\begin{array}{ccc}
 1 & 0 & 1 \\
 0 & 1 & 0 \\
 0 & 0 & 1
\end{array}
\right),\left(
\begin{array}{ccc}
 4 & 6 & 9 \\
 3 & 4 & 6 \\
 2 & 3 & 4
\end{array}
\right),\theta ^2,\theta \right\}\)

\noindent\(\{20.1874,43.5621\}\)

\noindent\(\left\{\left(
\begin{array}{ccc}
 1 & 0 & 1 \\
 0 & 1 & 0 \\
 0 & 0 & 1
\end{array}
\right),\left(
\begin{array}{ccc}
 4 & 6 & 13 \\
 3 & 4 & 9 \\
 2 & 3 & 6
\end{array}
\right),-1+\theta ^2,\theta \right\}\) :~period start

~\\
\noindent
$u=\varphi (
\left(
\begin{array}{ccc}
 4 & 6 & 13 \\
 3 & 4 & 9 \\
 2 & 3 & 6
\end{array}
\right)
\left(
\begin{array}{ccc}
 1 & 0 & 1 \\
 0 & 1& 0 \\
 0 & 0 & 1
\end{array}
\right)^{-1}
) = 4+3\theta+2\theta ^2$ is a (fundamental) norm one unit of $\mathbb{Z}[3^{1/3}]$.

}
\end{multicols}

\clearpage

\paragraph{Totally imaginary quartic field $ \mathbb{Q}((-d)^{1/4})$}~

 Let $d$ be a positive quartic free integer. Let $F$ be the totally imaginary quartic field $\mathbb{Q}((-d)^{1/4})$. We consider the Heegner object associated to $w ={}^t\!((-d)^{3/4}, (-d)^{1/2}, (-d)^{1/4}, 1)$, that is, 
 $$\varpi : \mathbb{R}_{>0} \rightarrow \mathfrak{h}^3\  ;\  t \mapsto 
\left[
\begin{array}{cccc}
 -\frac{d^{3/4}}{\sqrt{2}} & \frac{d^{3/4}}{\sqrt{2}} & \frac{d^{3/4} t}{\sqrt{2}} & -\frac{d^{3/4} t}{\sqrt{2}} \\
 \sqrt{d} & 0 & \sqrt{d} t & 0 \\
 -\frac{d^{1/4}}{\sqrt{2}} & -\frac{d^{1/4}}{\sqrt{2}} & \frac{d^{1/4} t}{\sqrt{2}} & \frac{d^{1/4} t}{\sqrt{2}} \\
 0 & 1 & 0 & t
\end{array}
\right] .$$ 
 In the following, we give some examples of the sequence $\{A_k,B_k,[B_k^{-1}w]\}_k$ and $\{s_k , t_k\}_k$ obtained by Algorithm 1 (forward loop) applied to $\varpi(t)$. Here $[v] = (v_1/v_4,v_2/v_4, v_3/v_4)$ for $v={}^t\!(v_1, v_2, v_3, v_4) \in \mathbb{R}^4$. Note that by Corollary \ref{corunitperiod}, $\varpi_k(t)=\varpi_l(\rho t)$ holds for some $\rho \in \mathbb{R}_{>0}$ if and only if $[B_k^{-1}w] = [B_l^{-1}w]$. We write approximate values for $s_k,t_k$. Set $\theta:=(-d)^{1/4}$.
~\\

{\tiny
\noindent\underline{$d=2$}

\noindent\(\left\{preparation; \left(
\begin{array}{cccc}
 1 & 0 & 0 & 0 \\
 0 & 1 & 0 & 0 \\
 0 & 0 & 1 & 0 \\
 0 & 0 & 0 & 1
\end{array}
\right),\theta ^3,\theta ^2,\theta \right\}\) :~period start

\noindent\(\{0.638754,1.56555\}\)

\noindent\(\left\{\left(
\begin{array}{cccc}
 1 & 0 & 0 & -1 \\
 0 & 1 & 0 & 0 \\
 0 & 0 & 1 & 0 \\
 0 & 0 & 0 & 1
\end{array}
\right),\left(
\begin{array}{cccc}
 1 & 0 & 0 & -1 \\
 0 & 1 & 0 & 0 \\
 0 & 0 & 1 & 0 \\
 0 & 0 & 0 & 1
\end{array}
\right),1+\theta ^3,\theta ^2,\theta \right\}\)

\noindent\(\{1.56555,1.76879\}\)

\noindent\(\left\{\left(
\begin{array}{cccc}
 1 & 0 & 0 & 0 \\
 0 & 1 & 1 & 0 \\
 0 & 0 & 1 & 0 \\
 0 & 0 & 0 & 1
\end{array}
\right),\left(
\begin{array}{cccc}
 1 & 0 & 0 & -1 \\
 0 & 1 & 1 & 0 \\
 0 & 0 & 1 & 0 \\
 0 & 0 & 0 & 1
\end{array}
\right),1+\theta ^3,-\theta +\theta ^2,\theta \right\}\)

\noindent\(\{1.76879,1.98329\}\)

\noindent\(\left\{\left(
\begin{array}{cccc}
 1 & 1 & 1 & 0 \\
 0 & 1 & 0 & -1 \\
 0 & 0 & 1 & 1 \\
 0 & 0 & 0 & 1
\end{array}
\right),\left(
\begin{array}{cccc}
 1 & 1 & 1 & -1 \\
 0 & 1 & 1 & 0 \\
 0 & 0 & 1 & 1 \\
 0 & 0 & 0 & 1
\end{array}
\right),1-\theta ^2+\theta ^3,1-\theta +\theta ^2,-1+\theta \right\}\)

\noindent\(\{1.98329,5.09586\}\)

\noindent\(\left\{\left(
\begin{array}{cccc}
 0 & 0 & 1 & 0 \\
 0 & 0 & 0 & 1 \\
 -1 & 0 & 0 & 0 \\
 0 & -1 & 0 & 0
\end{array}
\right),\left(
\begin{array}{cccc}
 -1 & 1 & 1 & 1 \\
 -1 & 0 & 0 & 1 \\
 -1 & -1 & 0 & 0 \\
 0 & -1 & 0 & 0
\end{array}
\right),-\frac{1}{3}-\frac{2 \theta }{3}-\frac{\theta ^2}{3}+\frac{\theta ^3}{3},-\frac{1}{3}+\frac{\theta }{3}+\frac{2 \theta ^2}{3}+\frac{\theta
^3}{3},-\frac{1}{3}+\frac{\theta }{3}-\frac{\theta ^2}{3}+\frac{\theta ^3}{3}\right\}\)

\noindent\(\{2.90629,5.39623\}\)

\noindent\(\left\{\left(
\begin{array}{cccc}
 0 & 1 & 0 & 0 \\
 -1 & 0 & 0 & 0 \\
 0 & 0 & 0 & 1 \\
 0 & 0 & -1 & 0
\end{array}
\right),\left(
\begin{array}{cccc}
 -1 & -1 & -1 & 1 \\
 0 & -1 & -1 & 0 \\
 1 & -1 & 0 & 0 \\
 1 & 0 & 0 & 0
\end{array}
\right),-1+\theta ^2+\theta ^3,1+\theta +\theta ^2,1+\theta \right\}\)

\noindent\(\{2.27056,5.83395\}\)

\noindent\(\left\{\left(
\begin{array}{cccc}
 1 & 1 & -1 & 0 \\
 0 & 1 & 0 & 1 \\
 0 & 0 & 1 & 1 \\
 0 & 0 & 0 & 1
\end{array}
\right),\left(
\begin{array}{cccc}
 -1 & -2 & 0 & -1 \\
 0 & -1 & -1 & -2 \\
 1 & 0 & -1 & -1 \\
 1 & 1 & -1 & 0
\end{array}
\right),-1+\theta ^3,\theta +\theta ^2,\theta \right\}\)

\noindent\(\{5.83395,6.54145\}\)

\noindent\(\left\{\left(
\begin{array}{cccc}
 1 & 0 & 0 & 0 \\
 0 & 1 & 1 & 0 \\
 0 & 0 & 1 & 0 \\
 0 & 0 & 0 & 1
\end{array}
\right),\left(
\begin{array}{cccc}
 -1 & -2 & -2 & -1 \\
 0 & -1 & -2 & -2 \\
 1 & 0 & -1 & -1 \\
 1 & 1 & 0 & 0
\end{array}
\right),-1+\theta ^3,\theta ^2,\theta \right\}\)

\noindent\(\{6.54145,7.39065\}\)

\noindent\(\left\{\left(
\begin{array}{cccc}
 1 & 0 & 0 & -1 \\
 0 & 1 & 0 & 0 \\
 0 & 0 & 1 & 0 \\
 0 & 0 & 0 & 1
\end{array}
\right),\left(
\begin{array}{cccc}
 -1 & -2 & -2 & 0 \\
 0 & -1 & -2 & -2 \\
 1 & 0 & -1 & -2 \\
 1 & 1 & 0 & -1
\end{array}
\right),\theta ^3,\theta ^2,\theta \right\}\) :~period start

\noindent\(\{7.39065,18.1141\}\)

\noindent\(\left\{\left(
\begin{array}{cccc}
 1 & 0 & 0 & -1 \\
 0 & 1 & 0 & 0 \\
 0 & 0 & 1 & 0 \\
 0 & 0 & 0 & 1
\end{array}
\right),\left(
\begin{array}{cccc}
 -1 & -2 & -2 & 1 \\
 0 & -1 & -2 & -2 \\
 1 & 0 & -1 & -3 \\
 1 & 1 & 0 & -2
\end{array}
\right),1+\theta ^3,\theta ^2,\theta \right\}\)

\noindent\(\{18.1141,20.4656\}\)

\noindent\(\left\{\left(
\begin{array}{cccc}
 1 & 0 & 0 & 0 \\
 0 & 1 & 1 & 0 \\
 0 & 0 & 1 & 0 \\
 0 & 0 & 0 & 1
\end{array}
\right),\left(
\begin{array}{cccc}
 -1 & -2 & -4 & 1 \\
 0 & -1 & -3 & -2 \\
 1 & 0 & -1 & -3 \\
 1 & 1 & 1 & -2
\end{array}
\right),1+\theta ^3,-\theta +\theta ^2,\theta \right\}\)

\noindent\(\{20.4656,22.9475\}\)

\noindent\(\left\{\left(
\begin{array}{cccc}
 1 & 1 & 1 & 0 \\
 0 & 1 & 0 & -1 \\
 0 & 0 & 1 & 1 \\
 0 & 0 & 0 & 1
\end{array}
\right),\left(
\begin{array}{cccc}
 -1 & -3 & -5 & -1 \\
 0 & -1 & -3 & -4 \\
 1 & 1 & 0 & -4 \\
 1 & 2 & 2 & -2
\end{array}
\right),1-\theta ^2+\theta ^3,1-\theta +\theta ^2,-1+\theta \right\}\)
}

~\\
From the period we get a (fundamental) norm one unit $u=-1+\theta ^2+\theta ^3$.
\clearpage

\paragraph{$\chi$-component of $\mathbb{Q}(d^{1/4})/\mathbb{Q}(\sqrt{d})$}~

 Let $d>1$ be a positive quartic free integer. Let $F= \mathbb{Q}(d^{1/4})$ and let $F'=\mathbb{Q}(\sqrt{d})$. We consider the $\chi$-Heegner object  for the nontrivial character $\chi$ of $\Gal(F/F')$ associated to $w ={}^t\!(d^{3/4}, d^{1/2}, d^{1/4}, 1)$, that is, 
 $$\varpi : \mathbb{R}_{>0} \rightarrow \mathfrak{h}^3\  ;\  t \mapsto 
\left[
\begin{array}{cccc}
 -d^{3/4} t & 0 & -d^{3/4} & d^{3/4} t^2 \\
 0 & -\sqrt{d} t & \sqrt{d} & \sqrt{d} t^2 \\
 d^{1/4} t & 0 & -d^{1/4} & d^{1/4} t^2 \\
 0 & t & 1 & t^2
\end{array}
\right] .$$ 
 In the following, we give some examples of the sequence $\{A_k,B_k,[B_k^{-1}w]\}_k$ and $\{s_k , t_k\}_k$ obtained by Algorithm 1 (forward loop) applied to $\varpi(t)$. Here $[v] = (v_1/v_4,v_2/v_4,v_3/v_4)$ for $v={}^t\!(v_1, v_2, v_3,v_4) \in \mathbb{R}^4$. Note that by Corollary \ref{corunitperiod} and Proposition \ref{chiunitperiod}, $\varpi_k(t)=\varpi_l(\rho t)$ holds for some $\rho \in \mathbb{R}_{>0}$ if and only if $[B_k^{-1}w] = [B_l^{-1}w]$ and $N_{F/F'}(\varphi (B_{l}B_{k}^{-1}))=1$. We write approximate values for $s_k,t_k$. Set $\theta := d^{1/4}$ 
~\\

{\tiny
\noindent\underline{$d=2$}

\noindent\(\left\{preparation; \left(
\begin{array}{cccc}
 1 & 0 & 0 & 0 \\
 0 & 1 & 0 & 0 \\
 0 & 0 & 1 & 0 \\
 0 & 0 & 0 & 1
\end{array}
\right),\theta ^3,\theta ^2,\theta \right\}\) :~period start

\noindent\(\{0.858498,1.16483\}\)

\noindent\(\left\{\left(
\begin{array}{cccc}
 1 & 0 & 0 & 0 \\
 0 & 1 & 0 & 1 \\
 0 & 0 & 1 & 0 \\
 0 & 0 & 0 & 1
\end{array}
\right),\left(
\begin{array}{cccc}
 1 & 0 & 0 & 0 \\
 0 & 1 & 0 & 1 \\
 0 & 0 & 1 & 0 \\
 0 & 0 & 0 & 1
\end{array}
\right),\theta ^3,-1+\theta ^2,\theta \right\}\)

\noindent\(\{1.16483,1.25962\}\)

\noindent\(\left\{\left(
\begin{array}{cccc}
 1 & 0 & 0 & 1 \\
 0 & 1 & 0 & 0 \\
 0 & 0 & 1 & 0 \\
 0 & 0 & 0 & 1
\end{array}
\right),\left(
\begin{array}{cccc}
 1 & 0 & 0 & 1 \\
 0 & 1 & 0 & 1 \\
 0 & 0 & 1 & 0 \\
 0 & 0 & 0 & 1
\end{array}
\right),-1+\theta ^3,-1+\theta ^2,\theta \right\}\)

\noindent\(\{1.25962,1.40348\}\)

\noindent\(\left\{\left(
\begin{array}{cccc}
 1 & 0 & 0 & 0 \\
 0 & 1 & 0 & 0 \\
 0 & 0 & 1 & 1 \\
 0 & 0 & 0 & 1
\end{array}
\right),\left(
\begin{array}{cccc}
 1 & 0 & 0 & 1 \\
 0 & 1 & 0 & 1 \\
 0 & 0 & 1 & 1 \\
 0 & 0 & 0 & 1
\end{array}
\right),-1+\theta ^3,-1+\theta ^2,-1+\theta \right\}\)

\noindent\(\{1.40348,1.64805\}\)

\noindent\(\left\{\left(
\begin{array}{cccc}
 1 & 0 & 0 & 0 \\
 0 & 1 & 1 & 0 \\
 0 & 0 & 1 & 0 \\
 0 & 0 & 0 & 1
\end{array}
\right),\left(
\begin{array}{cccc}
 1 & 0 & 0 & 1 \\
 0 & 1 & 1 & 1 \\
 0 & 0 & 1 & 1 \\
 0 & 0 & 0 & 1
\end{array}
\right),-1+\theta ^3,-\theta +\theta ^2,-1+\theta \right\}\)

\noindent\(\{1.64805,1.89353\}\)

\noindent\(\left\{\left(
\begin{array}{cccc}
 1 & 1 & 1 & 0 \\
 0 & 1 & 0 & 0 \\
 0 & 0 & 1 & 0 \\
 0 & 0 & 0 & 1
\end{array}
\right),\left(
\begin{array}{cccc}
 1 & 1 & 1 & 1 \\
 0 & 1 & 1 & 1 \\
 0 & 0 & 1 & 1 \\
 0 & 0 & 0 & 1
\end{array}
\right),-\theta ^2+\theta ^3,-\theta +\theta ^2,-1+\theta \right\}\)

\noindent\(\{1.89353,1.953\}\)

\noindent\(\left\{\left(
\begin{array}{cccc}
 1 & 0 & 0 & 0 \\
 0 & -1 & 0 & 0 \\
 0 & 0 & 0 & -1 \\
 0 & 0 & -1 & 0
\end{array}
\right),\left(
\begin{array}{cccc}
 1 & -1 & -1 & -1 \\
 0 & -1 & -1 & -1 \\
 0 & 0 & -1 & -1 \\
 0 & 0 & -1 & 0
\end{array}
\right),-\theta ^2,\theta ,1+\theta +\theta ^2+\theta ^3\right\}\)

\noindent\(\{1.89353,2.21428\}\)

\noindent\(\left\{\left(
\begin{array}{cccc}
 1 & 0 & 0 & 1 \\
 0 & 1 & 0 & 0 \\
 0 & 0 & 1 & 0 \\
 0 & 0 & 0 & 1
\end{array}
\right),\left(
\begin{array}{cccc}
 1 & -1 & -1 & 0 \\
 0 & -1 & -1 & -1 \\
 0 & 0 & -1 & -1 \\
 0 & 0 & -1 & 0
\end{array}
\right),-1-\theta ^2,\theta ,1+\theta +\theta ^2+\theta ^3\right\}\)

\noindent\(\{2.21428,2.28296\}\)

\noindent\(\left\{\left(
\begin{array}{cccc}
 1 & 0 & 0 & 0 \\
 0 & 0 & -1 & 0 \\
 0 & -1 & 0 & 0 \\
 0 & 0 & 0 & -1
\end{array}
\right),\left(
\begin{array}{cccc}
 1 & 1 & 1 & 0 \\
 0 & 1 & 1 & 1 \\
 0 & 1 & 0 & 1 \\
 0 & 1 & 0 & 0
\end{array}
\right),1+\theta ^2,1+\theta +\theta ^2+\theta ^3,\theta \right\}\)

\noindent\(\{2.21428,2.43997\}\)

\noindent\(\left\{\left(
\begin{array}{cccc}
 0 & -1 & 0 & 0 \\
 1 & 0 & 0 & 0 \\
 0 & 0 & -1 & 0 \\
 0 & 0 & 0 & -1
\end{array}
\right),\left(
\begin{array}{cccc}
 1 & -1 & -1 & 0 \\
 1 & 0 & -1 & -1 \\
 1 & 0 & 0 & -1 \\
 1 & 0 & 0 & 0
\end{array}
\right),-1-\theta -\theta ^2-\theta ^3,1+\theta ^2,\theta \right\}\)

\noindent\(\{2.21428,2.51163\}\)

\noindent\(\left\{\left(
\begin{array}{cccc}
 1 & -1 & 0 & 0 \\
 0 & 1 & 0 & 0 \\
 0 & 0 & 1 & 0 \\
 0 & 0 & 0 & 1
\end{array}
\right),\left(
\begin{array}{cccc}
 1 & -2 & -1 & 0 \\
 1 & -1 & -1 & -1 \\
 1 & -1 & 0 & -1 \\
 1 & -1 & 0 & 0
\end{array}
\right),-\theta -\theta ^3,1+\theta ^2,\theta \right\}\)

\noindent\(\{2.51163,4.60674\}\)

\noindent\(\left\{\left(
\begin{array}{cccc}
 1 & -1 & 0 & 0 \\
 0 & 1 & 0 & 0 \\
 0 & 0 & 1 & 0 \\
 0 & 0 & 0 & 1
\end{array}
\right),\left(
\begin{array}{cccc}
 1 & -3 & -1 & 0 \\
 1 & -2 & -1 & -1 \\
 1 & -2 & 0 & -1 \\
 1 & -2 & 0 & 0
\end{array}
\right),1-\theta +\theta ^2-\theta ^3,1+\theta ^2,\theta \right\}\)

\noindent\(\{4.60674,5.22538\}\)

\noindent\(\left\{\left(
\begin{array}{cccc}
 1 & 0 & 0 & 0 \\
 0 & 1 & 0 & 1 \\
 0 & 0 & 1 & 0 \\
 0 & 0 & 0 & 1
\end{array}
\right),\left(
\begin{array}{cccc}
 1 & -3 & -1 & -3 \\
 1 & -2 & -1 & -3 \\
 1 & -2 & 0 & -3 \\
 1 & -2 & 0 & -2
\end{array}
\right),1-\theta +\theta ^2-\theta ^3,\theta ^2,\theta \right\}\)

\noindent\(\{5.22538,6.03036\}\)

\noindent\(\left\{\left(
\begin{array}{cccc}
 1 & 0 & 0 & 0 \\
 0 & 1 & 1 & -1 \\
 0 & 0 & 1 & 0 \\
 0 & 0 & 0 & 1
\end{array}
\right),\left(
\begin{array}{cccc}
 1 & -3 & -4 & 0 \\
 1 & -2 & -3 & -1 \\
 1 & -2 & -2 & -1 \\
 1 & -2 & -2 & 0
\end{array}
\right),1-\theta +\theta ^2-\theta ^3,1-\theta +\theta ^2,\theta \right\}\)

\noindent\(\{6.03036,6.18809\}\)

\noindent\(\left\{\left(
\begin{array}{cccc}
 0 & 1 & 0 & 0 \\
 -1 & 0 & 0 & 0 \\
 0 & 0 & -1 & 0 \\
 0 & 0 & 0 & -1
\end{array}
\right),\left(
\begin{array}{cccc}
 3 & 1 & 4 & 0 \\
 2 & 1 & 3 & 1 \\
 2 & 1 & 2 & 1 \\
 2 & 1 & 2 & 0
\end{array}
\right),1-\theta +\theta ^2,-1+\theta -\theta ^2+\theta ^3,\theta \right\}\)

\noindent\(\{6.03036,6.87183\}\)

\noindent\(\left\{\left(
\begin{array}{cccc}
 1 & 0 & 0 & 0 \\
 0 & 0 & -1 & 0 \\
 0 & -1 & 0 & 0 \\
 0 & 0 & 0 & -1
\end{array}
\right),\left(
\begin{array}{cccc}
 3 & -4 & -1 & 0 \\
 2 & -3 & -1 & -1 \\
 2 & -2 & -1 & -1 \\
 2 & -2 & -1 & 0
\end{array}
\right),-1+\theta -\theta ^2,\theta ,-1+\theta -\theta ^2+\theta ^3\right\}\)

\noindent\(\{6.1105,7.40949\}\)

\noindent\(\left\{\left(
\begin{array}{cccc}
 1 & 0 & 0 & 0 \\
 0 & 1 & 0 & 1 \\
 0 & 0 & 1 & 0 \\
 0 & 0 & 0 & 1
\end{array}
\right),\left(
\begin{array}{cccc}
 3 & -4 & -1 & -4 \\
 2 & -3 & -1 & -4 \\
 2 & -2 & -1 & -3 \\
 2 & -2 & -1 & -2
\end{array}
\right),-1+\theta -\theta ^2,-1+\theta ,-1+\theta -\theta ^2+\theta ^3\right\}\)

\noindent\(\{7.40949,7.9813\}\)

\noindent\(\left\{\left(
\begin{array}{cccc}
 1 & 0 & 0 & 0 \\
 0 & -1 & 0 & 0 \\
 0 & 0 & 0 & -1 \\
 0 & 0 & -1 & 0
\end{array}
\right),\left(
\begin{array}{cccc}
 3 & 4 & 4 & 1 \\
 2 & 3 & 4 & 1 \\
 2 & 2 & 3 & 1 \\
 2 & 2 & 2 & 1
\end{array}
\right),1+\theta ^3,-1+\theta ^2,1+\theta \right\}\)

\noindent\(\{7.02068,8.24409\}\)

\noindent\(\left\{\left(
\begin{array}{cccc}
 1 & 0 & 0 & 0 \\
 0 & 1 & 0 & 0 \\
 0 & 0 & 1 & 1 \\
 0 & 0 & 0 & 1
\end{array}
\right),\left(
\begin{array}{cccc}
 3 & 4 & 4 & 5 \\
 2 & 3 & 4 & 5 \\
 2 & 2 & 3 & 4 \\
 2 & 2 & 2 & 3
\end{array}
\right),1+\theta ^3,-1+\theta ^2,\theta \right\}\)

\noindent\(\{8.24409,9.18562\}\)

\noindent\(\left\{\left(
\begin{array}{cccc}
 1 & 0 & 0 & 1 \\
 0 & 1 & 0 & 0 \\
 0 & 0 & 1 & 0 \\
 0 & 0 & 0 & 1
\end{array}
\right),\left(
\begin{array}{cccc}
 3 & 4 & 4 & 8 \\
 2 & 3 & 4 & 7 \\
 2 & 2 & 3 & 6 \\
 2 & 2 & 2 & 5
\end{array}
\right),\theta ^3,-1+\theta ^2,\theta \right\}\)

\noindent\(\{9.18562,9.93319\}\)

\noindent\(\left\{\left(
\begin{array}{cccc}
 1 & 0 & 0 & 0 \\
 0 & 1 & 0 & -1 \\
 0 & 0 & 1 & 0 \\
 0 & 0 & 0 & 1
\end{array}
\right),\left(
\begin{array}{cccc}
 3 & 4 & 4 & 4 \\
 2 & 3 & 4 & 4 \\
 2 & 2 & 3 & 4 \\
 2 & 2 & 2 & 3
\end{array}
\right),\theta ^3,\theta ^2,\theta \right\}\) :~period start

\noindent\(\{9.93319,13.4775\}\)

\noindent\(\left\{\left(
\begin{array}{cccc}
 1 & 0 & 0 & 0 \\
 0 & 1 & 0 & 1 \\
 0 & 0 & 1 & 0 \\
 0 & 0 & 0 & 1
\end{array}
\right),\left(
\begin{array}{cccc}
 3 & 4 & 4 & 8 \\
 2 & 3 & 4 & 7 \\
 2 & 2 & 3 & 6 \\
 2 & 2 & 2 & 5
\end{array}
\right),\theta ^3,-1+\theta ^2,\theta \right\}\)

\noindent\(\{13.4775,14.5744\}\)

\noindent\(\left\{\left(
\begin{array}{cccc}
 1 & 0 & 0 & 1 \\
 0 & 1 & 0 & 0 \\
 0 & 0 & 1 & 0 \\
 0 & 0 & 0 & 1
\end{array}
\right),\left(
\begin{array}{cccc}
 3 & 4 & 4 & 11 \\
 2 & 3 & 4 & 9 \\
 2 & 2 & 3 & 8 \\
 2 & 2 & 2 & 7
\end{array}
\right),-1+\theta ^3,-1+\theta ^2,\theta \right\}\)

}
~\\
From the period we get a norm one $\chi$-unit $u=3+2\theta+2\theta ^2 +2\theta ^3$.

\subsection{$\{\infty,p\}$-continued fractions}

Let $d$ be a positive square free integer. Let $F=\mathbb{Q}(\sqrt{-d}) \subset \mathbb{C} \simeq \overline{\mathbb{Q}}_p$ and $p$ a prime number which splits completely in $F/\mathbb{Q}$. Here we fix an isomorphism of fields $\mathbb{C} \simeq \overline{\mathbb{Q}}_p$. We use the same notations as in Section \ref{sectp-app}. \\

\noindent\underline{$d=1, p=5$}\\
Suppose $\sqrt{-1} \equiv 2 \mod 5$ under the fixed isomorphism. Then the $\{\infty,p\}$-continued fraction expansion of $\sqrt{-1}$ is as follows.
\begin{eqnarray*}
\sqrt{-1} &=& [\overline{+1;7;0,4,1};] \\
			 &=& 7- \cfrac{25}{4- \cfrac{1}{1- \cfrac{1}{7- \cfrac{25}{4-\cfrac{1}{ \ddots}}}}}
\end{eqnarray*}
where $[-;\overline{\sim}; ]:=[-;\sim ; \sim ; \cdots ]$, and we simplify the continued fraction form by deleting the zero-terms as $- \cfrac{1}{0-\cfrac{1}{a}}=a$. 
Then, $B_1^{-1}= 
\left(
\begin{array}{cc}
  4/5&-3/5    \\
  3/5&4/5
\end{array}
\right)$ gives a fundamental norm one $p$-unit $\epsilon = (3\sqrt{-1} +4)/5$, and $(x,y,k)=(4,3,1)$ gives the solution to $x^2+y^2=5^{2k}$. Furthermore, the identity $(3\sqrt{-1}+4)/(-\sqrt{-1}+2)^2=\sqrt{-1}$ implies ${\rm ord} [(2-\sqrt{-1})] =1$. See Theorem \ref{p-lag2}.\\

\noindent\underline{$d=5, p=3$}\\
Suppose $\sqrt{-5} \equiv 1 \mod 3$ under the fixed isomorphism. Then the $\{\infty,p\}$-continued fraction expansion of $\sqrt{-1}$ is as follows.
\begin{eqnarray*}
\sqrt{-5} &=& [\overline{+1;7;-1,-2,0};] \\
			 &=& -2- \cfrac{9}{-1- \cfrac{1}{-4- \cfrac{9}{-1- \cfrac{1}{-4-\cfrac{9}{ \ddots.}}}}}
\end{eqnarray*}
Here we simplify the continued fraction form by deleting the zero-terms as $- \cfrac{1}{0-\cfrac{1}{a}}=a$.
Then, $B_1^{-1}= 
\left(
\begin{array}{cc}
  -2/3&5/3    \\
  -1/3&-2/3
\end{array}
\right)$ gives a fundamental norm one $p$-unit $\epsilon = (-\sqrt{-5} -2)/3$, and $(x,y,k)=(-2,-1,1)$ gives the solution to $x^2+5y^2=3^{2k}$. Furthermore, ${\rm ord} [(3,1-\sqrt{-5})] =2$ since $\epsilon$ is square free in $\mathbb{Z}[\sqrt{-5}]$. See Theorem \ref{p-lag2}.\\

\noindent\underline{$d=14, p=3$}\\
Suppose $\sqrt{-14} \equiv 2 \mod 3$ under the fixed isomorphism. Then the $\{\infty,p\}$-continued fraction expansion of $\sqrt{-1}$ is as follows.
\begin{eqnarray*}
\sqrt{-14} &=& [\overline{+1;2;0,1,0;+1;1;0,2,0};] \\
			 &=& 2- \cfrac{9}{2- \cfrac{9}{4- \cfrac{9}{2- \cfrac{9}{4-\cfrac{9}{ \ddots.}}}}}
\end{eqnarray*}
Here we simplify the continued fraction form by deleting the zero-terms as $- \cfrac{1}{0-\cfrac{1}{a}}=a$.
Then, $B_2^{-1}= 
\left(
\begin{array}{cc}
  -5/9&28/9    \\
  -2/9&-5/9
\end{array}
\right)$ gives a fundamental norm one $p$-unit $\epsilon = (-2\sqrt{-14} -5)/9$, and $(x,y,k)=(-5,-2,2)$ gives the solution to $x^2+14y^2=3^{2k}$. Furthermore, ${\rm ord} [(3,2-\sqrt{-14})] =4$ since $\epsilon$ is square free in $\mathbb{Z}[\sqrt{-14}]$. See Theorem \ref{p-lag2}.

\clearpage
\subsection*{Acknowledgements}
I would like to express my deepest gratitude to my advisor Professor Takeshi Tsuji for the constant encouragement, valuable suggestions and helpful comments.


 
\end{document}